\documentclass[12pt]{article}

	\usepackage{amsmath, amssymb, amsthm, mathtools}
	\usepackage{hyperref, a4wide, color, graphicx, comment, dsfont}
	\usepackage{xstring, xparse}
	\usepackage[english]{babel}
	\usepackage{verbatim, booktabs, framed}
	\usepackage{autonum}

	\newtheorem{theorem}{Theorem}[section]
	\newtheorem{lemma}{Lemma}[section]
	\newtheorem{algo}{Algorithm}[section]

	\newtheorem{remark}{Remark}[section]
	
	\allowdisplaybreaks
	\newcommand{\transpose}{\top} %transpose symbol
	\newcommand{\tild}[1]{\tilde{#1}} %symbol for tilde
	\newcommand{\R}{\mathbb{R}} %real numbers
	\newcommand{\N}{\mathbb{N}} %natural numbers beginning with 1
	\newcommand{\linearspace}{\mathscr{L}} %symbol for linear/cont space between sets
	\newcommand{\diagmatrix}{\text{diag}} %symbol for diagonal matrix
	\newcommand{\diameter}{\text{diam}} %diameter of a set
	\newcommand{\closure}[1]{\overline{#1}} %closure of a set

        \definecolor{Blue}{rgb}{0,0,1}
        \def\corr#1{#1}% Hide it      
        \def\newcorr#1{#1}% Hide other chnagesit   
\title{{A parallel space-time $p$-adaptive discontinuous Galerkin method for nonlinear acoustics}}
\author{Daniele Corallo, Pascal Lehner, Christian Wieners}
\date{}

\begin{document}
\maketitle
\begin{abstract}
	In this paper, we introduce and analyze a space-time $p$-adaptive discontinuous Galerkin method for nonlinear acoustics. 

	We first present the underlying mathematical model, which is based on a recently derived formulation involving, in particular, only first order in time derivatives. We then propose a space-time discontinuous Galerkin discretization of this model, combining a symmetric Friedrichs systems discretization for symmetric hyperbolic systems with an interior penalty discretization for damping terms. The resulting nonlinear system is solved using Newton’s method.
	
	Next, we present a well-posedness analysis of the discrete problem. The analysis begins with a linearized system, for which stability is shown. Using a fixed point argument, these results are extended to the fully discrete nonlinear system, yielding a priori error estimates in a natural discontinuous Galerkin norm.
	
	Finally, we present numerical experiments demonstrating the parallel solvability of the space-time formulation and the effectiveness of $p$-adaptivity. The results confirm the theoretical convergence rates and show that adaptive refinement can reduce the number of degrees of freedom required to accurately approximate selected goal functionals. Moreover, the experiments demonstrate that the model reproduces characteristic phenomena of nonlinear acoustics, such as harmonic generation, thereby validating the proposed model.
\end{abstract}
%\tableofcontents
%
%
%
%
\newcommand{\G}{G}
\renewcommand{\linearspace}{\mathcal{L}}
\newcommand{\dt}{\partial_{t}}
\newcommand{\dx}[1]{\partial_{x_{#1}}}
\newcommand{\lapl}{\Delta}
\newcommand{\grad}{\nabla}
\newcommand{\divv}{\nabla \cdot}
\newcommand{\divvv}{\nabla \bullet}
\newcommand{\laplvec}{{\Delta}}
\newcommand{\uold}{\tild{u}}
\newcommand{\pold}{p^{\star}}
\newcommand{\vold}{v^{\star}}
\newcommand{\uvar}{u}
\newcommand{\pvar}{p}
\newcommand{\vvar}{\mathbf{v}}
\newcommand{\flux}{{F}}
\newcommand{\noflux}{{G}}
\newcommand{\fluxmat}{B}
\newcommand{\diffusion}{{A}}
\newcommand{\diffcoff}{\mu}
\newcommand{\reaction}{{C}}
\newcommand{\fvar}{f}
\newcommand{\hvar}{h}
\newcommand{\gvar}{g}
\newcommand{\embed}{\hookrightarrow}
\newcommand{\dime}{d}
\newcommand{\ndime}{{m}}
\newcommand{\indexx}{i}
\newcommand{\pdiff}{\mu}
\newcommand{\vdiff}{\eta}
\newcommand{\divcof}{\alpha}
\newcommand{\gradcof}{s}
\newcommand{\macha}{\varepsilon}
\newcommand{\machb}{ \varepsilon \beta}

\newcommand{\domain}{\Omega}
\newcommand{\spacetimedomain}{Q}
\newcommand{\testvar}{\phi}
\newcommand{\triang}{\mathcal{T}}
\newcommand{\trifaces}{\mathcal{F}}
\newcommand{\meshsize}{h}
\newcommand{\timesize}{n}
\newcommand{\starttime}{0}
\newcommand{\finaltime}{T}
\newcommand{\timesteps}{N}
\newcommand{\element}{K}
\newcommand{\face}{F}
\newcommand{\allelements}{\mathcal{K}_{\meshsize}}
\newcommand{\allfaces}{\mathcal{F}}
\newcommand{\allintfaces}{\trifaces^{\textrm{\corr{int}}}_{\meshsize}}
\newcommand{\allextfaces}{\trifaces^{\textrm{\corr{ext}}}_{\meshsize}}
\newcommand{\sobindex}{k}
\newcommand{\dummyvar}{w}
\newcommand{\dummyvarb}{\varphi}
\newcommand{\polynomialspace}{{{\mathcal{P}}}}
\newcommand{\brokenpoly}{S}
\newcommand{\dega}{{p}}
\newcommand{\degb}{{q}}
\newcommand{\inval}{I}
\newcommand{\Lelement}{\Lp{2}(\element)^{\ndime}}
\newcommand{\Lelementboundary}{\Lp{2}( \partial \element)^{\ndime}}
\newcommand{\Melement}{\Lp{2}(\element)^{\ndime \times \dime}}
\DeclarePairedDelimiter\timejump{ \{ }{ \} }
\DeclarePairedDelimiter\average{ \langle }{ \rangle }
\DeclarePairedDelimiter\jump{ [ }{ ] }
\DeclarePairedDelimiter\trace{ \text{Tr}( }{ ) }
\DeclarePairedDelimiter\disnorm{\lvert\!\lvert\!\lvert}{\rvert\!\rvert\!\rvert}

\newcommand{\p}{p}
\newcommand{\pv}{u}
\newcommand{\pvtest}{z}
\newcommand{\pvold}{\tild{\pv}}
\newcommand{\D}{D}
\newcommand{\NL}{N}
\newcommand{\h}{h}
\newcommand{\bilinfull}{b_{\h}}
\newcommand{\trilinspace}{c_{\h}}
\newcommand{\linfull}{\ell_{\h}}
\newcommand{\linbdr}{\ell_{\partial \Omega, \h}}
\newcommand{\nump}{\p_{\h}}
\newcommand{\numv}{v_{\h}}
\newcommand{\numptest}{q_{\h}}
\newcommand{\numvtest}{w_{\h}}
\newcommand{\numpv}{\pv_{\h}}
\newcommand{\numpvtest}{\pvtest_{\h}}
\newcommand{\bilintime}{m_{\h}}
\newcommand{\bilinspace}{c_{\h}}

\newcommand{\numpvold}{\tild{u}_h}
\newcommand{\numM}{M_{\h}}
\newcommand{\numspacetimedomain}{ {Q_{\h} }}
\newcommand{\numspacedomain}{ {\Omega_{\h}}}
\newcommand{\bilinspacesf}{a_{\h}}
\newcommand{\bilinspaceip}{d_{\h}}
\newcommand{\bilinspacenl}{n_{\h}}
\newcommand{\A}{A}
\newcommand{\Aup}{A_{\norvec_\element}^{\textrm{up}}}
\newcommand{\jacobian}{\boldsymbol{\nabla}}
\newcommand{\bilinspaceipbdr}{\rho_{\h}^\iptheta}
\newcommand{\gateau}{\delta}
\newcommand{\M}{M}

\newcommand{\norvec}{\mathbf{n}}
\newcommand{\radius}{\rho}
\newcommand{\ball}{\mathcal{B}}
\newcommand{\half}{\frac{1}{2}}
\newcommand{\Cshape}{C_R}
\newcommand{\Cbddeq}{C_1}
\newcommand{\Cbddeqq}{C_2}
\newcommand{\penality}{J}
\newcommand{\penpara}{\sigma}
\newcommand{\iptheta}{\Theta}
\newcommand{\bilina}{a}
\newcommand{\bilinb}{b}
\newcommand{\bilinc}{c}
\newcommand{\bilind}{d}
\newcommand{\bilinf}{f}
\newcommand{\biling}{g}
\newcommand{\bilinh}{h}
\newcommand{\bilinB}{B}
\newcommand{\linform}{l}
\newcommand{\numflux}{\mathcal{H}}
\newcommand{\error}{e}
\newcommand{\errora}{\xi}
\newcommand{\errorb}{\eta}
\newcommand{\Uvar}{U}
\newcommand{\proje}{\Pi}
%small greek letters for constants
%small latin letters for functions (vector or not)
%big latin letters for matrix (functions)
\newcommand{\Honezero}{H^1_0(\domain)}
\newcommand{\LtwoLtwo}{\Lp{2}([0,T];\Lp{2}(\domain)^\ndime )}
\newcommand{\fluxreg}{k}
\newcommand{\uregularity}{H^1_{\uvar_0}(\inval; \Lp{2}(\domain)^\ndime) \cap \Lp{2}(\inval;\Hs{2}(\domain)^\ndime \cap \Honezero^\ndime) \cap \Lp{\infty}(\inval;\Hs{1}(\domain)^\ndime) }
\newcommand{\vevar}{\phi}
\newcommand{\targetset}{V}
\newcommand{\vregularity}{\Lp{2}(I;\Hs{1}(\domain)^\ndime)}
\newcommand{\Uspace}{U}
\newcommand{\Vspace}{V}
\newcommand{\cylind}{Q}
\newcommand{\timesi}{n}
\newcommand{\discinval}{\mathcal{I}_n}
\newcommand{\spacetime}{\mathcal{R}}
\newcommand{\timeelement}{n}
\newcommand{\spacetimeelement}{R}
\newcommand{\dummyset}{S}
\newcommand{\localproje}{\Psi}
\newcommand{\rhs}{f}
\newcommand{\bilinges}{\vartheta}
\newcommand{\opges}{\varTheta}
\newcommand{\diffbound}{I}
\newcommand{\numfluxform}{H}
\newcommand{\pri}{'}
\newcommand{\opd}{D}
\newcommand{\opc}{C}
\newcommand{\opb}{B}
\newcommand{\opa}{A}
\newcommand{\testspace}{{Z}}
\newcommand{\numtestspace}{{Z}_{\meshsize}}
\newcommand{\seekspace}{{U}}
\newcommand{\numseekspace}{{U}_{\meshsize}}
\newcommand{\den}{\rho}
\newcommand{\sos}{c}
\newcommand{\ent}{s}
\newcommand{\para}{\alpha}
\newcommand{\parb}{\beta}
\newcommand{\parc}{\gamma}
\newcommand{\pard}{\delta}
\newcommand{\pare}{\varepsilon}
\newcommand{\parf}{\zeta}
\newcommand{\parg}{\eta}
\newcommand{\parh}{\theta}
\newcommand{\boundary}{\partial}
\newcommand{\timeint}{I}
\newcommand{\fintime}{T}
\newcommand{\settimes}{\times}
\newcommand{\f}{f}
\newcommand{\Non}{N}
\newcommand{\dir}{\Gamma_{\textrm{\corr{D}}}}
\newcommand{\neu}{\Gamma_{\textrm{\corr{N}}}}
\newcommand{\wrtdt}{\, \corr{\mathrm d}t}
\newcommand{\alldirfaces}{\allfaces^{\textrm{\corr{D}}}}
\newcommand{\allneufaces}{\allfaces^{\textrm{\corr{N}}}}
\newcommand{\ustar}{U_{\star, h}}
\newcommand{\faceset}{\mathcal{F}}
\newcommand{\Rem}{\mathcal{C}_h}
\newcommand{\AD}{\A_D}
\section{Introduction}
%Why is nonlinear acoustics important?
Efficient numerical methods for nonlinear acoustics are essential across various scientific and engineering disciplines. In engineering, nonlinear acoustic phenomena play 
a critical role in nondestructive 
material testing \cite{Fierro2015}, underwater acoustics \cite{CampoValera2023} and supersonic sources \cite{Sapozhnikov2019}. From a medical perspective, 
accurately modeling the propagation of nonlinear acoustic waves through 
biological tissue is essential for both diagnostic and therapeutic applications, 
including medical imaging, tissue ablation, lithotripsy, and targeted drug delivery, see \cite{rudenko_2022}. %We refer to \cite{Sapozhnikov2019} for an overview of the applications of nonlinear acoustics.

%What are the numerical challenges?
The simulation of nonlinear acoustics is often challenging due to the interplay between 
high frequency wave propagation and nonlinear effects. Many classical models, such as Westervelt’s~\cite{Westervelt1963} and Kuznetsov’s equation~\cite{kuznetsov:1971}, involve second order time derivatives that act on nonlinear terms. This significantly complicates the design of stable and efficient numerical methods. Moreover, a key difficulty lies in resolving the nonlinearities in the presence of steep gradients or shock like structures that often emerge during 
wave propagation, see \cite{Hamilton:1997} for more details on the physics of nonlinear acoustics. \\

\clearpage

%What has been done so far?
Numerous numerical methods have been proposed in the literature to address these challenges. Here, we focus on works that include an error analysis of a finite element method.  
For the Westervelt equation, semi-discrete finite element approximations with optimal error 
estimates have been studied in \cite{Nikolic2019}, while semi-discrete discontinuous 
Galerkin (DG) methods have been analyzed in \cite{Antonietti2020}. 
A time stepping DG approach using conforming spatial elements is presented in
\cite{Gomez2024}, and a hybridizable DG formulation is discussed in \cite{Meliani2024}. 
Efficient time integration techniques based on operator splitting are 
introduced in \cite{Kaltenbacher2015}.
For the Kuznetsov equation, a mixed finite element analysis is provided 
in \cite{Meliani2024}. Robust error estimates with respect to the damping parameter for a mixed method 
are derived in \cite{Dörich2024}, and coupling with elastic wave equations is 
addressed in \cite{Muhr2023}. 

A model for nonlinear acoustics using only first order time derivatives with a structure preserving discretization and conforming elements is developed in \cite{Egger2025}, 
with additional insights presented in \cite{Barham2024}.
Other numerical approaches, such as finite volume and finite difference methods, have also been explored, though often without convergence analysis. A finite volume method for Westervelt's equation has been investigated in, for example, \cite{VelascoSegura2015}, while a finite difference method has been studied in \cite{NortonPurrington2009}.
\\

%What are we doing and why?
In this paper, we propose a space-time DG method 
for a nonlinear acoustic model closely related to Kuznetsov's equation, 
derived in \cite{QuadraticWave}. The key distinction of this formulation to classical models is the use of only first order in time derivatives, and that the nonlinear terms only involve spatial derivatives. To the best of our knowledge, this work represents the first application of a space-time DG method to nonlinear acoustics.
The motivation for considering space-time methods lies in their intrinsic suitability for adaptive refinement, parallel 
implementation, and inverse problems. 
Advanced space-time DG methods have been devolved for wave equations in \newcorr{e.g. \cite{AntoniettiMazzieriMigliorini2020, corallo2022, Karabelas2016, langer:2019, bansal:2021}}, for parabolic problems in \cite{CangianiDongGeorgoulis2017, GomezMoiola2024}, and for Navier-Stokes equations in \cite{Klaij2006, RhebergenCockburnVanderVegt2013, TAVELLI2016}. We emphasize that this list of references is by no means exhaustive.
\\

%Discription of section contents
The remainder of the paper is organized as follows. In section \ref{se:model}, we introduce the continuous mathematical model and present the space–time DG discretization analyzed in this work. Section \ref{se:linearized} is devoted to the discrete well-posedness analysis of a linearized system. In section \ref{se:nonlinear}, these results are extended to the fully nonlinear discrete problem, yielding well-posedness and a priori error estimates for the proposed method. Numerical experiments validating the efficiency and possibility of a $p$-adaptive and parallel implementation are presented in section \ref{se:experiments}. Finally, concluding remarks and directions for future research are given in section \ref{se:conclusion}.

\clearpage

\section{First order in time model} \label{se:model}
In this section, we present the continuous model and its space-time DG discretization considered in this work. The derivation of this first order in time model is based on the standard assumption of small amplitude wave propagation and follows a methodology similar to that used in the derivation of Kuznetsov's equation. 
A key difference, however, is that one avoids taking additional derivatives of the reduced Navier–Stokes system, which are employed in the Kuznetsov or Westervelt formulation to further simplify the equations. For a detailed derivation, we refer the reader to \cite{QuadraticWave}.
\subsection{Continuous model}
Let $\domain \subset \R^\dime$ be an open, bounded space domain with dimension $d \in \{2,3\}$, and let $\Gamma := \boundary \domain$ denote the boundary of $\Omega$, which is assumed to be Lipschitz. Furthermore, let $\timeint := (0,\fintime) \subset \R$ denote a time interval with final time $T>0$, and define the corresponding space-time domain $\spacetimedomain:= \timeint \settimes \domain$.
The model is governed by parameters satisfying 
$$\para, \, \parb, \, \parc, \, \pard, \, \pare, \, \parf, \, \parg, \, \parh > 0, \, \parc \neq \pard,$$
and takes the form 
%see \cite[Section 2]{QuadraticWave} for a derivation,
\[ \label{eq:pvmodel}
	\begin{split}
	\para  \dt \p + \divv \vvar - \parb \lapl \p + \parc \p \divv \vvar + \pard \grad \p \cdot \vvar 
	&= \p_{S} \quad \, \text{in} \quad  \spacetimedomain 
	\\
	\pare \dt \vvar + \grad \p - \parf \lapl \vvar + \half \grad \! \left(\parg \vvar^2 - \parh \p^2 \right) 
	&= \vvar_{S}	\quad \, \text{in} \quad \spacetimedomain 
	\\
	\pvar = \p_0, \quad \vvar 
	&= \vvar_0 \quad \,\text{on}\quad \{ 0 \} \settimes \domain 
	\\
	\pvar = \p_{D}, \quad \vvar 
	&= \vvar_{D} \quad\text{on}\quad \timeint \settimes \dir 
	\\
	\parb \grad \pvar \cdot \norvec = p_{N}, \quad \vvar 
	&= \vvar_{N} \quad\text{on}\quad \timeint \settimes \neu
	\end{split}
\]
with $\dir \cup \neu = \Gamma$, where $\dir \cap \neu = \emptyset$, and $\dir$ has positive $(d-1)$-dimensional Lebesgue measure. 
In this model of nonlinear acoustics $\p:\spacetimedomain \to \R$ can be understood as the fluctuation of pressure and $\vvar : \spacetimedomain \to \R^\dime$ as the fluctuation of the velocity field, respectively, cf. \cite[Section 2]{QuadraticWave} for details. By $\norvec$ we denote the outer unit normal vector on $\boundary \domain$ and $\cdot^\transpose$ means transposition of a vector or matrix. We write $\vvar^2:= \vvar \cdot \vvar$ for the squared norm of a vector, $\grad := (\dx{1}, \dots, \dx{d})^\transpose$ for the vector of spatial partial derivatives, $\lapl := \divv \grad$ for the Laplace operator, which acts component wise on vector fields, and $\jacobian$ for the Jacobian matrix of a vector field.

In order to simplify notation, we rewrite equation \eqref{eq:pvmodel} with $\uvar = (u_p, u_\vvar )^\transpose = (\p, \vvar)^\transpose \in \R^{1+\dime}$ as
\[\label{eq:umodel}
\begin{split}
\left(\M \dt + \A - \D \lapl \right) \! \uvar + \Non( \uvar, \uvar) &= \f \, \, \, \, \quad \text{ in } \quad Q \\
\uvar &= u_0 \quad \,  \text{ on } \quad \{ 0 \} \settimes \domain \\
\mathcal{B}_D u &= u_D \quad \text{ on } \quad \timeint \settimes \dir \\
\mathcal{B}_N u &= u_N \quad \text{ on } \quad \timeint \settimes \neu
\end{split}
\]
with matrices $
\M := \diagmatrix(\para, \pare, \dots, \pare), \ \D:=\diagmatrix(\parb, \parf, \dots, \parf) \in \R^{(1 + \dime) \times ( 1 + \dime)} $,
first order differential operators  
\[
\begin{split}
 \A u := \begin{pmatrix}
\divv u_\vvar \\ \grad u_p
 \end{pmatrix}, \quad 
 \Non(\pv, z) := 
 \begin{pmatrix}
\parc z_p \divv \pv_\vvar + \pard \grad \pv_p \cdot z_\vvar \\
\parg \left(\jacobian \pv_\vvar \right)^\transpose \! z_\vvar - \parh z_p \grad \pv_p
 \end{pmatrix},
\end{split}
\]
boundary operators $\mathcal{B}_D u = u, \ 
\mathcal{B}_N u = ( \grad u_p \cdot \norvec, u_\vvar)^\transpose$,
and given data \[ \label{eq:data}
\begin{split} 
 \f&:=(\pvar_{S}, \vvar_{S})^\transpose \in L^2(\spacetimedomain;\R^{1 + \dime}), \\
 u_0 &:= (p_0, \vvar_0)^\transpose \in H^1(\domain;\R^{1+d}), \\
 u_D &:= (p_{D}, \vvar_{D})^\transpose \in H^1(0,T;H^{\frac{3}{2}}(\dir;\R^{1+d}) ), \\
 u_N&:=(p_{N}, \vvar_{N})^\transpose \in H^1(0,T; H^{\half}( \neu) \settimes H^{\frac{3}{2}}(\neu;\R^{d}) ).
\end{split} 
 \]
In \cite{QuadraticWave, QuadraticWave2} it is shown that equation \eqref{eq:umodel} is well posed for any $T>0$ with  
\[
\uvar \in U:=H^1(0,T; L^2(\Omega;\R^{1+\dime})) \cap L^2(0,T;H^2(\Omega;\R^{1+ \dime})) 
\]
assuming that $\dir = \boundary \domain$ or $\neu = \boundary \domain$ is $C^2$-regular, and that the data \eqref{eq:data} are sufficiently small with respect to the corresponding norms.
\corr{Note that in our analysis the case $\neu = \boundary \domain$ is not included.}

This is established as follows. Writing equation \eqref{eq:umodel} as
\[\label{eq:G}
\G(u) := L u - \D \lapl u + \Non(u, u) - f = 0
\]
with $L:= \M \dt + \A$, it is shown in \cite{QuadraticWave, QuadraticWave2} that there exists an $r>0$ such that Newton's method with starting value $0$ converges to a unique $u \in B_r^U(0)$ such that $G(u)=0$. Here, for any normed space $X$, $r>0$ and $x \in X$ we denote the open ball with radius $r$ centered at $x$ as $B^X_r(x)$. 
In the analysis, it becomes necessary to compute the Fréchet derivative of $\G$ in $U$ at $\pvold$. Owing to the bilinearity of $\Non(\cdot,\cdot)$ and the linearity of all other operators in $G$, one can demonstrate through suitable estimates on $\Non(\cdot,\cdot)$, that $\G$ is Fréchet differentiable in $U$. 
Then, Newton's method requires the linear system 
\[ \label{eq:linearcont}
\begin{split}
	(L - \D \lapl ) \pv + \NL(\pv, \pvold) + \NL(\pvold, \pv) &= f^{\pvold} \quad  \, \text{ in } \quad Q\\
	\uvar &= u_0 \quad \, \text{ on } \quad \{ 0 \} \settimes \domain\\
	\mathcal{B}_D u &= u_D \quad  \text{ on } \quad \timeint \settimes \dir \\
	\mathcal{B}_N u &= u_N \quad \text{ on } \quad \timeint \settimes \neu,
\end{split}
\]
to be solved in each iteration, where in the context of Newton's method $\f^{\pvold}$ plays the role of a residual and $\pvold \in U$ denotes the previous iterate. To establish global in time well-posedness of equation \eqref{eq:linearcont}, the data \eqref{eq:data} and $f^{\pvold}$ are assumed to be small enough. Once this is proven, the Newton-Kantorovich Theorem, see Lemma \ref{NewtonKantorowich} below, can be applied to $\G$. This gives global in time well-posedness of equation \eqref{eq:umodel}, if the data \eqref{eq:data} are sufficiently small. For showing the well-posedness of the discrete system, we follow a similar strategy, see section \ref{se:nonlinear}.
\subsection{Space-time discretization}
%In this section, we define the space-time DG method for equation \eqref{eq:umodel} considered in this paper. 
To begin, we introduce some notation. Let $S \subset \R^s$ be Lipschitz with $s \in \N$. We use the abbreviation $L^2=L^2(S)=L^2(S;\R^i)$ with $i \in \N$ when the context is clear. The standard $L^2$ inner product of two functions $f,g \in L^2(S;\R^i)$ is given by $(f, g)_S := \int_S f \cdot g\, d\sigma$, where $\sigma$ is a measure on $S$. If $f,g$ are matrix-valued, $ f\cdot g$ corresponds to the Frobenius inner product. 
%In the notion $\grad$ we do not distinguish between broken or continuous derivative operators.
%\subsubsection{Space-time discretization}
%\subsubsection*{Space-time discretization}
Let $k \in \N$ denote the number of time points of the considered discretization. For given time points $0 =: t_0 < t_1 < \ldots < t_{k-1}< t_k := T$, we define the following time intervals and time step sizes for $j \in \{ 1, \ldots, k\}$.
\[ 
\timeint_{j} := (t_{j-1}, t_j), 
\quad \timeint_h := \bigcup_{j =1}^k I_j \subset \timeint, 
\quad \boundary I_h := \bigcup_{j = 0}^k\{ t_j \}, 
\quad \tau_j := t_j - t_{j-1}, 
\quad \tau := \max_{j \in \{ 1, \dots, k\}}
%_{n \in \{1, 2, \dots, N\} } 
\tau_j . 
\] 
In this paper, we assume a quasi-uniform time discretization, i.e., $\tau_j \in [C_{sr} \tau, \tau]$ for all $j \in \{1, \dots, k \}$ with a shape regularity constant $C_{sr} \in (0,1]$ independent of $k$ and $j$.

For the spatial discretization, let $\allelements$ be a conforming shape-regular mesh of $\domain$ so that
${\domain}_h := \cup_{\element \in \allelements} {\element}$ is a decomposition of $\domain$ into open non overlapping space cells $\element \subset \corr{\Omega} %or \element \in  \allelements 
$. We denote spacial mesh sizes as $\meshsize_\element:= \diameter(\element)$ and $\meshsize := \max_{\element \in \allelements} \meshsize_\element$.

For simplicity, we consider only tensor-product space-time meshes in this article. To obtain a tensor-product mesh of $\spacetimedomain$, we define
\[
\spacetimedomain_h := \timeint_h \settimes \domain_h = \bigcup_{\spacetimeelement \in \spacetime_h} R
\text{ with } \spacetime_h:= \{ R \subset Q : \exists j \in \{1,\ldots,k\}, \exists \element \in \allelements :\spacetimeelement = \timeint_j \settimes K \}, 
\]
such that $\overline{Q_h} = \overline{Q}$.
We assume that the mesh sizes in time and space are well balanced satisfying for some $\omega \in [1,\infty)$
\[ \label{eq:mesh_balance} h^\omega \leq c_{r} \tau \leq h < 1, \] where $c_{r} > 0$ is a reference velocity yielding the bound $\tau^{-1} \leq c_r h^{-\omega}$.
\subsubsection*{Element boundaries}
We denote by $\allfaces_K$ the set of faces of a space element $\element \in \allelements$, and define the set of all faces as $ \allfaces_h := \bigcup_{\element \in \allelements} \allfaces_K$, where $\meshsize_\face = \diameter(\face)$ for $\face \in \allfaces_h$. The space skeleton is then given by $\boundary \domain_h = \closure{\corr{\bigcup}_{\face \in \allfaces_h}  \face }$, and the corresponding space-time skeleton is $\boundary Q_h = \boundary \timeint_h \settimes \closure{\Omega} \cup \closure{I} \settimes \boundary \domain_h$.

We distinguish between boundary faces $\allextfaces:= \{ \face \in \allfaces_h : \face \subset \partial \domain \} $ and interior faces $ \allintfaces := \allfaces_h \setminus \allextfaces$. Additionally, we define Dirichlet faces as $\alldirfaces_h:=  \{ \face \in \allfaces_h : \face \subset \dir \}$ and Neumann faces as $\allneufaces_h:= \{ \face \in \allfaces_h : \face \subset \neu \}$.  We assume that the mesh $\allelements$ is such that $\allextfaces = \alldirfaces_h \cup \allneufaces_h $ disjointly with $\alldirfaces_h \subset \dir$, $\allneufaces_h \subset \neu$ such that $\closure{\bigcup_{\face \in \alldirfaces_h} \face} = \dir$ and $\closure{\bigcup_{\face \in \allneufaces_h} \face} = \neu$. 
%Since we assume that the Lebesgue measure of $\dir$ is non zero, 
Furthermore, we require that the $(d-1)$-dimensional Lebesgue measure of $\alldirfaces_h$ is positive.

If an interior face $F \in \allintfaces$ is contained in $\element \in \allelements$, we write $\element_\face$ for the unique neighbor cell such that $F= \boundary \element \cap \boundary \element_\face$. For an exterior face $F \in \allextfaces \subset \element \in \allelements$, we set $\element_\face = \element$.
Each element $\element \in \allelements$ is assigned the outer normal vector $ \norvec_K$ of $\boundary \element$.
\subsubsection*{Approximation spaces}
Let $i,s \in \N$. We define the space of polynomials $\polynomialspace(\dummyset;\R^i) := \bigcup_{\deg=0}^\infty \polynomialspace^{\deg}(\dummyset;\R^i)$, where $\polynomialspace^{\deg}(\dummyset;\R^{i})$ is the set of polynomials $\dummyvarb: \dummyset \to \R^{i}$ with degree less or equal then $\deg \in \N_0$ for any set $\dummyset \subset \R^s$.
The approximation spaces considered here are broken polynomial spaces, namely, subsets of
\[ 
\polynomialspace_h(\dummyset;\R^{i}) := \left\{ \dummyvarb \in L^2(S;\R^i) : \dummyvarb \vert_{S} \in \polynomialspace(S_h;\R^i) \ \forall S_h \in \mathcal{S}_h  \right\} ,
\]
where $\mathcal{S}_h$ is some mesh of $S$.

To construct space-time tensor product broken polynomial spaces, we assign polynomial degrees $\dega_\spacetimeelement \in \N_0$ in time and $\degb_\spacetimeelement \in \N_0$ in space to every space-time cell $\spacetimeelement \in \spacetime_h$. 
This results in the discontinuous finite element space
\[ \label{eq:def_Zh}
\testspace_{h}:=  \left\{ 
\dummyvarb \in L^2(\spacetimedomain;\R^{1+\dime}) : \dummyvarb \vert_{ R=I_j \times \element} \in \polynomialspace^{\dega_R}(I_j;\R^{1+\dime}) \otimes \polynomialspace^{\degb_R}(\element;\R^{1+\dime}) \ \forall \spacetimeelement \in \spacetime_h 
\right\},
\]
which is used in this paper. Here, $\otimes$ denotes the tensor product between vector spaces.
\subsubsection*{Boundary operators}
Let $\dummyvar \in \left( H^1(I;L^2(\Omega;\R^i)) \cap L^2(I;H^1(\Omega;\R^i)) \right) \cup \polynomialspace_h(Q,\R^i)$ with $i \in \N \cup (\N \times \N)$ be such that traces in time and space of $w$ exist on $\spacetime_h$. For $\element \in \allelements$ and $j \in \{ 1, \ldots, k\}$, we define $\dummyvar_{j} := \dummyvar \vert_{I_j}$ and $\dummyvar_{K}:= \dummyvar \vert_{K}$, where the evaluation of these expressions at boundary points is understood in the sense of traces. In particular, this gives
\[
w_j(t_j) = \lim_{t \to t_j^+ } w_j(t), \quad w_j(t_{j-1}) = \lim_{t \to t_{j-1}^- } w_j(t) ,
\]
if the limits exist.

Let $\element \in \allelements$ be a space cell, $\face \in \faceset_\element \cap  \allintfaces$, $\element_\face$ the neighboring cell. We define the (space) jump at $\face$ with respect to $\element$ as
\[
\jump{\dummyvar}_{\face, \element}(t,x) := \dummyvar_{ \element_\face}(t,x) - \dummyvar_{ \element}(t,x) \quad x \in \face, \ t \in \timeint.
\]
The jump with respect to time is defined for each $j = 1, \dots, k-1$ as
\[
\jump{\dummyvar}_j(x) := \dummyvar_{{j+1}}(t_j,x) - \dummyvar_{{j}}(t_j,x) \quad x \in \domain,
\] 
and for the boundary cases $j=0,k$, we set $\jump{w}_0(\cdot) := w(t_0, \cdot)$, $\jump{w}_k(\cdot) := - w(t_k, \cdot) $.

Additionally, we use the notion of a (space) jump at faces $\face \in \faceset_h$ without a particular reference cell. For this, we define $\norvec_{\face}$ as the unit outer normal vector of $\face$. If $\face \in \allextfaces$, $\norvec_{\face}$ is taken to have the same orientation as $\norvec_{K}$ with $\face \subset K$. Otherwise, the orientation of $\norvec_\face$ may be chosen arbitrarily. Let $\face$ and $\face_1$ be adjacent faces with $\face \subset K, \face_1 \subset K_1$.
The (space) jump at $\face$ is defined as
\[
\jump{\dummyvar}_F(t,x) := \dummyvar_{K}(x,t) - \dummyvar_{K_1}(x,t) \quad x \in \face \cap \face_1, \ t \in \timeint
\]
and the (space) average at $\face$ is
\[
\average{\dummyvar}_F(t,x) := \half \left( \dummyvar_{K}(x,t) + \dummyvar_{K_1}(x,t) \right) \quad x \in \face \cap \face_1, \ t \in \timeint.
\]
On boundary faces $\face \in \allextfaces$, we set $\jump{\dummyvar}_F = \average{\dummyvar}_\face = \dummyvar \vert_F$. 
\subsection{Discontinuous Galerkin scheme}
We now introduce the DG scheme for equations \eqref{eq:umodel} and \eqref{eq:linearcont} considered in this article. DG space-time discretizations of $ \M \dt + \A $ can be framed within the theory of symmetric Friedrichs systems, see, e.g., \cite[Chapter 7]{dipietro2012}. In this context, we build upon the primal formulation of a Friedrichs system and then add terms corresponding to an interior penalty method for the Laplace operator $-\lapl$, see, e.g., \cite{dipietro2012, dolejsi_discontinuous_2015}, along with nonlinear volume terms. %The resulting scheme is similar to DG schemes for advection-convection equations, as described in \cite{dipietro2012}.

For convenience, we do not change the notation of differential operators, when applied to  globally discontinuous functions. In these cases, we mean the broken operator, which is defined piecewise on each (space-time) cell.
\subsubsection*{Symmetric Friedrichs system}
To discretize $\dt \M + \A$, as in \cite{DoerflerFindeisenWienersZiegler:2019}, we consider the following bilinear forms 
defined on $Z_h \times Z_h$, cf. \eqref{eq:def_Zh},
\[ 
\bilintime( \numpv, \numpvtest) :=  \sum_{j=1}^k \left( \int^{t_j}_{t_{j-1}} (\M \dt \numpv, \numpvtest )_{\numspacedomain} \wrtdt + \left(\M \jump{ \numpv}_{j-1}, z_{h,j}(t_{j-1}) \right)_{\numspacedomain} \right)
\]
and
\[  \label{eq:def_ah}
\bilinspacesf( \numpv , \numpvtest) := \int_0^T \sum_{\element \in \allelements} \! \left(  ( \A \numpv, \numpvtest)_{\element} + \sum_{\face \in \faceset_\element} (\Aup \jump{ \numpv }_{\face, \element}, z_{h,K})_F  \right) \wrtdt,
\]
where %with $Z_0:=\sqrt{\frac{\pare}{\para}}$ 
\[ \label{eq:def_upwindflux}
\Aup := \half
\begin{pmatrix}
	- \sqrt{\frac{{\alpha}}{{\varepsilon}}} & \norvec_K^\transpose \\
	\norvec_K & - \sqrt{\frac{{\varepsilon}}{{\alpha}}} \norvec_K \norvec_K^\transpose 
\end{pmatrix}
 \in \R^{(1+\dime) \settimes (1+\dime)} 
 \] 
is related to an upwind flux based on solving local Riemann problems, see section \ref{se:upwindflux} for the derivation of $\Aup$ for $d=2$. 

In \eqref{eq:def_ah}, the jump terms $\jump{u_h}_{F,K}$ are not defined for boundary faces $F \subset \Gamma$. To treat the boundary conditions correctly, we adopt the following convention
\[ \label{eq:def_jumpboundary}
\begin{cases}
	\jump{u_h}_{F,K} :=  -   { \numpv } \text{ for }  \face \in \dir , \\
	\jump{u_h}_{F,K} :=  - 2 (0,  { \numv } )^\transpose \text{ for }  \face \in \neu 
\end{cases} 
\]
with $u_h = (p_h , v_h)^\transpose$.

\corr{
As it is shown in \cite{corallo2022} (and also  is proven in section \ref{se:upwindflux}),
the definition \eqref{eq:def_jumpboundary} ensures the bounds
\[ \label{eq:upwindflux_assump}
c_\A  \int_0^T \| A_\norvec \jump{z_h} \|_{L^2(\partial \Omega_h)}^2 \wrtdt \leq 
a_h(z_h,z_h) 
\leq C_\A  \int_0^T \| A_\norvec \jump{z_h} \|_{L^2(\partial \Omega_h)}^2 \wrtdt \quad \forall z_h \in Z_h
\]
with \[ 
\| A_\norvec \jump{z_h}(t) \|_{L^2(\partial \Omega_h)} := \left(\sum_{\element \in \allelements} \sum_{\face \in \faceset_K} \|\ A_\norvec \jump{z_h}(t)\|_\face \right)^{\half}, \quad
%\| \cdot \|
A_{\norvec_K} := \begin{pmatrix}
	0 & \norvec_K^\transpose \\
	\norvec_K & 0 
\end{pmatrix},
\] where $c_\A, C_{\A}>0$ only depend on $\para$ and $\pare$ and $\Aup$
is defined in \eqref{eq:def_upwindflux}.
}

\subsubsection*{Interior penalty method and nonlinear terms}
To discretize the Laplacian terms $- D \lapl$, we use the interior penalty method, see, for example, \cite{dipietro2012, dolejsi_discontinuous_2015} and the references therein.
The associated bilinear form is given by
\[\label{eq:interior_penalty}
\bilinspaceip( \numpv , \numpvtest) := ( \D \jacobian \numpv, \jacobian \numpvtest)_{Q_h} + \int_0^T  \bilinspaceipbdr(\numpv, \numpvtest) \wrtdt
\]
with
\[
\begin{split}
 \bilinspaceipbdr(\numpv, \numpvtest) := &\beta \!\!\! \sum_{\face \in \allfaces_h \setminus \allneufaces_h} \!\!\! - \iptheta ( \jump{\nump}_F, \average{ \grad \numptest}_F  \cdot \norvec_\face )_\face - ( \average{ \grad \nump}_F \cdot \norvec_\face, \jump{ \numptest}_F )_\face + \penpara_{\face, h} (  \jump{\nump}_F, \jump{\numptest}_F)_\face   \\
  &+ \zeta \! \sum_{\face \in \allfaces_h} \!  - \iptheta ( \jump{\numv}_F, \average{ \jacobian \numvtest}_F  \norvec_\face )_\face - ( \average{ \jacobian v}_F \norvec_\face, \jump{ \numvtest}_F )_\face  + \penpara_{\face, h} (  \jump{\numv}_F, \jump{\numvtest}_F)_\face ,
\end{split}
\]
\corr{using $z_h = (q_h, w_h)^\transpose$ and depending on 
a penalty parameter $\penpara_{\face, h}>0$ and
$\Theta \in \{ 1,-1,0 \}$ corresponding to the symmetric, asymmetric and incomplete interior penalty methods, respectively.}
Note that in $\rho_h^\iptheta$ we only sum over boundary faces, when Dirichlet boundary conditions are applied.

To ensure a stable scheme, the penalty parameter $\penpara_{\face, h}:= \frac{C_\sigma}{h_F}$ must be chosen sufficiently large, see, e.g., \cite[Corollary 2.41]{dolejsi_discontinuous_2015} for lower bounds on $C_\sigma>0$. Additionally, $C_\sigma$ should be selected based on the local spacial polynomial degree near $F$. For further details, we refer to \cite[Section 7]{dolejsi_discontinuous_2015} and the references therein.

\corr{For} the nonlinear terms, we consider only volume elements, so that 
\[
\bilinspacenl( \numpv, \numpvtest) := ( \NL( \numpv, \numpv ), z_h )_{\numspacetimedomain} = \half n'_h(u_h,z_h;u_h)
\]
is chosen to discretize the nonlinear terms. For the linearized equation \eqref{eq:linearcont}, we obtain
\[
n'_h( \numpv, \numpvtest; \numpvold) := ( \NL( \numpv, \numpvold) + \NL(\numpvold, \numpv), \numpvtest  )_{\numspacetimedomain}
\]
with $\numpvold \in Z_h$. We remark that $n'_h(\cdot, \cdot;\tild{u}_h)$ is the Fréchet derivative of $n_h(\cdot, \cdot)$ at $\numpvold$.
\subsubsection*{The complete schemes}
The complete nonlinear discrete variational problem approximating equation \eqref{eq:umodel} is then given by
\[ \label{eq:full_scheme_nonlinear}
\text{solve for } \numpv \in \numtestspace \text{ such that } b_h(\numpv , \numpvtest) = \linfull(\numpvtest) \text{ for all } z_h \in Z_h
\]
with 
\[
b_h(\numpv , \numpvtest) := m_h(\numpv , \numpvtest) + a_h(\numpv , \numpvtest) + \bilinspaceip(\numpv , \numpvtest) + \bilinspacenl(\numpv , \numpvtest)
\]
and
\[ \label{eq:l_h}
\linfull( \numpvtest) := (\rhs_h, \numpvtest)_{\numspacetimedomain} + (\M \pv_{0,h}, \numpvtest(0))_{\numspacedomain} + \int_0^\fintime \linbdr( \numpvtest) \wrtdt .
\]
Depending on the boundary conditions, we have 
\[ \label{eq:l_h_boundary}
\begin{split}
	\linbdr(\pvtest_h) := \sum_{\face \in \alldirfaces_h}  &-(A^{\textrm{up}}_\norvec u_{D,h}, \numpvtest)_F  - \iptheta ( \D u_{D,h}, \jacobian \numpvtest  \norvec_\face )_\face + \penpara_{\face, h}( \D {u}_{D,h}, \numpvtest)_F
	\\
	&+ \sum_{\face \in \allneufaces_h} 
	-(2 A^{\textrm{up}}_\norvec (0,v_{N,h})^\transpose, \numpvtest)_F  - \iptheta ( \D (0,v_{N,h})^\transpose, \jacobian \numpvtest  \norvec_\face )_\face \\ &+ \penpara_{\face, h} ( \D (0,v_{N,h})^\transpose, \numpvtest)_F +
	((p_{N,h},0)^\transpose, \numpvtest)_F,
\end{split}
\]
where $f_h, u_{0,h}, u_{D,h}, u_{N,h} \in Z_h$ denote suitably interpolated data. Note that the added terms in \eqref{eq:l_h_boundary} ensure consistency, see Lemma \ref{le:consistent}. The complete linear variational problem discretizing equation \eqref{eq:linearcont} is given by
\[ \label{eq:full_scheme}
\text{solve for } \numpv \in \numtestspace \text{ such that } b_h'(\numpv , \numpvtest; \numpvold) = \linfull(\numpvtest) \text{ for all } z_h \in Z_h
\]
with 
%\[
$b_h'(\numpv , \numpvtest; \numpvold) = m_h(\numpv , \numpvtest) + a_h(\numpv, \numpvtest) + \bilinspaceip(\numpv , \numpvtest) + n_h'(\numpv , \numpvtest; \numpvold).
$ %\]
 
To solve the nonlinear problem \eqref{eq:full_scheme_nonlinear}, we apply a Newton-Raphson method, see \cite{Deuflhard2011}, described in Algorithm \ref{algo:newton}, solving the linear problem \eqref{eq:full_scheme} in each iteration. To this end, we define the continuous residual as
$%\[
r(u) := \G(u) 
$ %\]
\corr{using \eqref{eq:G}}, and the discrete residual as
\[
r_h({u_h}) := b_h(u_h, \cdot)  - \ell_h .
\]
Fixing a tolerance constant $c_\textrm{tol}>0$ and an initial guess $u_h^{(0)}$, the following algorithm generates a sequence $u^{(n)}_h \in Z_h$, $n \in \N$.
\begin{algo}[Newton's method] \label{algo:newton}
\begin{framed}
Set $r_h^{(0)} = r_h({u_h^{(0)}})$ and set $n=0$.\\
While $\| r_h^{(n)} \|_{L^2(Q)} > c_\textrm{\rm tol}$ do:
\begin{enumerate}
\item Solve for $u_h \in Z_h$ such that $b_h(u_h, z_h; {u_h^{(n)}}) = r_h^{(n)}(z_h) \ \forall z_h \in Z_h$. 
\item Update $u^{(n+1)}_h = u^{(n)}_h - u_h$, $r_h^{(n+1)} = r_h({u^{(n+1)}_h})$, $n \to n +1$.
\end{enumerate}
\end{framed}
\end{algo}
\clearpage
\section{Linearized system} \label{se:linearized}
In this section, we treat the well-posedness and convergence analysis of the linear discrete problem \eqref{eq:full_scheme}  under the condition that $\tild{u}_h$ is sufficiently small with respect to a suitable norm.
%We start with some preliminaries.
\subsection{Preliminaries}
%For the following analysis, as in \cite{corallo2022}, we use
%\[ \label{eq:upwindflux_assump}
%c_\A  \int_0^T \| A_\norvec \jump{z_h} \|_{L^2(\partial \Omega_h)}^2 \wrt\,\mathrm dt \leq 
%a_h(z_h,z_h) 
%\leq C_\A  \int_0^T \| A_\norvec \jump{z_h} \|_{L^2(\partial \Omega_h)}^2 \wrt\,\mathrm dt \quad \forall z_h \in Z_h\]
%with \[ 
%\| u_h(t) \|_{L^2(\partial \Omega_h)} := \left(\sum_{\element \in \allelements} \sum_{\face \in \faceset_K} (u_h(t), u_h(t))_\face \right)^{\half}, \quad
%\| \cdot \|
%A_{\norvec_K} := \begin{pmatrix}
%	0 & \norvec_K^\transpose \\
%	\norvec_K & 0 
%\end{pmatrix},
%\] where $c_\A, C_{\A}>0$ only depend on $\para$ and $\pare$ and $\Aup$ is defined in \eqref{eq:def_upwindflux}. Relation \eqref{eq:upwindflux_assump} is proven in section \ref{se:upwindflux}.
%Moreover, w

We assume that $\sigma_{F,h}$ is chosen sufficiently large such that
\[  \label{eq:coersive_d}
d_h(z_h,z_h) \geq c_{\iptheta} \int_0^T \disnorm{z_h(t)}_h^2 \wrtdt \quad \forall z_h \in Z_h,
\]
where $c_{\iptheta}>0$ is independent of $h$ and $\disnorm{\cdot}_h$ is defined below in \eqref{eq:def_ipnorm}. We refer to, e.g., \cite[Corollary 2.41]{dolejsi_discontinuous_2015} for a proof of \eqref{eq:coersive_d} and possible choices of \corr{the penalty parameter
  $\sigma_{F,h}$ in  \eqref{eq:interior_penalty}}.

The analysis is carried out with respect to the $h$-dependent norm 
	\[ \label{eq:DGNorm}
        \| u_h \|_{Z_h} := \left(\half \sum_{j=0}^k \| \M^{\frac{1}{2}} \jump{u_h}_{j} \|_{L^2(\Omega_h)}^2 + \int_0^T c_{\A} \| A_\norvec \jump{u_h} \|_{L^2(\partial \Omega_h)}^2 + c_{\iptheta} \disnorm{ u_h}^2_h
        \wrtdt \right)^\half \text{ in } Q\,,
\]
where with $u_h(t) = u_h=(p_h,v_h)^\transpose$ \corr{for fixed $t$}
\[ \label{eq:def_ipnorm}
\disnorm{u_h}_h:= \left( \| D^\half \jacobian u_h \|_{L^2(\Omega_h)}^2 + \beta \! \sum_{\face \in \allfaces_h \setminus \allneufaces_h} \penpara_{F,h} \| \jump{p_h}_F \|^2_{L^2(\face)} + \zeta \! \sum_{\face \in \allfaces_h} \penpara_{F,h} \| \jump{v_h}_F \|_{L^2(\face)}^2  \right)^\half \text{ in } \Omega
\]
denotes the natural norm used for interior penalty methods. Note that in \eqref{eq:def_ipnorm}, as in the previous section, $\jacobian$ is understood as acting piece wise in each cell, when applied to discontinuous functions. For better readability, we sometimes write $\disnorm{u_h}_{\star, h}:= \left( \int_0^T \disnorm{u_h}_h^2 \wrtdt \right)^\half$.

As corresponding broken Sobolev space, we consider 
$$H^1(\allelements):= \{ \dummyvar \in L^2(\Omega_h;\R^{1+\dime}) : \dummyvar_K \in H^1(K;\R^{1+\dime}) \ \forall \element \in \allelements\}$$  %with norm
and equip it with the norm $\disnorm{\cdot}_h$.
In the analysis, we use the broken Poincaré inequality 
%c.f. \cite[Lemma 2.45]{dolejsi_discontinuous_2015}
\[ \label{eq:poincare1}
\| u_h \|_{L^2(\Omega_h)} \leq c_\textrm{P} \disnorm{ u_h}_h \quad \forall u_h \in H^1(\allelements),
\]
where $c_\textrm{P}>0$ is independent of $h$, cf. \cite{Brenner2003}.
Moreover, we rely on the broken Sobolev inequality 
\[ \label{eq:poincare2}
\| u_h \|_{L^4(\Omega_h)} \leq c_\textrm{S} \disnorm{ u_h}_h \quad \forall u_h \in H^1(\allelements),
\]
where $c_\textrm{S}>0$ is independent of $h$. This inequality is, as the broken Poincaré inequality, just a special case of more general broken Sobolev inequalities. 
%c.f. \cite[Theorem 5.3]{dipietro2012}. 
For a proof of these and other generalized results, we refer the reader to \cite[Theorem 3.7]{Lasis:2003}.
For inequalities \eqref{eq:poincare1}, \eqref{eq:poincare2} to hold, we rely on the assumption that $\alldirfaces_h$ has positive $(d-1)$-dimensional Lebesgue measure, the assumed mesh regularity, as well as $d \in \{2,3\}$.

A key ingredient in the analysis is the following lemma.
\begin{lemma}
$\| \cdot \|_{Z_h}$ is a norm on $Z_h$.
\end{lemma}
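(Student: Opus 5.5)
The plan is to observe that $\| \cdot \|_{Z_h}$ is the square root of a sum of squared seminorms, each of the form $\|\Lambda u_h\|^2$ for a linear map $\Lambda$ into some $L^2$ space. Such a sum is automatically a seminorm: absolute homogeneity is clear, and the triangle inequality follows from Minkowski's inequality on the product of the $L^2$ spaces, using $c_\A, c_\iptheta>0$ and that $\M^{1/2}$, $A_\norvec$ and $D^{1/2}$ are fixed matrices. The only real task is definiteness: $\|u_h\|_{Z_h}=0$ must imply $u_h=0$.

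So let $u_h \in Z_h$ with $\|u_h\|_{Z_h}=0$. Then $\int_0^T \disnorm{u_h(t)}_h^2 \wrtdt = 0$. On each time slab $I_j$, the function $t\mapsto \disnorm{u_h(t)}_h^2$ is polynomial in $t$, since $u_h$ is a tensor-product polynomial on each cell. Hence it vanishes for every $t \in I_j$. Note that the Neumann-face omission for $p_h$ is harmless here, since the Dirichlet faces have positive measure.

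For each such fixed $t$, $u_h(t) \in H^1(\allelements)$, so the broken Poincaré inequality \eqref{eq:poincare1} gives $\|u_h(t)\|_{L^2(\Omega_h)} \le c_{\mathrm P}\disnorm{u_h(t)}_h = 0$. Thus $u_h(t)=0$ for all $t\in I_j$ and all $j$, i.e. $u_h=0$ in $L^2(Q)$. Alternatively, one can argue directly: $\jacobian u_h(t)=0$ piecewise, so $u_h(t)$ is piecewise constant; the interior jumps vanish, so it is globally constant; and the Dirichlet jump terms on faces of positive measure then force the constant to be zero.

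The main point of care, rather than a true obstacle, is the passage from a vanishing time integral to vanishing for every $t$. This uses the polynomial (hence continuous) dependence on $t$ within each slab. It is also the place where one checks that \eqref{eq:poincare1} is applicable, which relies on the assumptions that $\alldirfaces_h$ has positive measure and that $D$ has positive entries $\beta,\zeta$. The jump and $A_\norvec$ terms are not needed for definiteness; they only contribute nonnegatively.
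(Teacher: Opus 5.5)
Your proposal is correct and takes essentially the same approach as the paper. The paper gets the seminorm property from the sum-of-squared-$L^2$-norms structure and definiteness from the vanishing of the $\disnorm{\cdot}_h$ part, deferring the details to \cite[Section 6]{dolejsi_discontinuous_2015}. You supply those details yourself: you pass from $\int_0^T \disnorm{u_h(t)}_h^2 \wrtdt = 0$ to $\disnorm{u_h(t)}_h = 0$ for every $t$ using polynomial dependence on each slab (a step the paper glosses over), and you conclude with the paper's own broken Poincar\'e inequality \eqref{eq:poincare1} or the elementwise-constant argument.
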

\begin{proof}
$ \| \cdot \|_{Z_h} $ is a semi norm, since it is defined by adding different $L^2$-norms together. To show positive definiteness, let $z_h \in Z_h$ be such that $\| z_h \|_{Z_h} = 0$. In particular, this implies $\disnorm{z_h}_h = 0$, which leads to $z_h = 0$, see \cite[Section 6]{dolejsi_discontinuous_2015} for details.
\end{proof}
\subsection{Consistency}
For a consistency result, we define a right hand side with non discrete data, cf. \eqref{eq:l_h},  \eqref{eq:l_h_boundary}, as
\[
\begin{split}
\ell(z_h) := &(\rhs, \numpvtest)_{\numspacetimedomain} + (\M \pv_{0}, \numpvtest(0, \cdot))_{\numspacedomain} 
\\ 
&+ \int_0^\fintime \corr{\Bigg(} - (A^{\textrm{up}}_\norvec u_{D}, \numpvtest)_{\dir}  - \iptheta ( \D u_{D}, \jacobian \numpvtest  \norvec )_{\dir} + \sum_{\face \in \alldirfaces_h} \penpara_{\face, h}( \D {u}_{D}, \numpvtest)_{\face} +
( (p_{N},0)^\transpose, \numpvtest)_{\neu} \\
&\qquad\quad -
 2 (A^{\textrm{up}}_\norvec (0,v_{N})^\transpose, \numpvtest)_{\neu}  - \iptheta ( \D (0,v_{N})^\transpose , \jacobian \numpvtest  \norvec )_{\neu} + \sum_{\face \in \allneufaces_h} \penpara_{\face, h} ( \D( 0,v_{N})^\transpose, \numpvtest)_{\face} \corr{\Bigg)} \wrtdt.
\end{split}
\]
\begin{lemma} \label{le:consistent}
Let $  \hat{u} \in U$ be the solution to the nonlinear equation \eqref{eq:umodel}, and let $u \in U$ denote the solution to the linearized equation \eqref{eq:linearcont} with given $\tild{u} \in U$. Then, it holds
\[ \label{eq:consistent1}
b_h(\hat{u}, \numpvtest) = \ell(\numpvtest) \quad \forall \numpvtest \in Z_h
\] 
and for any $\tild{u}_h \in Z_h$
\[ \label{eq:consistent2}
b_h'(u, \numpvtest; \tild{u}_h) = \ell(\numpvtest) + n_h'(u, z_h; \tild{u}_h - \tild{u})\quad \forall \numpvtest \in Z_h .
\] 
\end{lemma}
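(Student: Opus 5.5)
We prove \eqref{eq:consistent1} by plugging $\hat u$ into each bilinear form of $b_h$ and integrating by parts cell by cell, until each form reduces to a volume term of the strong operator plus boundary contributions. Since $\hat u \in U = H^1(0,T;L^2) \cap L^2(0,T;H^2)$, it has no time jumps at interior time points $t_1,\dots,t_{k-1}$. Likewise, $\hat u$ and $\grad \hat u$ have no space jumps across interior faces $F\in\allintfaces$ (in the trace sense, for a.e.\ $t$), so $\grad \hat u\cdot\norvec_F$ is single valued there.

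We take the forms one at a time.

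\emph{Time form.} In $m_h(\hat u,z_h)$ only the term with $j=1$ survives among the jump terms, namely $\jump{\hat u}_0=\hat u(0)=u_0$. Hence
\[
m_h(\hat u,z_h)=(\M\dt\hat u,z_h)_{Q_h}+(\M u_0,z_h(0))_{\Omega_h}.
\]

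\emph{Friedrichs form.} In $a_h(\hat u,z_h)$ the interior face terms vanish because $\jump{\hat u}_{F,K}=0$. On boundary faces the convention \eqref{eq:def_jumpboundary} gives the following.
\begin{itemize}
\item On $\dir$: $\jump{\hat u}_{F,K}=-\hat u=-u_D$, producing $-(\Aup u_D,z_h)_{\dir}$.
\item On $\neu$: $\jump{\hat u}_{F,K}=-2(0,\hat v)^\transpose=-2(0,v_N)^\transpose$, producing the corresponding term in $\ell$.
\end{itemize}

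\emph{Interior penalty form.} In $d_h(\hat u,z_h)$ we integrate $(\D\jacobian\hat u,\jacobian z_h)_K$ by parts on each $K$. This yields $(-\D\lapl\hat u,z_h)_K+(\D\jacobian\hat u\,\norvec_K,z_h)_{\partial K}$. Summing over $K$ and using single-valuedness of the flux, the face terms combine into
\[
\sum_F(\average{\D\jacobian\hat u}_F\norvec_F,\jump{z_h}_F)_F ,
\]
taken over interior faces and over all boundary faces. Then:
\begin{itemize}
\item These cancel exactly against the terms $-(\average{\grad\hat p}\cdot\norvec_F,\jump{q_h})_F$ and $-(\average{\jacobian\hat v}\norvec_F,\jump{w_h})_F$ of $\rho_h^\Theta$ on interior faces and on Dirichlet faces, and for the $v$-component also on Neumann faces.
\item What remains for the $p$-component on $\neu$ is $(\parb\grad\hat p\cdot\norvec,q_h)_{\neu}=(p_N,q_h)_{\neu}$. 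This is the $((p_N,0)^\transpose,z_h)_{\neu}$ term of $\ell$.
\item The $\Theta$-terms and penalty terms involve $\jump{\hat u}_F$. These vanish on interior faces and equal the boundary data on exterior faces, namely $u_D$ on Dirichlet faces and $v_N$ for the velocity on Neumann faces. They reproduce the remaining $\Theta$- and $\sigma$-terms in $\ell$.
\end{itemize}

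\emph{Nonlinear form.} $n_h(\hat u,z_h)=(\NL(\hat u,\hat u),z_h)_{Q_h}$ directly.

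Adding these four contributions and using the PDE \eqref{eq:umodel} pointwise a.e.\ gives $b_h(\hat u,z_h)=\ell(z_h)$.

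For \eqref{eq:consistent2} we repeat the same computation with $u$, the solution of \eqref{eq:linearcont}. The linear forms are handled verbatim. We interpret the right-hand side as $f^{\tild u}$ in $\ell$, i.e.\ $\ell$ is formed with the data of \eqref{eq:linearcont}. This gives
\[
m_h(u,z_h)+a_h(u,z_h)+d_h(u,z_h)+n'_h(u,z_h;\tild u)=\ell(z_h).
\]
Since $n_h'$ is linear in its third argument, $n'_h(u,z_h;\tild u_h)=n'_h(u,z_h;\tild u)+n'_h(u,z_h;\tild u_h-\tild u)$, and the claim follows. The third argument may be a broken function, since $n'_h$ involves only volume integrals over $Q_h$ with broken derivatives.

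The main obstacle is the bookkeeping of boundary terms. One must check carefully that the sign conventions match: the jump $\jump{\cdot}_{F,K}$ defined as neighbor minus own in $a_h$, versus $\jump{\cdot}_F$ with $\norvec_F$ in $\rho_h^\Theta$. In particular, on Neumann faces the factor $2$ and the exclusion of the $p$-component from $\rho_h^\Theta$ must line up with $\ell$. I would verify this face type by face type (interior, Dirichlet, Neumann) and component by component ($p$ versus $v$). A secondary point is justifying traces of $\grad \hat u$ on faces, which holds for a.e.\ $t$ since $\hat u(t)\in H^2$.
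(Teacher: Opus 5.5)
Your proposal is correct and follows the paper's proof essentially step by step. You kill the interior time and space jumps using the regularity of $\hat u$, integrate $d_h$ by parts cell by cell so the consistency terms of $\rho_h^\Theta$ cancel except for $(p_N,q_h)_{\neu}$, and let the boundary-jump conventions produce $\ell$. You then test the PDE against $z_h$ and obtain \eqref{eq:consistent2} by splitting $n_h'(u,z_h;\tild{u}_h)$ using linearity in the third argument (the paper adds and subtracts the same term). Two of your remarks correctly make explicit what the paper leaves implicit: $\ell$ in \eqref{eq:consistent2} must be built with $f^{\tild{u}}$, and the initial term carries the factor $\M$.
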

\begin{proof}
Due to the regularity of $\hat{u} \in U$, all jump terms in \eqref{eq:full_scheme} involving interior faces vanish. In particular, for any $z_h \in Z_h$ we have
\[  \label{eq:cons1}
\begin{split}
\corr{m_h(\hat{u}, \numpvtest)  
+ a_h(\hat{u}, \numpvtest)  }
= ( L \hat{u} &, \numpvtest)_\numspacetimedomain + ( u_0, z_h(0, \cdot))_\numspacedomain 
\\
&- \int_0^T ( A^{\textrm{up}}_\norvec u_D, z_h )_{\dir} + 2 ( A^{\textrm{up}}_\norvec ( 0,v_N)^\transpose , z_h )_{\neu} \wrtdt ,
\end{split}
\]
and with $z_h=(q_h,w_h)^\transpose$, \corr{using integration by parts, we obtain}
\[ \label{eq:cons2}
\begin{split}
	d_h(\hat{u}, \numpvtest) &= \int_0^T \sum_{\element \in \allelements} (\D \jacobian \hat{u}, \jacobian z_h)_\element +  \bilinspaceipbdr(\hat{u},z_h) \wrtdt
	\\
	&= \int_0^T \sum_{\element \in \allelements} \left(  - ( D \lapl \hat{u}, z_h)_\element +( \D \jacobian \hat{u} \norvec_K, \numpvtest)_{\boundary \element} \right) + \bilinspaceipbdr(\hat{u},z_h)
        \wrtdt \\
	&=  - ( D \lapl \hat{u}, z_h)_\numspacetimedomain + \int_0^T   \sum_{\face \in \allfaces} ( \D \jacobian \hat{u} \norvec_F, \numpvtest)_{\face}  + \bilinspaceipbdr(\hat{u},z_h) \wrtdt \\
	&=  (- D \lapl \hat{u}, z_h)_\numspacetimedomain 
	+ \int_0^T \corr{\Bigg(} \sum_{F \in \alldirfaces_h} \left( - \iptheta (\D u_{D,h}, \jacobian \numpvtest  \norvec_\face )_\face + \penpara_{\face, h} ( \D u_{D,h}, \numpvtest)_F  \right)
	\\
	&
        \qquad
        \qquad
        \qquad
        \qquad
        + ( p_N, q_h)_{\neu} + \parf \sum_{F \in \allneufaces_h} \left( - \iptheta (  v_{N,h}, \jacobian \numvtest  \norvec_\face )_\face + \penpara_{\face, h} ( v_{N,h}, \numvtest)_F \right) \corr{\Bigg)}  \wrtdt,
\end{split}
\]
where we use that $\average{ \jacobian \hat{u}}_F = \jacobian \hat{u}$ and $\jump{\jacobian \hat{u}}_F = 0$ for all $\face \in \allintfaces$,  cf. \cite[Section 4.2.1.1]{dipietro2012}.  

By testing equation \eqref{eq:umodel} locally in every space-time cell with $z_h \in Z_h$, and summing over every space-time cell we obtain
\[
(L \hat{u} - D\lapl \hat{u} + N(\hat{u},\hat{u}), z_h)_{Q_h} = (f, z_h)_{Q_h}.
\]
Combining the previous equation with equations \eqref{eq:cons1} and \eqref{eq:cons2}, we arrive at \eqref{eq:consistent1}. %The second result follows in the same way.
Analogously, testing equation \eqref{eq:linearcont} gives
\[
(L u - D \lapl u + \Non(u,\tild{u}) + \Non(\tild{u},u), z_h)_{Q_h} = (f, z_h)_{Q_h}.
\]
Since equations \eqref{eq:cons1} and \eqref{eq:cons2} also hold when $\hat{u}$ is replaced by ${u}$, we obtain
\[
\begin{split}
b_h'(u, \numpvtest; \tild{u}_h) &= b_h'(u, \numpvtest; \tild{u}_h) + n_h'(u, \numpvtest; \tild{u})  - n_h'(u, \numpvtest; \tild{u}) \\
&= (m_h+a_h+d_h)(u,z_h) + n_h'(u,z_h, \tild{u}_h) +  n_h'(u, \numpvtest; \tild{u})  - n_h'(u, \numpvtest; \tild{u})
\\
&= \ell(\numpvtest) + n_h'(u, z_h; \tild{u}_h - \tild{u}).
\end{split}
\]
\end{proof}
\subsection{Well-posedness} \label{se:well-posedness}
In this section, we establish discrete well-posedness of \eqref{eq:full_scheme} for sufficiently small $\tild{u}_h$. For the dual space of a normed space $X$, we write $X'$. %As an auxiliary step, we first prove the following well-posedness result for the linearized system.
\begin{lemma} \label{lem:linear_wellposed}
	Let $\ell_h \in Z_h'$. There exists $\varrho>0$ such that for all $\tild{u}_h \in Z_h$ with
	\[ \label{eq:smallness_cond}
	 \max_{t\in [0,T]} \disnorm{ \numpvold(t) }_h \leq \varrho \] 
	the variational problem \eqref{eq:full_scheme}
	\[
	b_h'(\numpv, \numpvtest; \numpvold) = \linfull(\numpvtest) \quad \forall \numpvtest \in Z_h
	\] 
has a unique solution $u_h \in Z_h$. Moreover, there exists $c_{\varrho}>0$ independent of $h$ such that the a priori estimate
\[ \label{eq:apriori}
\| u_h \|_{Z_h} \leq c_{\varrho}^{-1} \| \ell_h \|_{Z_h'}
\]
holds.
\end{lemma}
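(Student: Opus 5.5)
Since $Z_h$ is finite dimensional and \eqref{eq:full_scheme} is a square linear system, it suffices to establish an inf-sup (or coercivity-type) estimate
\[
c_\varrho \| u_h \|_{Z_h} \le \sup_{0\neq z_h \in Z_h} \frac{b_h'(u_h,z_h;\tild{u}_h)}{\| z_h\|_{Z_h}} \quad \forall u_h \in Z_h .
\]
This gives injectivity, hence existence and uniqueness, and the a priori bound \eqref{eq:apriori} follows by testing with the solution. The plan is to test not with $z_h = u_h$ directly but with an exponentially weighted test function $z_h = e^{-\lambda t} u_h$ (or its $L^2$-projection into $Z_h$ when $p_R\ge 1$; alternatively with $u_h$ itself, which already controls the jump and penalty parts). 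The weight is needed because the nonlinear linearized term contains zero-order contributions in $u_h$ with no damping in time.

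\emph{First step.} Evaluate the linear part on the diagonal. Integrating $(\M\dt u_h,u_h)$ by parts on each $I_j$ and combining with the time-jump terms of $m_h$ gives the standard identity $m_h(u_h,u_h)=\tfrac12\sum_{j=0}^k\|\M^{1/2}\jump{u_h}_j\|^2_{L^2(\Omega_h)}$. Next, \eqref{eq:upwindflux_assump} yields $a_h(u_h,u_h)\ge c_\A\int_0^T\|A_\norvec\jump{u_h}\|^2\wrtdt$, and \eqref{eq:coersive_d} yields $d_h(u_h,u_h)\ge c_\iptheta\disnorm{u_h}_{\star,h}^2$. Hence $(m_h+a_h+d_h)(u_h,u_h)\ge\|u_h\|_{Z_h}^2$. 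Crucially, the $d_h$ contribution already controls $\int_0^T\disnorm{u_h}_h^2$, and through \eqref{eq:poincare1} the $L^2(Q)$ norm as well, so no Gronwall weight should in fact be needed. I will therefore first try the simpler test $z_h=u_h$.

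\emph{Second step, the main obstacle.} Bound $|n_h'(u_h,u_h;\tild{u}_h)|$ by $C\varrho\,\disnorm{u_h}^2_{\star,h}$. Each term of $\NL(u_h,\tild{u}_h)+\NL(\tild{u}_h,u_h)$ is a product of a (broken) gradient of one argument with the other argument, tested against $u_h$. For instance,
\[
(\gamma\,\tild p_h\divv v_h, p_h)\le \gamma\|\tild p_h\|_{L^4}\|\jacobian v_h\|_{L^2}\|p_h\|_{L^4}, \qquad (\delta\grad\tild p_h\cdot v_h,p_h)\le\delta\|\grad\tild p_h\|_{L^2}\|v_h\|_{L^4}\|p_h\|_{L^4}.
\]
Using Hölder's inequality with exponents $(4,2,4)$ or $(2,4,4)$ together with the broken Sobolev inequality \eqref{eq:poincare2}, each term is at most $C\disnorm{\tild{u}_h(t)}_h\disnorm{u_h(t)}_h^2$ pointwise in $t$. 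Here $C$ depends on $\gamma,\delta,\eta,\theta,c_\textrm{S},D^{-1}$. Integrating in time and using \eqref{eq:smallness_cond} gives $|n_h'(u_h,u_h;\tild u_h)|\le C_N\varrho\int_0^T\disnorm{u_h}^2_h\wrtdt\le C_N\varrho\, c_\iptheta^{-1}\|u_h\|^2_{Z_h}$. The care needed is that only broken gradients appear: the discrete nonlinear form has volume terms only, so no face terms arise, and the estimate is uniform in $h$ because the constants in \eqref{eq:poincare2} are.

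\emph{Third step.} Choose $\varrho$ with $C_N\varrho\le \tfrac12 c_\iptheta$. This gives
\[
b_h'(u_h,u_h;\tild u_h)\ge \tfrac12\|u_h\|^2_{Z_h}.
\]
Thus $u_h=0$ whenever $b_h'(u_h,\cdot;\tild u_h)=0$, and the finite-dimensional square system is uniquely solvable. Testing \eqref{eq:full_scheme} with $z_h=u_h$ gives $\tfrac12\|u_h\|_{Z_h}^2\le\|\ell_h\|_{Z_h'}\|u_h\|_{Z_h}$, i.e. \eqref{eq:apriori} with $c_\varrho=\tfrac12$, independent of $h$. Should the Sobolev step fail to absorb into the $d_h$ part (e.g. if $\varrho$ must scale differently), the fallback is the weighted test $e^{-\lambda t}u_h$ combined with an $L^2$-projection argument.
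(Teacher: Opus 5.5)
Your proposal is correct and follows the paper's proof essentially verbatim. You test with $z_h=u_h$ and use the lower bounds for $m_h+a_h$ and $d_h$. You bound $n_h'$ via Hölder with exponents $(2,4,4)$ and the broken Sobolev inequality by $C\varrho\int_0^T \disnorm{u_h}_h^2\,\mathrm{d}t$. Choosing $\varrho$ small enough to absorb this into the $d_h$ part gives coercivity with an $h$-independent constant, hence unique solvability and the a priori bound. The exponentially weighted test function you mention is unnecessary, as you yourself conclude: the interior penalty term already controls $u_h$ in $L^2(0,T;L^4)$, so no Gronwall-type argument is needed.
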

\begin{proof}
Let $u_h \in Z_h$. We show that $b'_h(\numpv, \numpvtest; \numpvold) = 0 \quad \forall \numpvtest \in \numtestspace$ implies $\numpv = 0$, by proving coercivity of the bilinear form $b_h'(\cdot,\cdot; \tild{u}_h) : Z_h \times Z_h \to \R$ with respect to $\| \cdot \|_{Z_h}$.
	
Using relation \eqref{eq:upwindflux_assump} and integration by parts in time we obtain, cf. \cite[Eq. 30]{corallo2022},
\[ \label{eq:coersive_b}
m_h(u_h,u_h) + a_h(u_h,u_h) \geq \half \sum_{j=0}^k \| \jump{u_h}_{j} \|_{L^2(\Omega_h)}^2 + \int_0^T c_A \| A_\norvec \jump{u_h} \|_{L^2(\partial \Omega_h)}^2  \wrtdt .
\]
To estimate the trilinear form $n_h'(\cdot, \cdot; \cdot)$, we combine Hölder's inequality with the broken Sobolev inequality \eqref{eq:poincare2} to obtain
\[  \label{eq:coersive_n}
\begin{split}
	| n_h'(u_h,u_h;\tild{u}_h) | 
	&\leq \int_0^T \sum_{\element \in \allelements}  | ( N(u_h, \tild{u}_h )+ N( \tild{u}_h, u_h), u_h)_{\element} | \wrtdt  \\
	& \leq \mu \int_0^T \sum_{\element \in \allelements} \! \| \jacobian u_h \|_{L^2(\element)} \| u_h \|_{L^4(\element)} \| \tild{u}_h  \|_{L^4(\element)} + \| \jacobian \tild{u}_h \|_{L^2(\element)} \|u_h \|_{L^4(\element)}^2  \wrtdt \\
	& \leq \mu \int_0^T \corr{\Bigg(}
        \left( \sum_{\element \in \allelements}  \| \jacobian u_h \|_{L^2(\element)}^2 \right)^\half \left(\sum_{\element \in \allelements} \| u_h \|_{L^4(\element)}^4 \right)^\frac{1}{4} \left( \sum_{\element \in \allelements} \| \tild{u}_h  \|_{L^4(\element)}^4 \right)^\frac{1}{4} \\
	&\qquad \qquad + \left( \sum_{\element \in \allelements} \| \jacobian \tild{u}_h \|_{L^2(\element)}^2 \right)^\half  \left( \sum_{\element \in \allelements} \|u_h \|_{L^4(\element)}^4 \right)^\half\corr{\Bigg)}  \wrtdt \\
	%& \leq C_n \| \tild{u}_h \|_{L^\infty(0,T;H^1(\domain_h))} \int_0^T \|  u_h\|_{H^1(\domain_h)}^2 \wrtdt \\
	& \leq 2 \mu c_\textrm{S}^2 D_{\max} \int_0^T \disnorm{ u_h }_h^2 \, \disnorm{ \tild{u}_h }_h  \wrtdt \\
	&\leq C_n \left( \max_{[0,T]}\, \disnorm{ \tild{u}_h }_h \right) 
	\disnorm{u_h}_{\star, h}^2  
	%&\leq { 2 \mu c_\textrm{S}^2 D_{max}}  \disnorm{ \tild{u}_h }_{\star,h}   \disnorm{u_h}_{\star, h}^2   \\
	%&\leq C_n \varrho \disnorm{u_h}_{\star, h}^2
	\end{split}
\]
with $\mu:=\max\{ \parc, \pard, \parg, \parh\}>0$, $D_{\max}:=\max\{ \parb^{-\half}, \parf^{-\half}\}$ and $C_n := { 2 \mu c_\textrm{S}^2 D_{\max}   }>0$. 

Combining estimates \eqref{eq:coersive_d}, \eqref{eq:coersive_b}, \eqref{eq:coersive_n}, choosing $\varrho \in (0, c_{\iptheta} C_n^{-1})$, and using the smallness assumption on $\tild{u}_h$ \eqref{eq:smallness_cond} gives
	\[ \label{eq:coercivity}
	\begin{split}
	b'_h(\numpv, \numpv; \numpvold) & \geq m_h(\numpv, \numpv) +  a_h(\numpv, \numpv) + d_h(\numpv, \numpv) - |n'_h( \numpv, \numpv; \numpvold) | \\
 &\geq 
 \half \sum_{j=0}^k \| \jump{u_h}_{j} \|_{L^2(\Omega_h)}^2 + \int_0^T c_{A} \| A_\norvec \jump{u_h} \|_{L^2(\partial \Omega_h)}^2  + ( c_{\iptheta} - \varrho C_n )  \disnorm{ \numpv}^2_h \wrtdt \\
 & \geq c_{\varrho} \| u_h \|_{Z_h}^2
	\end{split}
	\]
with $c_\varrho:= \min\{1, {c_{\iptheta} - \varrho C_n} \}$ independent of $h$. This shows
\corr{coercivity} and the a priori estimate \eqref{eq:apriori} follows by means of standard methods, see e.g. \cite[Lemma 1.4]{dipietro2012}.
\end{proof}
\subsection{Convergence in the discontinuous Galerkin norm}
In this section, we establish convergence of the linear DG scheme \eqref{eq:full_scheme} with respect to $\| \cdot \|_{Z_h}$. To this end, we first introduce relevant projections. 
%For simplicity, we assume in this section that the polynomial degrees are globally constant, i.e., $p=p_R$ and $q=q_R$ for all space-time cells $\spacetimeelement \in \spacetime_h$.
Let $\Pi_{h}: L^2(\Omega;\R^i) \to \polynomialspace_h(\Omega;\R^i) $ with $i \in \N$ denote the spatial $L^2$-projection defined by
\[
(\Pi_{h} u, z_h )_\Omega = (u, z_h )_\Omega \quad \forall z_h \in \polynomialspace_h(\Omega;\R^{i}) 
\]
and $\pi_h : C(0,T;L^2(\Omega;\R^{1+d})) \to Z_h$ denote the space-time \corr{projection} with the following properties
\begin{itemize}
	\item[(i)] $\pi_h u \in Z_h$,
	\item[(ii)] $(\pi_h u )_{j}(x,t_j) = \Pi_h u_j (x,t_j) \text{ for a.e. } x \in \Omega$ and $j \in \{1, \ldots, k\}$ with $i=1+d$,
	\item[(iii)] $\int_{I_j} (\pi_h u - u, z_h)_\Omega \wrtdt = 0 \quad \forall z_h \in Z_h^{-}$ and $j \in \{1, \ldots, k\}$,
\end{itemize}
cf. \cite[Section 6.1.4]{dolejsi_discontinuous_2015}, where existence and uniqueness among other properties of $\pi_h$ are shown. We denote by $Z^-_h$ the space obtained when decreasing the polynomial time degrees by one in each space-time cell. Due to this property, we assume for the rest of the paper that, the time degrees of each cell are at least one.
The following result is the basis for an estimate of the numerical error in terms of projection errors.
\begin{lemma} \label{le:convergence_1}
	Let $u \in U$ be the solution to the continuous linear system \eqref{eq:linearcont} with  $\tild{u} \in U$ and $u_h \in Z_h$ the solution to the discrete linear problem \eqref{eq:full_scheme} with $\tild{u}_h \in Z_h$ such that \eqref{eq:smallness_cond} holds with $\varrho>0$ as in Lemma \ref{lem:linear_wellposed}. Then
	\[
	\begin{split}
	  \| u_h - \pi_h u \|_{Z_h}^2 \leq  \ &C_Z \left( \sum_{j=1}^k \! \| \M^\half ( \pi_h u - u  )_{j}(t_j) \|^2_{L^2(\Omega_h)}
          + \int_0^T \corr{\Big(}
          \| \pi_h u - u \|_{L^2(\partial \Omega_h)}^2
          + \disnorm{ \pi_h u - u }_{ h}^2\corr{\Big)} \wrtdt \right) \\  
		&+ C_R \left( \| \ell_h - \ell \|_{Z_h'}^2 + \disnorm{ \tild{u}_h - \tild{u} }^2_{\star, h}    \right) 
	\end{split}
	\]
	with $C_Z,C_R>0$ independent of $h$.
\end{lemma}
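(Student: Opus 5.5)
Set $\xi_h := u_h - \pi_h u \in Z_h$ and $\eta := \pi_h u - u$, so that $u_h - u = \xi_h + \eta$. The plan is to test the discrete equation with $\xi_h$ and use the coercivity from Lemma \ref{lem:linear_wellposed}. By \eqref{eq:coercivity},
\[
c_\varrho \| \xi_h \|_{Z_h}^2 \le b_h'(\xi_h,\xi_h;\tild{u}_h) = b_h'(u_h,\xi_h;\tild{u}_h) - b_h'(\pi_h u,\xi_h;\tild{u}_h).
\]
The first term equals $\ell_h(\xi_h)$ by \eqref{eq:full_scheme}. For the second, write $\pi_h u = u + \eta$ and use the consistency relation \eqref{eq:consistent2} from Lemma \ref{le:consistent}. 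This yields the error equation
\[
c_\varrho \| \xi_h \|_{Z_h}^2 \le (\ell_h - \ell)(\xi_h) - n_h'(u,\xi_h;\tild{u}_h - \tild{u}) - b_h'(\eta,\xi_h;\tild{u}_h).
\]
The first term is bounded by $\|\ell_h-\ell\|_{Z_h'}\|\xi_h\|_{Z_h}$.

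For the nonlinear consistency term I will proceed as in \eqref{eq:coersive_n}, using H\"older's inequality with $L^2\cdot L^4\cdot L^4$ and the broken Sobolev inequality \eqref{eq:poincare2}. This gives
\[
|n_h'(u,\xi_h;\tild{u}_h-\tild{u})| \le C \int_0^T \disnorm{u}_h \disnorm{\tild{u}_h-\tild{u}}_h \disnorm{\xi_h}_h \wrtdt \le C \| u\|_{L^\infty(0,T;H^1)} \disnorm{\tild{u}_h-\tild{u}}_{\star,h} \|\xi_h\|_{Z_h}.
\]
Here $u\in U$ embeds into $L^\infty(H^1)$, and the resulting constant is absorbed into $C_R$. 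This step also needs $\nabla u\in L^4$, or else one places the gradient on the differences. I will choose whichever pairing keeps every gradient in $L^2$. Since both $\tild{u}_h-\tild{u}$ and $u$ are only broken-$H^1$, one factor must carry the gradient in $L^2$ and the other two go into $L^4$; the smoothness of $u$ makes this possible.

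The main obstacle is $b_h'(\eta,\xi_h;\tild{u}_h)$, which has to be bounded by the projection quantities on the right-hand side times $\|\xi_h\|_{Z_h}$.

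For the time part $m_h(\eta,\xi_h)$, I integrate by parts in time on each $I_j$. By property (iii) of $\pi_h$, the volume term $\int_{I_j}(\M\eta,\dt\xi_h)$ vanishes, because $\dt\xi_h\in Z_h^-$. What remains are upwind jump terms of the form $(\M\eta_j(t_j), \jump{\xi_h}_j)$. These are bounded via Cauchy--Schwarz by $\sum_j\|\M^{1/2}\eta_j(t_j)\|^2$ times the jump part of $\|\xi_h\|_{Z_h}$. Property (ii) further simplifies these terms, since the trace equals the spatial $L^2$-projection.

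For $a_h(\eta,\xi_h)$, I integrate by parts in space so that the derivatives fall on $\xi_h$. The volume term $(\eta,\A\xi_h)$ cannot be removed by orthogonality in space, because $\pi_h$ is only an $L^2$-type projection in space. Instead it is bounded by $\|\eta\|\,\disnorm{\xi_h}_h$, using $\|\A\xi_h\|\lesssim\disnorm{\xi_h}_h$. The face terms are bounded by $\|\eta\|_{L^2(\partial\Omega_h)}$ times $\|A_\norvec\jump{\xi_h}\|$, combined with a trace term controlled through $\disnorm{\xi_h}_h$; this may require an inverse/trace estimate together with the $\sigma_{F,h}$ weighting.

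The interior penalty form $d_h(\eta,\xi_h)$ is handled by the standard continuity estimate of $d_h$ on $H^1(\allelements)$, which is bounded by $\disnorm{\eta}_h$ plus averaged normal-flux face terms of $\eta$. These face terms are absorbed into $\|\eta\|_{L^2(\partial\Omega_h)}$ and $\disnorm{\eta}_h$, possibly through an augmented norm. The term $n_h'(\eta,\xi_h;\tild{u}_h)$ is bounded again as in \eqref{eq:coersive_n} by $C_n\varrho\,\disnorm{\eta}_{\star,h}\disnorm{\xi_h}_{\star,h}$.

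Collecting all the bounds, every term has the form (data)$\cdot\|\xi_h\|_{Z_h}$. Young's inequality then absorbs $\|\xi_h\|_{Z_h}^2$ into the left-hand side and gives the claim with $C_Z,C_R$ depending on $c_\varrho$, $\M$, $\D$, the mesh regularity and $u$, but independent of $h$.
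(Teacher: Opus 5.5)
Your skeleton matches the paper's proof exactly: test with $\xi_h=u_h-\pi_h u$, use coercivity \eqref{eq:coercivity}, derive the error equation from \eqref{eq:consistent2}, bound term by term, and absorb with Young. Your treatment of $m_h$, of $n_h'(\eta,\xi_h;\tilde u_h)$ and of the remainder $n_h'(u,\xi_h;\tilde u_h-\tilde u)$ is also the same as the paper's.

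The real difference is $a_h(\eta,\xi_h)$. The paper keeps the derivative on $\eta$, bounds $(A\eta,\xi_h)_K$ by $\|\nabla\eta\|_{L^2(K)}\|\xi_h\|_{L^2(K)}$, and shifts the face jumps onto $\xi_h$ via Lemma~\ref{le:upwind_symetric}. You integrate by parts in space instead, and your route is the one that actually closes.

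Write $A^{\mathrm{up}}_{\mathbf n_K}=A_D+\tfrac12A_{\mathbf n_K}$. The identity of Lemma~\ref{le:upwind_symetric} holds for the $A_D$ part but not for the $\tfrac12A_{\mathbf n_K}$ part, because $A^{\mathrm{up}}_{\mathbf n_{K_F}}=A_D-\tfrac12A_{\mathbf n_K}$. A concrete check: for $u_K=(1,0)^\top$, $u_{K_F}=0$, $z_K=0$, $z_{K_F}=(0,\mathbf n_K)^\top$, the left side is $-\tfrac12|F|$ and the right side is $+\tfrac12|F|$. So the upwind face sum really contains terms $(A_{\mathbf n_K}\jump{\eta}_{F,K},\average{\xi_h}_F)_F$, and no jump of $\xi_h$ controls these.

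The boundary terms produced by your integration by parts cancel exactly this part:
\begin{itemize}
\item on an interior face the four contributions combine to $\big(A_D(\eta_{K_F}-\eta_K)+\tfrac12A_{\mathbf n_K}(\eta_K+\eta_{K_F}),\,\xi_{h,K}-\xi_{h,K_F}\big)_F$;
\item on Dirichlet faces they combine to $\big((\tfrac12A_{\mathbf n_K}-A_D)\eta,\xi_h\big)_F$;
\item on Neumann faces only the normal velocity component of $\xi_h$ survives.
\end{itemize}
Since $|A_Dw|\le C|A_{\mathbf n}w|$, all face terms are bounded by $C\|\eta\|_{L^2(\partial\Omega_h)}\|A_{\mathbf n}\jump{\xi_h}\|_{L^2(\partial\Omega_h)}$. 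The volume term $(\eta,A\xi_h)$ is bounded by $C c_{\mathrm P}\disnorm{\eta}_h\disnorm{\xi_h}_h$ via \eqref{eq:poincare1}.

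You should write this recombination out explicitly. Drop the hedge about a leftover trace of $\xi_h$ handled by an inverse/trace estimate. No such term remains, and if one did, the inverse estimate would cost $h^{-1/2}$, so $C_Z$ would no longer be $h$-independent.

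One gap you share with the paper concerns $d_h(\eta,\xi_h)$. The flux terms $(\average{\nabla\eta}_F\mathbf n_F,\jump{\xi_h}_F)_F$ require $h_F^{1/2}\|\average{\nabla\eta}_F\|_{L^2(F)}$. Neither $\|\eta\|_{L^2(\partial\Omega_h)}$ nor $\disnorm{\eta}_h$ controls this quantity, so your claim that these terms are absorbed into those two is not right. Strictly, an augmented-norm contribution such as $h^2|\eta|^2_{H^2(\mathcal K_h)}$ belongs on the right-hand side. The paper's bound $|\rho_h^\Theta(\eta,z_h)|\le C\disnorm{\eta}_h\disnorm{z_h}_h$ passes over this point in the same way. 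It does not change the convergence orders.
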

\begin{proof}
	By consistency, see Lemma \ref{le:consistent} equation \eqref{eq:consistent2}, it holds for all $z_h \in Z_h$
	\[
	b_h'( u_h - u, z_h; \tild{u}_h ) = b_h'( u_h , z_h; \tild{u}_h ) - b_h'( u, z_h; \tild{u}_h )  = \ell_h(z_h) - \ell(z_h) - n_h'(u, z_h; \tild{u}_h - \tild{u}) ,
	\]
	which is equivalent to
	\[
	b_h'( u_h - \pi_h u, z_h; \tild{u}_h ) = - b_h'( \pi_h u  - u, z_h ; \tild{u}_h ) +  \ell_h(z_h) - \ell(z_h) - n_h'(u, z_h; \tild{u}_h - \tild{u}) .
	\]
	Choosing $z_h = u_h - \pi_h u \in Z_h $, defining remainder terms $\Rem := | (\ell_h- \ell)( u_h - \pi_h u) | + | n_h'(u,  u_h - \pi_h u; \tild{u}_h - \tild{u}) |$, and
	by coercivity of $b_h'$ derived in the proof of Lemma \ref{lem:linear_wellposed}, see equation \eqref{eq:coercivity}, we obtain
	\[ \label{eq:error_estimate1}
	c_\varrho \| u_h - \pi_h u \|_{Z_h}^2 \leq \vert b'_h( \pi_h u  - u, u_h - \pi_h u ; \tild{u}_h ) \vert +  \Rem.
	\]
	For the rest of the proof, we estimate the right hand side of \eqref{eq:error_estimate1} further.
	Starting with $m_h$, we have for all $z_h \in Z_h$ using partial integration in time, property (iii) of $\pi_h$, Hölder's inequality, and Young's inequality
	\[
	\begin{split}
		\vert m_h (\pi_h u - u, z_h ) \vert &= \left\vert \sum_{j=1}^k \int_{t_{j-1}}^{t_j} (  \M \dt (\pi_h u - u) , z_h )_{\Omega_h} \wrtdt 
		+ (M \jump{\pi_h u - u}_{j-1} , z_{h,j}(t_{j-1}) )_{\Omega_h} \right\vert
		\\
		&= \left\vert \sum_{j=1}^k - \int_{t_{j-1}}^{t_j} (  \M (\pi_h u - u) , \dt z_h )_{\Omega_h} \wrtdt 
		- (M (\pi_h u - u )_{j}(t_j) , \jump{z_h}_{j}  )_{\Omega_h} \right\vert
		\\
		&= \left\vert \sum_{j=1}^k (M^\half (\pi_h u - u )_{j}(t_j) , M^\half \jump{z_h}_{j}  )_{\Omega_h}\right\vert 
		\\
		%&\leq \sum_{j=1}^k \| M^\half (\pi_h u - u )_{j}(t_j) \|_{L^2(\Omega_h)} \| M^\half \jump{z_h}_{j} \|_{L^2(\Omega_h)} 
		%\\
		&\leq \sqrt{2} \left( \sum_{j=1}^k \| M^\half (\pi_h u - u )_{j}(t_j) \|^2_{L^2(\Omega_h)} \right)^\half \left( \half \sum_{j=1}^k \| M^\half \jump{z_h}_{j} \|^2_{L^2(\Omega_h)} \right)^\half 
		\\
		&\leq \frac{2 \sqrt{2}}{c_\varrho} \sum_{j=1}^k \| M^\half (\pi_h u - u )_{j}(t_j) \|^2_{L^2(\Omega_h)} + \frac{c_\varrho}{8}  \| z_h \|^2_{Z_h} ,
	\end{split}
	\]
	cf. \cite[Section 6.1]{dolejsi_discontinuous_2015}.
	For $a_h$, we have for all $z_h \in Z_h$ using Hölder's inequality, Lemma \ref{le:upwind_symetric},  discrete Poincaré inequality \eqref{eq:poincare1}, and relation \eqref{eq:upwindflux_assump}
	\[
	\begin{split}
	  \vert  \bilinspacesf( \pi_h u - u , \numpvtest)  \vert 
		&\leq   \int_0^T \!\!\sum_{\element \in \allelements} \! \left(   \| \A (\pi_h u - u) \|_{L^2(K)} \|\numpvtest \|_{L^2(\element)} + \sum_{\face \in \face_\element} \| { \pi_h u - u } \|_{L^2(\face)} \| \Aup \jump{\numpvtest}_{\face, \element} \|_{L^2(\face)}  \right) \wrtdt
		%	\\
		%	&\leq   \int_0^T    \| \A (\pi_h u - u) \|_{L^2(\Omega_h)} \| \numpvtest \|_{L^2(\Omega_h)} + \left( \sum_{\element \in \allelements} \sum_{\face \in \face_\element} \| \pi_h u - u\|_{L^2(\face)}^2 \right)^\half \left( \sum_{\element \in \allelements} \sum_{\face \in \face_\element}\| \Aup \jump{ \numpvtest}_{\face, \element} \|_{L^2(\face)}^2 \right)^\half  \wrtdt
		\\
		&\leq \int_0^T   \|  \jacobian (\pi_h u - u) \|_{L^2(\Omega_h)} \|\numpvtest \|_{L^2(\Omega_h)} + 2 C_A \| \pi_h u - u \|_{L^2(\partial \Omega_h)} 
		\| A_\norvec \jump{\numpvtest} \|_{L^2(\partial \Omega_h)}  \wrtdt \\
		&\leq \int_0^T  \frac{2c_\textrm{P} D_{\max} }{c_\varrho c_{\iptheta}}  \disnorm{ {\pi_h u - u} }_h^2 + \frac{c_\varrho c_{\iptheta}}{8}  \disnorm{z_h}_h^2 + \frac{4 C_A}{c_\varrho c_A} \| \pi_h u - u \|_{L^2(\partial \Omega_h)}^2 \wrtdt + \frac{c_\varrho}{8}  \| z_h \|^2_{Z_h}    .
	\end{split}
	\]
	%with $D_{\max}:=\max\{ \parb^{-\half}, \parf^{-\half}\}$.
	For $d_h$, we have for all $z_h \in Z_h$ using Hölder's inequality and Young's inequality
	\[
	\begin{split}
		| \bilinspaceip(\pi_h u - u , \numpvtest) | &\leq \int_0^T \| \D^\half \jacobian (\pi_h u - u) \|_{L^2(\Omega_h)} \| D^\half \jacobian \numpvtest \|_{L^2(\numspacedomain)} + | \bilinspaceipbdr(\pi_h u - u, \numpvtest) | \wrtdt  
		\\
		&\leq \int_0^T \disnorm{\pi_h u - u}_h  \disnorm{z_h}_h + | \bilinspaceipbdr(\pi_h u - u, \numpvtest) | \wrtdt
		\\
		&\leq\int_0^T  \frac{2 }{ c_\varrho c_{\iptheta}} \disnorm{ \pi_h u - u}_h^2 + \frac{c_\varrho c_{\iptheta}}{8}  \disnorm{z_h}_h^2 + | \bilinspaceipbdr(\pi_h u - u, \numpvtest) | \wrtdt,
	\end{split}
	\]
	while for $C_\textrm{ip}>0$ independent of $h$
	\[
	\begin{split}
		| \bilinspaceipbdr(\pi_h u - u, \numpvtest) | \leq C_\textrm{ip} D_{\max}^2 \disnorm{\pi_h u - u }_{h} \disnorm{ z_h }_h \leq \frac{2 C_\textrm{ip} D_{\max}^2}{c_\varrho c_{\iptheta}}  \disnorm{\pi_h u - u }_{h}^2 + \frac{c_\varrho  c_{\iptheta}}{8 } \disnorm{ z_h }_h^2
	\end{split},
	\]
	which can be proven by combining \cite[Lemma 4.16]{dipietro2012} and  \cite[Lemma 4.20]{dipietro2012}.
	Finally, for $n'_h$, we have for all $z_h \in Z_h$ using estimates as in \eqref{eq:coersive_n}, $\max_{[0,T]} \disnorm{\tild{u}_h }_{h} \leq \varrho$, and Young's inequality
	\[  \label{eq:test1}
	%\begin{split}
	| n'_h(\pi_h u - u,z_h;\tild{u}_h) | 
	%&\leq \mu \int_0^T \sum_{\element \in \allelements} \| \jacobian (\pi_h u - u) \|_{L^2(\element)} \| z_h \|_{L^4(\element)} \| \tild{u}_h  \|_{L^4(\element)} +  \| \pi_h u - u\|_{L^4(\element)} \| z_h \|_{L^4(\element)} \| \jacobian \tild{u}_h \|_{L^2(\element)} \wrtdt \\
	%& \leq C_n \| \tild{u}_h \|_{L^\infty(0,T;H^1(\domain_h))} \int_0^T \|  u_h\|_{H^1(\domain_h)}^2 \wrtdt \\
	%& \leq \mu \int_0^T  c_S^2 \|{ \jacobian (\pi_h u - u)}\|_{L^2(\Omega_h)} \disnorm{ z_h }_h  \disnorm{ \tild{u}_h }_h + c_S D_{\max} \| \pi_h u - u \|_{L^4(\Omega_h)} \disnorm{ z_h }_h  \disnorm{ \tild{u}_h }_h   \wrtdt \\
	%& \leq \mu \varrho \left(  c_S^2 C_d \|{ \jacobian (\pi_h u - u)}\|_{L^2(0,T;L^2(\Omega_h)}  + c_S D_{\max} \| \pi_h u - u \|_{L^2(0,T;L^4(\Omega_h)} \right) \left(\int_0^T \disnorm{ z_h }_h^2 \wrtdt \right)^\half \\
	\leq \int_0^T \frac{2 C_n \varrho}{c_\varrho  c_{\iptheta}} \disnorm{\pi_h u - u}^2_h + \frac{c_\varrho  c_{\iptheta}}{8}  \disnorm{ z_h }_h^2 \wrtdt . 
	%\end{split}
	\]
	Choosing $z_h = u_h - \pi_h u \in Z_h$, and absorbing the $\| \cdot \|_{Z_h}$ terms into the left hand side of \eqref{eq:error_estimate1}, we obtain
	\[
	\begin{split}
	  \frac{c_\varrho}{4} \| u_h - \pi_h u \|_{Z_h}^2 
		&\leq C_b \left( \sum_{j=1}^k \! \| \M^\half ( \pi_h u - u  )_{j}(t_j) \|^2_{L^2(\Omega_h)} +  \int_0^T \| \pi_h u - u \|_{L^2(\partial \Omega_h)}^2 + \disnorm{ \pi_h u - u }_h^2 \wrtdt \right)  +\Rem
	\end{split}
	\]
	with $C_b:= 2 c_{\varrho}^{-1} \max\{ \sqrt{2}, \frac{c_\textrm{P} D_{\max}}{c_{\iptheta}}, \frac{2 C_A}{c_A}, \frac{1}{c_{\iptheta}}, \frac{C_\textrm{ip} D_{\max}^2}{c_{\iptheta}}, \frac{C_n \varrho}{c_{\iptheta}} \} >0$.
	Since
	\[
	\begin{split}
		\Rem  
		%&\leq \| \pi_h u - u \|_{Z_h} \| \ell_h - \ell \|_{Z_h'} + C_n \| u \|_{U} \|u_h - \pi_h u \|_{Z_h} \| \tild{u}_h - \tild{u} \|_{Z_h} \\
		\leq 
		\frac{c_\varrho}{16} \| \pi_h u - u \|_{Z_h}^2 + \frac{4}{c_{\varrho}} \| \ell_h - \ell \|^2_{Z_h'} + \frac{c_\varrho}{16} \|u_h - \pi_h u \|_{Z_h}^2 + \frac{4  C_n c_{\iptheta}  \max_{[0,T]}\disnorm{ u }_{h} }{c_{\varrho} }  \disnorm{ \tild{u}_h - \tild{u} }_{\star, h}^2,
	\end{split}
	\]
	using Young's inequality and \eqref{eq:coersive_n}, we obtain the result with
        $C_R:= \frac{32}{c_\varrho^2} \max \{ 1, {C_n c_{\iptheta} \max_{[0,T]}\disnorm{ u }_{h} } \}$
        and
        $C_Z:=\frac{8 C_b}{c_{\varrho}}$.
\end{proof}
\begin{remark}
	For matters of readability, we use $\disnorm{\cdot}_h$ to estimate $\| \jacobian (\pi_h u - u)\|_{L^2}$ and $\| \pi_h u - u \|_{L^4}$.  
	In principle, one does not have to do this and one would obtain slightly better constants $C_P, C_{p'}$ in the resulting error estimate cf. Theorem \ref{th:convergence}. However, the overall scaling with $h$ in the error estimate is not affected.  
\end{remark}
\begin{remark} \label{re:C_U}
	Note that $\max_{t\in [0,T]} \disnorm{u(t)}_h$ is uniformly bounded in $h$ for $u \in U$. This follows from the Sobolev embedding
	$ L^\infty(0,T;H^1(\Omega)) \embed L^2(0,T;H^2(\Omega)) \cap H^1(0,T;L^2(\Omega)) $, see \cite[Sect.~5.9, Thm.~4]{Evans:2010} and the fact that $\disnorm{u}_h \leq C_U \| u \|_{H^1(\Omega)}$ for some $C_U>0$ independent of $h$.
\end{remark}
To complete the convergence analysis, we estimate the projection errors of $\pi_h$ in terms of $h$. For this, we define the following semi-norms
\[
\begin{split}
	| u |_{H^s(\Omega)} &= \left( \int_\Omega \sum_s | \partial_x^s u |^2 \, \mathrm{d}x \right)^{\frac{1}{2}} , \\
	|u |_{H^p(0,T;H^s(\Omega))} &= \left( \int_0^T| \dt^p u |^2_{H^s(\Omega)} \wrtdt \right)^{\frac{1}{2}} ,\\
	|u |_{C(0,T;H^s(\Omega))} &= \max_{[0,T]} | u |_{H^s(\Omega)} , \\
	| u ||_{H^{p}(0,T;H^1(\Omega))} &= \left( |u|^2_{H^p(0,T;L^2(\Omega))} + |u|^2_{H^p(0,T;H^1(\Omega))}  \right)^\half,
\end{split}
\]
where $\partial_x^s$ stands for spatial partial derivatives of order $s \in \N$. The next lemma mainly uses results from \cite[Section 6]{dolejsi_discontinuous_2015}. In the following, we set $ \vartheta = 1$ if for the Dirichlet data with global minimal time degree~$p$ we have
\[ \label{eq:dirdata}
u_\mathrm{D}(x,t ) = \sum_{j=0}^p \Psi_j(x) t^j 
\]  %norm is also semi norm
with $\Psi_j \in H^{s - \half}(\dir;\R^{1+d})$, $s \geq 1$ and $\vartheta=0$ otherwise.

\clearpage

\begin{lemma} \label{le:convergence_2}
	Let 
	\[ \label{eq:error_regularity}
	 V := H^{p+1}(0,T;H^1(\Omega;\R^{1+d})) \cap W^{p+1, \infty}(0,T;L^2(\Omega;\R^{1+d}) ) \cap C([0,T];H^s(\Omega;\R^{1+d}) )
	\]
	with $p=\min_{R \in \spacetime_h}p_R$ the global minimal time degree, $q=\min_{R \in \spacetime_h}q_R$ the global minimal space degree, $\nu=\min\{q+1, s\}$ with $s \geq 1$, $\omega \in [1,\infty)$ the constant from \eqref{eq:mesh_balance} and $\vartheta \in \{0 , 1\}$ the constant determined by \eqref{eq:dirdata}. Then, the following projection error estimates hold for $u \in V$
%	\[ 
	\begin{align}
	\label{eq:proj_error_timejump}
	\sum_{j=0}^k \| \M^{\frac{1}{2}} \jump{\pi_h u - u}_{j} \|_{L^2(\Omega_h)}^2
        &\leq C_1 \left( h^{2 \nu - \omega} | u |^2_{C(0,T;H^\nu(\Omega))} + h^{2( p + 1)- \omega} | u |^2_{W^{p+1, \infty}(0,T;L^2(\Omega))}  \right)
        \,,
%	\]
	%
\\
	%\[ 
	\label{eq:proj_error_timestep}
	\sum_{j=1}^k \! \| M^\half ( \pi_h u - u  )_{j}(t_j) \|^2_{L^2(\Omega_h)}
        &\leq C_2 h^{2 \nu - \omega } | u  |_{C(0,T;H^\nu(\Omega))}^2
        \,,        
	%\]
	\\
	%and
	%\[ 
	\label{eq:proj_error_disnorm}
	\int_0^T \disnorm{ \pi_h u - u}_h^2 \wrtdt 
        &\leq C_3 \left( h^{2(\nu-1)} | u |^2_{C(0,T;H^\nu(\Omega))} + h^{2 (p + \vartheta)} | u ||^2_{H^{p+1}(0,T;H^1(\Omega))}  \right)
        \,,
	%\]
	\\
	%
%	\[ 
\label{eq:proj_error_boundary}
\int_0^T \| \pi_h u - u \|_{L^2(\partial \Omega_h)}^2 \wrtdt
&\leq C_4 \left(  h^{2 (\nu - \half)} |u|^2_{C(0,T;H^\nu(\Omega))} + h^{2(p+\half)} |u|^2_{H^{p+1}(0,T;L^2(\Omega))} \right) 
        \,,
%	\]
	%
	\\
%	\[
	 \label{eq:proj_error_spacejump}
	 \int_0^T c_{\A} \| A_\norvec \jump{\pi_h u - u} \|_{L^2(\partial \Omega_h)}^2
         &\leq  C_5 \left(  h^{2 (\nu - \half)} |u|^2_{C(0,T;H^\nu(\Omega))} + h^{2(p+\half)} |u|^2_{H^{p+1}(0,T;L^2(\Omega))} \right) 
        \,,
%	\]
		\end{align}
	%	\]
	%and
	where $C_1$ to $C_5$ are positive constants independent of $h$.
\end{lemma}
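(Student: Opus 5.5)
The plan is to split the space-time projection error into a temporal part and a spatial part, and to estimate each piece cell by cell using the standard one-dimensional and spatial approximation results in \cite[Section 6]{dolejsi_discontinuous_2015}. On every slab $I_j$ write $\pi_h u - u = (\pi_h u - \Pi_h u) + (\Pi_h u - u)$. The second difference is a pure spatial $L^2$-projection error. For $\nu=\min\{q+1,s\}$, local polynomial approximation, the trace inequality and the broken $H^1$ estimate give, for every fixed $t$,
\[ \| \Pi_h u - u \|_{L^2(\Omega_h)} \lesssim h^\nu |u|_{H^\nu(\Omega)}, \qquad \| \Pi_h u - u \|_{L^2(\partial\Omega_h)} \lesssim h^{\nu-\half} |u|_{H^\nu(\Omega)}, \qquad \disnorm{\Pi_h u - u}_h \lesssim h^{\nu-1} |u|_{H^\nu(\Omega)}. \]
The penalty weight $\sigma_{F,h}=C_\sigma/h_F$ costs exactly the half power of $h$ in the last estimate. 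The first difference $\pi_h u - \Pi_h u = \pi_h(\Pi_h u) - \Pi_h u$ is a temporal projection error only. Properties (ii)–(iii) of $\pi_h$ make it, slabwise, the one-dimensional Radau-type projection in time applied to $\Pi_h u$. It therefore satisfies $\|\cdot\|_{L^2(I_j;X)} \lesssim \tau^{p+1}|\Pi_h u|_{H^{p+1}(I_j;X)}$, with $X=L^2$ or $H^1$, and the stability of $\Pi_h$ lets us replace $\Pi_h u$ by $u$.

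With this splitting, \eqref{eq:proj_error_timestep} is immediate. By property (ii) the temporal part vanishes at $t_j$, so only $k$ spatial errors remain, each $\lesssim h^{2\nu}|u|^2_{C(0,T;H^\nu)}$. Since $k\le T/(C_{sr}\tau)$ and \eqref{eq:mesh_balance} gives $\tau^{-1}\le c_r h^{-\omega}$, the bound $h^{2\nu-\omega}$ follows. For \eqref{eq:proj_error_timejump}, the jump $\jump{\pi_h u-u}_{j-1} = (\pi_h u - u)_{j}(t_{j-1}) - (\pi_h u-u)_{j-1}(t_{j-1})$ is treated the same way. The left trace is again purely spatial. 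The right trace contains in addition the temporal error at the left endpoint, which by the one-dimensional estimate is $\lesssim \tau^{p+1}\|\partial_t^{p+1}u\|_{L^\infty(I_j;L^2)}$. Summing over $k\sim\tau^{-1}$ slabs and using $\tau\le h$, $\tau^{-1}\lesssim h^{-\omega}$ gives $h^{2(p+1)-\omega}|u|^2_{W^{p+1,\infty}(0,T;L^2)}$. The endpoint terms $j=0,k$ are handled identically, using the data conventions.

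The integrated estimates \eqref{eq:proj_error_disnorm}–\eqref{eq:proj_error_spacejump} come from integrating the spatial bounds over $(0,T)$ and adding the temporal part, estimated in $L^2(0,T)$. For the temporal part, the face $L^2$ norm is handled by a discrete trace inequality, which loses $h^{-1/2}$, combined with $\tau^{p+1}\le h^{p+1}$; this gives $h^{2(p+\half)}$. For the $\disnorm{\cdot}_h$ norm, the inverse-type loss of one power of $h$ from the penalty and jump terms gives $h^{2p}$. When $\vartheta=1$, the Dirichlet data are polynomial of degree $p$ in time, so the temporal projection reproduces them on boundary faces. The boundary jump contributions of the temporal part then vanish and only interior jumps remain. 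I expect these can be controlled with the $H^1$ seminorm without the penalty loss, recovering the full $h^{2(p+1)}$. Finally, \eqref{eq:proj_error_spacejump} follows from \eqref{eq:proj_error_boundary}, since $|A_\norvec|\le 1$ and each jump is bounded by the traces from the two neighbouring cells.

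The main obstacle is the $\vartheta$-dependent estimate \eqref{eq:proj_error_disnorm}, specifically the justification of the gain $h^{2\vartheta}$. It requires checking that the temporal projection commutes with traces on Dirichlet faces and with the broken gradient. I would first try to verify this commutation directly from the tensor-product structure and the definition (iii). Tracking the constants' independence of $h$ under variable degrees $p_R,q_R$ is routine once only the minimal degrees $p,q$ are used.
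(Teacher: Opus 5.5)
Your treatment of \eqref{eq:proj_error_timestep}, \eqref{eq:proj_error_timejump}, \eqref{eq:proj_error_boundary}, \eqref{eq:proj_error_spacejump} and of the case $\vartheta=0$ of \eqref{eq:proj_error_disnorm} is sound. It is a self-contained version of what the paper imports from \cite{dolejsi_discontinuous_2015} before summing over $k+1\lesssim h^{-\omega}$ slabs. Applying the discrete trace inequality only to the spatially polynomial part is also cleaner than the paper's direct use of it on $\pi_h u-u$.

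The genuine gap is the factor $h^{2\vartheta}$ in \eqref{eq:proj_error_disnorm}, and the mechanism you sketch cannot supply it. Write $\pi_\tau$ for the slabwise Radau-type time projection, so your temporal part is $w_\tau=(\pi_\tau-I)\Pi_h u$. On a Dirichlet face, commuting $\pi_\tau$ with traces only gives $w_\tau|_F=(\pi_\tau-I)(\Pi_h u|_F)$. But $\Pi_h u|_F\neq u_{\mathrm D}$, because it depends on $u$ inside the cell. For example, take $u=g(t)\phi(x)$ with $\phi|_{\Gamma_{\mathrm D}}=0$ (so $u_{\mathrm D}=0$ and $\vartheta=1$) and $g\notin\mathcal{P}^p$. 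Then $w_\tau|_F=(\pi_\tau g-g)\,\Pi_h\phi|_F$, which is in general nonzero. The interior jumps of $w_\tau$ are not controlled by its broken gradient either: piecewise constants have zero broken gradient but nonzero jumps. With the weight $\sigma_{F,h}=C_\sigma/h_F$, any trace argument applied to $w_\tau$ alone yields $h^{-2}\|w_\tau\|^2_{L^2}\sim h^{-2}\tau^{2(p+1)}$. That is exactly the $h^{2p}$ you want to improve, so checking commutation properties, as you plan, will not produce the gain.

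What is missing is the use of the exact solution's continuity and boundary values on faces. For the penalty terms, regroup as $\pi_h u-u=\pi_\tau(\Pi_h u-u)+(\pi_\tau u-u)$.
\begin{itemize}
\item The first term is bounded by your spatial estimates via the $L^\infty(I_j)$-stability of $\pi_\tau$. It contributes only $h^{2(\nu-1)}|u|^2_{C(0,T;H^\nu(\Omega))}$.
\item The second term has zero jumps on interior faces, provided neighbouring cells carry the same time degree; otherwise $(\pi_\tau^{K_F}-\pi_\tau^{K})u$ reappears. On Dirichlet faces it equals $(\pi_\tau-I)u_{\mathrm D}$, which vanishes when $\vartheta=1$. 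Its gradient is $O(\tau^{p+1})$.
\end{itemize}
Note also that the velocity penalty in $\disnorm{\cdot}_h$ runs over Neumann faces as well. There this argument leaves $(\pi_\tau-I)v_N$, so for $\Gamma_{\mathrm N}\neq\emptyset$ the gain also needs $v_N$ polynomial in time.

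For comparison, the paper does not derive the $\vartheta$-gain at all. It takes the slabwise bound with $\tau_j^{2(p+\vartheta)}$ from \cite[Lemma 6.22, Eq.~(6.134)]{dolejsi_discontinuous_2015} and only sums over $j$ using $c_r\tau\le h$.
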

\begin{proof}
  From \cite[Lemma 6.22, Eq. (6.136)]{dolejsi_discontinuous_2015}
  we \corr{get, using \eqref{eq:mesh_balance} and taking $\sup_{[0,T]}$,}
	\[ \label{eq:error-projL2}
	\| \pi_h u - u \|_{C(0,T;L^2(\Omega_h))}^2 \leq 2 C_{\pi}^2 \left( h^{2\nu} |u|^2_{C(0,T;H^\nu(\Omega))} + c_r^{-2(p+1)} h^{2(p+1)} | u|^2_{W^{p+1, \infty}(0,T;L^2(\Omega))} \right)	
	\]
	with $C_{\pi}>0$ independent of $h$.
	To derive projection error estimate \eqref{eq:proj_error_timejump}, we use relation \eqref{eq:mesh_balance} together with $(k+1) \leq C_{s} \tau^{-1}$ with $C_s>0$, which holds due to shape regularity, so that %\textcolor{red}{WRONG}
	\[
	\sum_{j=0}^k \| \jump{ \pi_h u- u}_{j} \|_{L^2(\Omega_h)}^2 \leq 2 (k+1) \| \pi_h u - u   \|_{C(0,T;L^2(\Omega) )}^2 \leq 2 C_s c_r h^{-\omega} \|  \pi_h u - u \|_{C(0,T;L^2(\Omega))}^2.
	\]
	Combining this with \eqref{eq:error-projL2}, we obtain \eqref{eq:proj_error_timejump} with $C_1 = 4 C_s c_r C_\pi^2 \max\{  1, c_r^{-2(p+1)} \} \max\{\alpha^{-\half}, \varepsilon^{-\half} \}.$
	
	According to \cite[Lemma 6.17, Eq. 107]{dolejsi_discontinuous_2015}, we have for $j=1,\dots, k$
	\[
	\| ( \pi_h u - u  )_j(t_j) \|^2_{L^2(\Omega_h)} \leq C_l h^{ 2 \nu } | u_{j}(t_j)  |_{H^\nu(\Omega)}^2 
	\]
	with $C_{l}>0$ independent of $h$. Summing over $j$, taking supremum over $j$, 
	%, multiplying by $M^\half$ 
	using $k \leq C_s \tau^{-1}$, and mesh balance \eqref{eq:mesh_balance} gives  \eqref{eq:proj_error_timestep} with $C_2= C_l C_s c_r$.
	
	Due to \cite[Lemma 6.22, Eq. 134]{dolejsi_discontinuous_2015}, we have for $j=1, \dots, k$
	\[
	\int_{I_j} \disnorm{ u - \pi_h u}_h^2 \wrtdt \leq C_{\vert\!\vert\!\vert\!} \left( h^{2(\nu-1)} | u |^2_{L^2(I_j;H^\nu(\Omega))} + {\tau_j}^{2 ( p + \vartheta)} | u ||^2_{H^{p+1}(I_j;H^1(\Omega))}  \right)
	\]
	with $C_{\vert\!\vert\!\vert\!}>0$ independent of $h$.
	From there, equation \eqref{eq:proj_error_disnorm} follows by summing over $j$, using \eqref{eq:mesh_balance} $c_r \tau \leq h$, and 
%	\[ \label{eq:time-L2-Linf}
$	|u|^2_{L^2(0,T;H^\nu(\Omega))} \leq T |u|^2_{C(0,T;H^\nu(\Omega))}$
%	\]
	with $C_3 = C_{\vert\!\vert\!\vert\!} T \max\{1, c_r^{-2(p + \vartheta)}\}$.
	
	By the discrete trace inequality, cf. \cite[Remark 1.47]{dipietro2012}, we have
	\[
	\| \pi_h u - u \|_{L^2(\boundary \Omega_h)}^2 
	\leq \sum_{\element \in \allelements} \| u - \pi_h u \|^2_{L^2(\boundary \element)} 
	\leq C_\textrm{tr} h^{-1} \sum_{\element \in \allelements} \| u - \pi_h u \|^2_{L^2(\element)} 
	\]
	with $C_\textrm{tr}>0$ independent of $h$.
	Combining this with relation \eqref{eq:error-projL2} gives
	\[ 
	\int_0^T \| u - \pi_h u \|^2_{L^2(\partial \Omega)} \wrtdt \leq 2 h^{-1} C_\textrm{tr} C_{\pi}^2 \left(  h^{2 \nu} |u|^2_{C(0,T;H^{\newcorr{\nu}})} + h^{2(p+1)} |u|^2_{W^{p+1,\infty}(0,T;L^2(\Omega))} \right) ,
	\]
	which yields equation \eqref{eq:proj_error_boundary} with $C_4=2 C_\textrm{tr} C_\pi^2 \max\{ 1, c_r^{-2(p+1)} \} $.
	
	Similarly, equation \eqref{eq:proj_error_spacejump} with $C_5 = 2 C_4$ can be obtained, since 
	\[
	\| A_\norvec \jump{\pi_h u - u} \|^2_{L^2(\partial \Omega_h)} \leq 2 \| \pi_h u - u \|^2_{L^2(\partial \Omega)} .
	\]	
\end{proof}
The following theorem provides a complete error estimate for the linearized system in terms of $h$.
\begin{theorem} \label{th:convergence}
	Let $u \in V$ with $V$ from \eqref{eq:error_regularity} be a solution to the continuous linear system \eqref{eq:linearcont} with $\tild{u} \in U$ and let $u_h\in Z_h$ be a solution to the discrete linear system \eqref{eq:full_scheme} with $\tild{u}_h \in Z_h$ such that \eqref{eq:smallness_cond} holds with $\varrho>0$ as in Lemma \ref{lem:linear_wellposed}.
	
	Furthermore, 
	let $p=\min_{R \in \spacetime_h}p_R$ be the global minimal time degree, $q=\min_{R \in \spacetime_h}q_R$ be the global minimal space degree, $\omega \in [1,\infty)$ the constant from \eqref{eq:mesh_balance}, $\vartheta \in \{0 , 1\}$ the constant determined by \eqref{eq:dirdata}, and $\xi=\min\{p + \vartheta, p + 1 - \frac{\omega}{2}, \nu - \frac{\omega}{2}, \nu - 1\}$ with $\nu=\min\{q+1, s\}$, $s \geq 2$. Then
	\[
	\begin{split}
		\| u_h - u \|_{Z_h} \leq  C_{P}  h^{\xi}  + 
		C_{Q} \left( \| \ell_h - \ell \|_{Z_h'} + \disnorm{ \tild{u}_h - \tild{u} }_{\star, h}     \right)
	\end{split}
	\]
	with $C_{P},C_{Q}>0$ independent of $h$.
\end{theorem}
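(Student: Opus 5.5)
We split the error with the space-time projection $\pi_h$ from Lemma \ref{le:convergence_1}:
\[
\| u_h - u \|_{Z_h} \leq \| u_h - \pi_h u \|_{Z_h} + \| \pi_h u - u \|_{Z_h},
\]
which is legitimate because $\| \cdot \|_{Z_h}$ is defined by the same expression \eqref{eq:DGNorm} for functions in $V$ (all traces and broken gradients exist). The discrete part is bounded by Lemma \ref{le:convergence_1}, using $\sqrt{a+b} \leq \sqrt a + \sqrt b$ to separate the data and linearization terms. These give the contribution $C_Q ( \| \ell_h - \ell \|_{Z_h'} + \disnorm{ \tild{u}_h - \tild{u} }_{\star, h} )$ with $C_Q := \sqrt{C_R}$. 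In $C_R$, the factor $\max_{[0,T]}\disnorm{u}_h$ is bounded independently of $h$ by Remark \ref{re:C_U}. This is where $u \in U$ enters: we take $V \subset U$, which holds since $s \geq 2$ gives $C([0,T];H^2)$ and $p \geq 1$ gives $H^1$ in time.

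Both remaining pieces are sums of projection-error quantities, and we bound each by Lemma \ref{le:convergence_2}. For $\| \pi_h u - u \|_{Z_h}^2$ we use \eqref{eq:proj_error_timejump}, \eqref{eq:proj_error_spacejump} and \eqref{eq:proj_error_disnorm}, the last multiplied by $c_\Theta$. For the bracket in Lemma \ref{le:convergence_1} we use \eqref{eq:proj_error_timestep}, \eqref{eq:proj_error_boundary} and \eqref{eq:proj_error_disnorm}. The resulting powers of $h$ are
\[
2\nu-\omega,\quad 2(p+1)-\omega,\quad 2(\nu-1),\quad 2(p+\vartheta),\quad 2\nu-1,\quad 2p+1 .
\]
Since $h<1$, each power satisfies $h^{2a} \leq h^{2\xi}$ as long as $a \geq \xi$. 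The first four exponents are exactly twice the four entries in the minimum defining $\xi$. The last two are dominated because $\nu-\tfrac12 \geq \nu-1$ and $p+\tfrac12 \geq p+1-\tfrac{\omega}{2}$, the latter since $\omega \geq 1$.

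Summing and taking square roots gives $C_P := C\,( |u|_{C(0,T;H^\nu(\Omega))} + |u|_{W^{p+1,\infty}(0,T;L^2(\Omega))} + | u ||_{H^{p+1}(0,T;H^1(\Omega))} )$, with $C$ collecting $C_1,\dots,C_5,C_Z,c_\Theta$. This is finite for $u \in V$ and independent of $h$.

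The main point to check is the exponent bookkeeping. In particular, the $h^{-\omega}$ loss in the time-jump terms is already encoded in $\xi$. Otherwise, the only genuine issue is the $h$-independence of $C_R$, which Remark \ref{re:C_U} handles.
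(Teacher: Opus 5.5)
Your proposal is correct and follows the paper's proof essentially step by step. It uses the same triangle inequality through $\pi_h u$, bounds $\|\pi_h u - u\|_{Z_h}$ via \eqref{eq:proj_error_timejump}, \eqref{eq:proj_error_disnorm}, \eqref{eq:proj_error_spacejump}, and bounds $\|u_h - \pi_h u\|_{Z_h}$ via Lemma \ref{le:convergence_1} combined with \eqref{eq:proj_error_timestep}, \eqref{eq:proj_error_disnorm}, \eqref{eq:proj_error_boundary}. Your explicit exponent check (notably $p+\tfrac12 \geq p+1-\tfrac{\omega}{2}$ from $\omega \geq 1$) and your remarks on $V \subset U$ and the $h$-independence of $C_R$ via Remark \ref{re:C_U} simply spell out what the paper leaves implicit.
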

\begin{proof}
	By the triangle inequality, we have
	\[
	\| u_h - u \|_{Z_h} \leq \| u_h - \pi_h u \|_{Z_h} + \| \pi_h u - u \|_{Z_h}.
	\]
	From Lemma \ref{le:convergence_2} we obtain, when taking square roots,
	\[
	\| \pi_h u - u \|_{Z_h} \leq C_6  h^{\xi} 
	\]
	using the projection estimates \eqref{eq:proj_error_timejump}, \eqref{eq:proj_error_disnorm}, \eqref{eq:proj_error_spacejump}.
	%with \eqref{eq:time-L2-Linf}.
	Combining Lemma \ref{le:convergence_1} with the projection estimates \eqref{eq:proj_error_timestep}, \eqref{eq:proj_error_disnorm}, \eqref{eq:proj_error_boundary} from Lemma \ref{le:convergence_2} we obtain, when taking square roots,
	\[
	\| \pi_h u - u_h \|_{Z_h} \leq C_7  h^{\xi} + C_8 \left( \| \ell_h - \ell \|_{Z_h'} +  \disnorm{ \tild{u}_h - \tild{u} }_{\star, h}     \right),
	\]
	yielding the error estimate for the linear problem, since the constants $C_6,C_7,C_8>0$ are only depending on $\| u \|_V, C_Z, C_R$ and $C_i$ with $i =1, \ldots, 5$.
\end{proof}
\begin{remark}
	We note that $U \subset V$ for $p=0$. If $\vartheta= \omega = 1$, Theorem \ref{th:convergence} yields convergence in $h$
	if 
	$$u \in H^1(0,T;H^1(\Omega;\R^{1+d})) \cap W^{1,\infty}(0,T;L^2(\Omega;\R^{1+d})) \cap C(0,T;H^2(\Omega;\R^{1+d}))$$
	choosing $p=0$, $q=1$ and $s=2$.
A convergence analysis for less regular solutions is not contained in this paper.
	%This is archived if $f \in H^1(0,T;H^1)$ (for the heat equation with regular enough domain). Make this into a remark?
\end{remark}
\section{Nonlinear system}\label{se:nonlinear}
We are now in a position to apply a general existence and convergence result for Newton's method to the nonlinear discrete problem \eqref{eq:full_scheme_nonlinear}, namely, a generalized Newton-Kantorovich Theorem, see \cite[Theorem 2.1]{Hernandez2001}. We restate the theorem for the reader's convenience. 
\begin{lemma}\label{NewtonKantorowich} 
	Let $X$ and $Y$ be Banach spaces and $\G:\mathcal{D}(\G)\subseteq X \to Y$. Suppose that on an open convex set $D_*\subseteq \mathcal{D}(\G)$, $\G$ is Fr\'{e}chet differentiable and $\G'$ is $\chi$-Hölder continuous in $D_*$ with  exponent $\chi \in (0,1]$ and constant $\lambda>0$, i.e.,
	\[
	\|\G'_{u_1}-\G'_{u_2}\|_{\linearspace(X,Y)}\leq \lambda \|u_1-u_2\|_{X}^\chi , \quad u_1, \, u_2\in D_*.
	\]
	For some $u_*\in D_*$, assume $\Gamma_*:=\corr{(\G'_{u_*})^{-1}}$ is defined on all of $\, Y$ and  
	\begin{equation}\label{betaK}
		\upsilon = \kappa \lambda \phi^\chi \leq \chi_0 \ \text{ with } \kappa = \|\Gamma_*\|_{\linearspace(Y,X)}, \quad \phi = \|\Gamma_*\G(u_*) \|_X,
	\end{equation}
	where $\chi_0 \in (0, \half]$ is the unique root of $ w \mapsto (1+ \chi)^\chi(1-w)^{1 + \chi} - w^\chi$. 
	Set 
	\begin{equation}\label{rstart_rstarstar}
		\varrho= \frac{ \phi (1+\chi)(1-\upsilon)}{(1+\chi)-(2 + \chi) \upsilon}, \qquad \tild{\varrho} = \phi \upsilon^{- \frac{1}{\chi}}
	\end{equation}
	and suppose that $B^X_{\varrho}(u_*)\subseteq D_*$. 
	Then, the Newton iterates 
	\begin{equation}\label{Newton}
		u^{(n+1)}=u^{(n)}-{\G'^{-1}_{u^{(n)}}}\G(u^{(n)}), \, n=0,1,2,\ldots 
	\end{equation}
	starting at $u^{(0)}:=u_*$ 
	are well-defined, lie in $B^X_{\varrho}(u_*)$ and converge to a solution of $ \G(u)=0$, which is unique in $B^X_{\tild{\varrho}}(u_*)\cap D_*$. Moreover, if strict inequality holds in \eqref{betaK}, the order of convergence is at least $1+\chi$.
\end{lemma}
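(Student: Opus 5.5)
The plan is the classical majorant argument for Kantorovich-type theorems, adapted to a $\chi$-Hölder continuous derivative. Throughout, $u^{(n)}$ denote the iterates \eqref{Newton}, $d_n := \|u^{(n+1)}-u^{(n)}\|_X$ and $\Gamma_n := (\G'_{u^{(n)}})^{-1}$. Two elementary estimates will be used repeatedly.

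\emph{(a) Banach perturbation lemma.} For $u \in B^X_{\varrho}(u_*)$ the Hölder condition gives
\[
\|\Gamma_*(\G'_u-\G'_{u_*})\|_{\linearspace(X,X)} \leq \kappa\lambda\|u-u_*\|_X^\chi \leq \kappa\lambda\varrho^\chi .
\]
If $\kappa\lambda\varrho^\chi<1$, then $\G'_u$ is invertible on all of $Y$, and
\[
\|(\G'_u)^{-1}\|_{\linearspace(Y,X)} \leq \frac{\kappa}{1-\kappa\lambda\|u-u_*\|_X^\chi}.
\]

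\emph{(b) Integral Taylor remainder.} Since $\G(u^{(n)})+\G'_{u^{(n)}}(u^{(n+1)}-u^{(n)})=0$ by \eqref{Newton},
\[
\G(u^{(n+1)}) = \int_0^1 \bigl(\G'_{u^{(n)}+s(u^{(n+1)}-u^{(n)})}-\G'_{u^{(n)}}\bigr)(u^{(n+1)}-u^{(n)})\,ds .
\]
Convexity of $D_*$ keeps the segment inside $D_*$, so the Hölder bound yields
\[
\|\G(u^{(n+1)})\|_Y \leq \frac{\lambda}{1+\chi}\, d_n^{1+\chi}.
\]

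\emph{Induction.} I will prove by induction that all iterates are well defined, stay in $B^X_\varrho(u_*)$, and satisfy $d_{n}\leq \theta^n \phi$ with
\[
\theta := \frac{\upsilon}{(1+\chi)(1-\upsilon)} .
\]
A direct computation gives
\[
\frac{\phi}{1-\theta} = \frac{\phi(1+\chi)(1-\upsilon)}{(1+\chi)-(2+\chi)\upsilon}=\varrho,
\]
which is exactly the radius in \eqref{rstart_rstarstar}. The telescoping bound $\|u^{(n)}-u_*\|_X \leq \sum_{m<n} d_m < \varrho$ then keeps every iterate in the ball. Moreover, $\upsilon\leq\chi_0\leq\half$ gives $\theta<1$. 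The step I expect to be the main obstacle is closing the recursion. Combining (a) and (b),
\[
d_{n+1} \leq \|\Gamma_{n+1}\|_{\linearspace(Y,X)}\,\|\G(u^{(n+1)})\|_Y \leq \frac{\kappa\lambda}{(1+\chi)\bigl(1-\kappa\lambda\|u^{(n+1)}-u_*\|_X^\chi\bigr)}\, d_n^{1+\chi}.
\]
The task is to show that the right-hand side is at most $\theta d_n$. This requires two controls:
\begin{itemize}
\item the factor $\kappa\lambda d_n^\chi$, which is bounded via $d_n\leq \theta^n \phi\leq\phi$ and hence $\kappa\lambda d_n^\chi \leq \upsilon$;
\item the denominator $1-\kappa\lambda\|u^{(n+1)}-u_*\|_X^\chi$, which must be kept above $1-\upsilon$ up to the $\theta$-dependent factor.
\end{itemize}
The defining equation $(1+\chi)^\chi(1-w)^{1+\chi}=w^\chi$ for $\chi_0$ is precisely the compatibility condition under which the enlarged distance $\|u^{(n+1)}-u_*\|_X\leq\varrho$ still keeps the Banach factor bounded as needed. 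I would first try the crude invariant $\kappa\lambda\varrho^\chi\leq\upsilon/\theta$ and check that it reduces algebraically to $\upsilon\leq\chi_0$. If it does not, I would fall back to Hernández's scalar majorizing sequence, $a_{n+1} = a_n f(a_n)^2 g(a_n)$-type, and compare $d_n$ with it. The base case is $d_0=\|\Gamma_*\G(u_*)\|_X=\phi$.

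\emph{Convergence, uniqueness and order.} Given the induction, $(u^{(n)})$ is Cauchy, so $u^{(n)}\to u\in \closure{B^X_\varrho(u_*)}$. By (b) and continuity of $\G$ (implied by Fréchet differentiability), $\G(u)=0$. For uniqueness, let $v\in B^X_{\tild\varrho}(u_*)\cap D_*$ with $\G(v)=0$. Writing
\[
v-u^{(n+1)}=\Gamma_n\int_0^1\bigl(\G'_{u^{(n)}+s(v-u^{(n)})}-\G'_{u^{(n)}}\bigr)(v-u^{(n)})\,ds
\]
gives $\|v-u^{(n+1)}\|_X\leq C\,\|v-u^{(n)}\|_X^{1+\chi}$. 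The choice $\tild\varrho=\phi\upsilon^{-1/\chi}$ is made so that $C\,\tild\varrho^{\chi}<1$, forcing $u^{(n)}\to v$ and hence $v=u$. Finally, if $\upsilon<\chi_0$ strictly, the Banach factors in the recursion stay uniformly bounded. The recursion then reads $d_{n+1}\leq C d_n^{1+\chi}$ with a uniform constant, which gives convergence of order at least $1+\chi$.
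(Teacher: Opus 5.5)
There is a genuine gap at the step you yourself flag as the main obstacle: the induction $d_{n+1}\le\theta d_n$ is never closed, and the first route you propose cannot close it. Suppose you bound $\|\Gamma_{n+1}\|$ by (a), i.e.\ by perturbing around $u_*$, and use $\kappa\lambda d_n^\chi\le\upsilon$. Then the required inequality
\[
\frac{\kappa\lambda d_n^\chi}{(1+\chi)\bigl(1-\kappa\lambda\|u^{(n+1)}-u_*\|_X^\chi\bigr)}\le\theta=\frac{\upsilon}{(1+\chi)(1-\upsilon)}
\]
is equivalent to $\|u^{(n+1)}-u_*\|_X\le\phi$. The ``crude invariant'' would therefore have to be $\kappa\lambda\varrho^\chi\le\upsilon$, i.e.\ $\varrho\le\phi$. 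This is false, since $\varrho=\phi/(1-\theta)>\phi$. Your reading of the equation defining $\chi_0$ as a compatibility condition on the distance to $u_*$ is also not where that equation comes from. Your fallback recursion $a_nf(a_n)^2g(a_n)$ has the wrong form as well.

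The paper does not prove the lemma; it quotes \cite[Theorem 2.1]{Hernandez2001}. The missing idea is the one used there: apply the Banach lemma around the \emph{previous iterate}, not around $u_*$.

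\emph{Step 1.} Set $a_n:=\lambda\|\Gamma_n\|\,d_n^\chi$. The Hölder bound gives $\|\Gamma_n(G'_{u^{(n+1)}}-G'_{u^{(n)}})\|\le a_n$. Hence $\|\Gamma_{n+1}\|\le f(a_n)\|\Gamma_n\|$ with $f(x)=(1-x)^{-1}$.

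\emph{Step 2.} Combining this with your (b) gives $d_{n+1}\le g(a_n)\,d_n$ with $g(x)=\frac{x}{(1+\chi)(1-x)}$. It then follows that $a_{n+1}\le a_nf(a_n)g(a_n)^\chi$.

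\emph{Step 3.} The condition $f(\upsilon)g(\upsilon)^\chi\le 1$ is literally $\upsilon^\chi\le(1+\chi)^\chi(1-\upsilon)^{1+\chi}$, i.e.\ $\upsilon\le\chi_0$. Since $f$ and $g$ are increasing, induction (starting from $a_0=\upsilon$) gives $a_n\le\upsilon$ for all $n$, hence $d_{n+1}\le g(\upsilon)\,d_n$. Your $\theta$ is exactly $g(\upsilon)$, and your identity $\varrho=\phi/(1-\theta)$ is correct, so the rest of your skeleton goes through.

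\emph{Order.} Under strict inequality, $f(\upsilon)g(\upsilon)^\chi<1$ and $a_{n+1}\le Ca_n^{1+\chi}$, which yields order $1+\chi$.

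\emph{Uniqueness.} Your sketch has a similar hole. The contraction $\|v-u^{(n+1)}\|_X\le\frac{\lambda\|\Gamma_n\|}{1+\chi}\|v-u^{(n)}\|_X^{1+\chi}$ needs a bound uniform in $n$. However, $\|\Gamma_n\|$ grows along the iteration, and $\tilde\varrho$ only controls the step $n=0$. The direct argument works instead. If $G(u)=G(v)=0$, then $0=\int_0^1G'_{u+s(v-u)}\,ds\,(v-u)$. The averaged operator is invertible by the Banach lemma around $u_*$, because $\kappa\lambda\int_0^1\bigl((1-s)\varrho+s\|v-u_*\|_X\bigr)^\chi ds<\kappa\lambda\tilde\varrho^\chi=1$. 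This step uses $\varrho\le\tilde\varrho$, which follows from $g(\upsilon)\le(1-\upsilon)^{1/\chi}$, a consequence of $\upsilon\le\chi_0$. This is exactly why $\tilde\varrho=\phi\upsilon^{-1/\chi}$.
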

Building upon the results of the previous section, we derive the following local well-posedness and convergence result for the nonlinear problem \eqref{eq:full_scheme_nonlinear}. In the proof, we use that $\| u \|_{Z_h} \leq C_U \| u \|_U$ for $u \in U$ and some $C_U>0$ independent of $h$ cf. Remark \ref{re:C_U}.
\begin{theorem} \label{th:nonlinear_wellposed}
	Let $u \in V$ with $V$ from \eqref{eq:error_regularity} be a solution to the continuous nonlinear system \eqref{eq:umodel} and assume that 
	\[ \label{eq:smallness_ass}
	\| u \|_U \leq \varrho_U, \qquad  \| \ell_h \|_{Z'_h} \leq \varrho_\ell ,
	\] 
	where $\varrho_U>0$ and $\varrho_\ell>0$ are assumed to be sufficiently small.
	
	Furthermore, let $p=\min_{R \in \spacetime_h} p_R$ be the minimal global time degree, $q=\min_{R \in \spacetime_h}q_R$ be the minimal global space degree, $\omega \in [1,\infty)$ the constant from \eqref{eq:mesh_balance}, $\vartheta \in \{0 , 1\}$ the constant determined by \eqref{eq:dirdata}, $\xi=\min\{p + \vartheta, p + 1 - \frac{\omega}{2}, \nu - \frac{\omega}{2}, \nu - 1\}$ with $\nu=\min\{q+1, s\}$, $s \geq 2$, and $\chi \in (0,1]$ such that $(1- \chi) \min\{ p + \vartheta, p+1-\frac{\omega}{2} \} \geq \frac{\omega}{2}$. We also assume that
	$ \min\{ p + \vartheta - \omega, p + 1 - \frac{3 \omega}{2}\} > 0$ and $\nu > \omega$.
	 For $\bar{h}>0$ define
	\[ D_{\bar{h},*} := \{ u_h \in Z_h : ( \bar{h}^{- \min\{ p + \vartheta, p+1-\frac{\omega}{2} \} } + \bar{h}^{- \min\{\nu - \frac{\omega}{2}, \nu - 1\} } ) \| u_h - u \|_{Z_h} 
	%+ h^{\half} \| u - u_h \|_{L^2(0,T;L^2(\partial \Omega_h))} + h^{\frac{\omega}{2}} \| \dt (u - u_h) \|_{L^2(0,T;L^2(\Omega))} 
	%< C_* \bar{h}^\xi \} 
	< C_* \}
	\]
	with $C_*>0$ independent of $\bar{h}$.
	Then, there exists a $\tild{h} < \bar{h}$ such that for all $h < \tild{h}$
		\[
		b_h(\numpv, \numpvtest) = \linfull(\numpvtest) \quad \forall \numpvtest \in Z_h
		\] 
		has a unique solution $u_h \in Z_h$
		%for all $$ \| f_h \|_{Q_h} + \| u_{0,h} \|_{\domain_h} + \| u_{D,h} \| + \| u_{N,h} \| \leq r .$$
		and ($1+\chi$)-order of convergence in Algorithm \ref{algo:newton} with initial guess $u_{h,*} \in D_{h,*}$ holds.	
	Moreover, we have the following error estimate 
	\[
	\| u_h - u \|_{Z_h} 
	%+
	%h^\half \| u - u_h \|_{L^2(0,T;L^2(\partial \Omega_h))} 
	%+ h^\omega  \| \dt (u - u_h) \|_{L^2(0,T;L^2(\Omega))} 
	\leq C_* h^\xi.
	\]
\end{theorem}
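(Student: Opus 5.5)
We apply the generalized Newton--Kantorovich theorem, Lemma \ref{NewtonKantorowich}. The spaces are $X = (Z_h, \|\cdot\|_{Z_h})$ and $Y = Z_h'$, and the map is $\G_h(u_h) := b_h(u_h,\cdot) - \ell_h$. On the finite-dimensional space $Z_h$, $\G_h$ is Fr\'echet differentiable with derivative $\G'_{h,\tild u_h} = b_h'(\cdot,\cdot;\tild u_h)$, because $n_h$ is quadratic. We take $D_* = D_{\bar h,*}$, which is open and convex since it is a ball around $u$ in $\|\cdot\|_{Z_h}$, and the starting point $u_* = u_{h,*} \in D_{h,*}$.

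\textbf{Step 1 (inverse and smallness).} For $\tild u_h \in D_*$ we must verify the smallness condition \eqref{eq:smallness_cond}. We use $\disnorm{\tild u_h(t)}_h \le \disnorm{u(t)}_h + \disnorm{(\tild u_h-u)(t)}_h$. The first term is at most $C_U\varrho_U$ by Remark \ref{re:C_U}. For the second, we pass from the $L^2$-in-time control in $\|\cdot\|_{Z_h}$ to a pointwise-in-time bound through an inverse estimate in time, which costs a factor $\tau^{-1/2}\le c_r^{1/2}h^{-\omega/2}$ by \eqref{eq:mesh_balance}. Since $\|\tild u_h - u\|_{Z_h} < C_*\bar h^{\min\{p+\vartheta,p+1-\omega/2\}}$, the assumption $(1-\chi)\min\{\dots\}\ge \omega/2$ (in particular $\min\{\dots\} > \omega/2$) makes this contribution small for $h\le \bar h$ small. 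Lemma \ref{lem:linear_wellposed} then gives that $\Gamma_*$ exists on all of $Y$, with $\kappa \le c_\varrho^{-1}$ uniformly in $h$.

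\textbf{Step 2 (H\"older continuity).} Bilinearity of $N$ gives
\[
(\G'_{u_1}-\G'_{u_2})(w,z) = n_h'(w,z;u_1-u_2).
\]
Repeating the H\"older and broken Sobolev argument of \eqref{eq:coersive_n} with three different arguments, we obtain the bound $C_n \max_t\disnorm{(u_1-u_2)(t)}_h\,\|w\|_{Z_h}\|z\|_{Z_h}$. Applying the same time-inverse estimate as in Step 1 yields Lipschitz continuity, $\lambda \lesssim h^{-\omega/2}$. To absorb the $h$-dependence we interpolate: on $D_*$ we have $\|u_1-u_2\|_{Z_h} \lesssim h^{\min\{p+\vartheta,p+1-\omega/2\}}$, so
\[
\|u_1-u_2\|\, h^{-\omega/2} \le \|u_1-u_2\|^{\chi}\, \big(\|u_1-u_2\|^{1-\chi} h^{-\omega/2}\big),
\]
and the bracket is bounded because $(1-\chi)\min\{\dots\}\ge\omega/2$. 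Hence $\G'$ is $\chi$-H\"older continuous on $D_*$ with $\lambda$ independent of $h$.

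\textbf{Step 3 (residual at $u_*$ and the radius).} By consistency \eqref{eq:consistent1} we have $b_h(u,z_h)=\ell(z_h)$, so
\[
\G_h(u_*)(z_h) = b_h(u_*,z_h) - b_h(u,z_h) + (\ell-\ell_h)(z_h).
\]
The difference $b_h(u_*,\cdot)-b_h(u,\cdot)$ is bounded by $(m_h+a_h+d_h)(u_*-u,\cdot)$ plus $n_h'(u_*-u,\cdot;\tfrac12(u_*+u))$. Each piece is estimated exactly as in Lemma \ref{le:convergence_1}: we use the projection $\pi_h u$, the bounds of Lemma \ref{le:convergence_2}, and Theorem \ref{th:convergence}. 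This gives
\[
\phi \le \kappa\,\|\G_h(u_*)\|_{Z_h'} \lesssim h^\xi + \|\ell_h-\ell\|_{Z_h'}.
\]
Assuming the data interpolation error is of order $h^\xi$ (implicit in $\ell_h$), we get $\phi\lesssim h^{\xi}$. Therefore $\upsilon=\kappa\lambda\phi^\chi\to0$ as $h\to0$, and $\upsilon\le\chi_0$ holds for $h<\tild h$.

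\textbf{Step 4 (conclusion).} The radius satisfies $\varrho \le C\phi \lesssim h^\xi$. The conditions $\nu>\omega$ and $\min\{p+\vartheta-\omega,\,p+1-3\omega/2\}>0$ ensure that $h^\xi$ is small compared with the defining radius of $D_{\bar h,*}$, so $B_\varrho(u_*) \subset D_*$. Lemma \ref{NewtonKantorowich} then yields existence, order $1+\chi$ convergence of Algorithm \ref{algo:newton}, and uniqueness in $B_{\tild\varrho}(u_*)\cap D_*$. Finally, $\|u_h-u\|_{Z_h} \le \|u_h-u_*\| + \|u_*-u\| \le \varrho + \|u_*-u\| \lesssim h^\xi$.

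\textbf{Main obstacle.} The delicate step is Step 2 together with the $L^\infty$-in-time control of $\disnorm{\cdot}_h$ in Step 1. The norm $\|\cdot\|_{Z_h}$ only controls $\disnorm{\cdot}_{\star,h}$ and time jumps, so the inverse inequality loses $h^{-\omega/2}$. The whole argument hinges on trading this loss against the a priori smallness of $D_*$ through the exponent condition on $\chi$. I expect that tracking the exponents consistently, so that all constants stay $h$-independent, will require the most care.
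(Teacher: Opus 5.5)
Your Steps 1--2 follow the paper: Newton--Kantorovich on $D_{\bar h,*}$, with a time-inverse estimate costing $\tau^{-1/2}\lesssim h^{-\omega/2}$ that is absorbed through $(1-\chi)\min\{p+\vartheta,p+1-\frac{\omega}{2}\}\ge\frac{\omega}{2}$. Step 3, however, has a genuine gap.

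You claim that $b_h(u_*,\cdot)-b_h(u,\cdot)$ can be estimated ``exactly as in Lemma \ref{le:convergence_1}'', giving $\phi\lesssim h^\xi+\|\ell_h-\ell\|_{Z_h'}$ for an \emph{arbitrary} initial guess $u_*\in D_{h,*}$. The estimates of Lemma \ref{le:convergence_1} only work for $\pi_h u-u$:
\begin{itemize}
\item in $m_h$, the volume term $\int (M(\pi_h u-u),\partial_t z_h)$ is removed by the orthogonality (iii) of $\pi_h$;
\item the remaining point values $(\pi_h u-u)_j(t_j)$ and face traces $\|\pi_h u-u\|_{L^2(\partial\Omega_h)}$ are bounded by Lemma \ref{le:convergence_2}, not by $\|\cdot\|_{Z_h}$.
\end{itemize}
Split $u_*-u=(u_*-\pi_h u)+(\pi_h u-u)$. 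The discrete part $w_h=u_*-\pi_h u$ is only known to be small in $\|\cdot\|_{Z_h}$. But $m_h(w_h,z_h)$ and $a_h(w_h,z_h)$ are not bounded by $C\|w_h\|_{Z_h}\|z_h\|_{Z_h}$ with $C$ independent of $h$. The norm contains no time derivative of either argument, and for point values in time and face traces it only sees jumps. You therefore need inverse inequalities in time, costing $\tau^{-1}\le c_r h^{-\omega}$, and discrete trace inequalities, costing $h^{-1/2}$.

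This is exactly where the paper uses $\min\{p+\vartheta-\omega,p+1-\frac{3\omega}{2}\}>0$ and $\nu>\omega$, see \eqref{eq:bounaryL2}--\eqref{eq:timederi2}. You instead invoke these hypotheses for $B_\varrho(u_*)\subset D_*$ in Step 4, where only $\xi>0$ and $h\ll\bar h$ matter. That is a sign that the step which actually needs them was skipped.

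With the correct bound, $\phi$ is merely \emph{small} for a generic start, and this is what the paper proves. It estimates $\|G_h(u_{h,*})\|_{Z_h'}$ directly, without consistency, in terms of $\|u_{h,*}\|_{Z_h}$, boundary traces and time derivatives, plus $\|\ell_h\|_{Z_h'}\le\varrho_\ell$; see \eqref{eq:phi_estimate}. For the time derivatives it uses an auxiliary $u_*\in V$ with $\pi_h u_*=u_{h,*}$ so that (iii) applies. Each term is small through $\varrho_U$, $\varrho_\ell$ and positive powers of $\bar h$. This suffices for $\upsilon\le\chi_0$ and for $B_\varrho(u_{h,*})\subset D_*$, but not for $\varrho\lesssim h^\xi$. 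So your final bound $\|u_h-u\|_{Z_h}\le\varrho+\|u_*-u\|_{Z_h}\lesssim h^\xi$ does not follow; the paper reads the rate off from $u_h\in D_{h,*}$ instead.

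Your consistency route can be rescued for the error estimate as follows. Run Kantorovich from the particular start $u_*=\pi_h u$; there the bounds of Lemma \ref{le:convergence_1} really do give $\phi\lesssim h^\xi+\|\ell_h-\ell\|_{Z_h'}$. Then note that $\tilde\varrho=(\kappa\lambda)^{-1/\chi}$ is bounded below independently of $h$, so the discrete solution is unique in all of $D_*$ once $\bar h$ is small. Two costs remain:
\begin{itemize}
\item Newton convergence from an arbitrary $u_{h,*}\in D_{h,*}$ still needs the inverse-estimate bound above.
\item You are adding the hypothesis $\|\ell_h-\ell\|_{Z_h'}\lesssim h^\xi$, which is not in the statement; you never use $\|\ell_h\|_{Z_h'}\le\varrho_\ell$.
\end{itemize}

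A smaller point, shared with the paper's proof: membership in $D_{\bar h,*}$ only yields powers of $\bar h$. Hence the factor $h^{-\omega/2}$ in Steps 1--2 is absorbed only when $h$ and $\bar h$ are comparable.
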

\begin{proof}
Let $h \leq \bar{h} < 1$. We apply Lemma \ref{NewtonKantorowich} to 
\[
\G_h : Z_h \to Z_h' \text{ defined by }  \langle \G_h(u_h), \cdot  \rangle_{Z_h'} = b_h(u_h, \cdot) -  \ell_h(\cdot)  
\] 
%Let $\varrho:= \frac{c_{\iptheta}}{2 C_n}>0$ and 
using the open, convex set $D_{\bar{h}, *} \subset Z_h$. Note that due to Lemma \ref{le:convergence_2}, $D_{*,h}$ is non-empty, when choosing $C_*$ suitably, since then $\pi_h u \in D_{*,h}$. This follows from the projection error estimates, trace inequality and inverse inequality, see equations \eqref{eq:bounaryL2}, \eqref{eq:timederi} below.

The Fr\'{e}chet derivative of $G_h$ at $\tild{u}$ evaluates to $ b'_h(u_h,z_h;\tild{u})$, cf. \cite[Lemma 3.2]{QuadraticWave}. First, we show its well-posedness on $D_{*,\bar{h}}$ using Lemma \ref{lem:linear_wellposed}. 
%equation \eqref{eq:coersive_n}, we derive for $\hat{u}_h \in Z_h$ and 
For $u_{h,*}$ in $D_{h,*}$ we have 
\[ \label{eq:coersKant}
\begin{split}
%	| n_h'(\hat{u}_h,\hat{u}_h;u_{h,*}) | 
%	&\leq | n_h'(\hat{u}_h,\hat{u}_h; u_{h,*} - u) | + | n_h'(\hat{u}_h,\hat{u}_h;  u ) | \\
\max_{[0,T]} \disnorm{   u_{h,*}}_h
	&\leq   \max_{[0,T]} \disnorm{ u_{h,*} - u}_h +  \max_{[0,T]} \disnorm{  u}_h    \\
	&\leq   C_\textrm{inv} \tau^{-\half} \disnorm{u_{h,*} - u}_{\star,h} + C_U \| u \|_U\\
	&\leq C_\textrm{inv} h^{-\frac{\omega}{2}} \sqrt{c_r} c_\iptheta^{-1} \|u_{h,*} - u\|_{Z_h} + C_U \varrho_U   \\
	&\leq  C_\textrm{inv} \sqrt{c_r} c_\iptheta^{-1} C_* \bar{h}^{ \min\{ p + \vartheta - \frac{\omega}{2}, p+1- \omega \}} + C_U \varrho_U 
\end{split}
\]
where $C_U>0$ from Remark \ref{re:C_U} is used, $C_\textrm{inv}>0$ comes from an inverse inequality to estimate the $L^\infty$-norm with respect to the $L^2$-norm in time, mesh balance \eqref{eq:mesh_balance} and $\|  u_{h,*} - u \|_{Z_h} < C_* h^{ \min\{ p + \vartheta, p+1-\frac{\omega}{2} \}} $ is used. Choosing $\bar{h}$ and $\varrho_U$ small enough, we obtain due to Lemma \ref{lem:linear_wellposed} well-posedness of the variational problem
\[
b_h'(\hat{u}_h, z_h, u_{h,*}) = \hat{\ell}_h(z_h)  \quad \forall z_h \in Z_h  
\]
with an $h$-independent a priori estimate $\| \hat{u}_h \|_{Z_h} \leq c_{*}^{-1} \| \hat{\ell}_h \|_{Z_h'}$ for all $h < \bar{h}$.
If $h<\bar{h} < 1$ and if we deduce $\kappa = \frac{1}{c_*}$ from the apriori estimate,
it follows that $G'_{h, u_{h,*}}\colon Z_h \to Z_h'$ is invertible for all $u_* \in D_{h, *}$.

To establish the Hölder estimate of $G_h'$, we proceed analogously to estimate \eqref{eq:coersive_n} and combine it again with an inverse inequality to obtain for $u_1,u_2 \in D_{h,\star}$ and all $u_h,z_h \in Z_h$
\[
\begin{split}
  | b'_h(u_h,z_h;u_1) -  \ell_h(z_h) - &b'_h(u_h,z_h;u_2) + \ell_h(z_h) |
  \\
  &= | n'_h(u_h,z_h;u_1 - u_2)  | \\
&\leq C_n c_\iptheta^{-2} \max_{[0,T]} \disnorm{ u_1 - u_2 }_{h} \| z_h \|_{Z_h} \| u_h \|_{Z_h} \\ 
&\leq C_n c_\iptheta^{-3} C_\textrm{inv} \sqrt{c_r} 2 h^{- \frac{ \omega}{2}} \| u _1 - u_2 \|_{Z_h} \| z_h \|_{Z_h} \| u_h \|_{Z_h} \\
&\leq C_n c_\iptheta^{-3} C_\textrm{inv} \sqrt{c_r} 2 h^{- \frac{ \omega}{2}} \| u _1 - u_2 \|_{Z_h}^{1- \chi} \| u _1 - u_2 \|_{Z_h}^\chi \| z_h \|_{Z_h} \| u_h \|_{Z_h} \\
&\leq  C_n c_\iptheta^{-3} C_\textrm{inv} \sqrt{c_r} h^{- \frac{\omega}{2}} (2 C_* h^{\min\{ p + \vartheta, p+1-\frac{\omega}{2} \} } )^{1 - \chi} \| u _1 - u_2 \|_{Z_h}^{\chi}  \| z_h \|_{Z_h} \| u_h \|_{Z_h} \\
&\leq \lambda  \| u _1 - u_2 \|_{Z_h}^{\chi}  \| z_h \|_{Z_h} \| u_h \|_{Z_h},
\end{split}
\]
so that the $\chi$-Hölder continuity holds with $\lambda \leq C_n c_\iptheta^{-3} C_\textrm{inv} \sqrt{c_r}  (2 C_*)^{1 - \chi}$ independent of $h$, due to the assumption $ (1- \chi) \min\{ p + \vartheta, p+1-\frac{\omega}{2} \} \geq \frac{\omega}{2}$ and $\bar{h} < 1$.

In order to fulfill assumption \eqref{betaK}, we establish an estimate on $  \| G(u_{h,*}) \|_{Z_h'}$.
%\leq C_G \| u_{h,*} \|_{Z_h}$$ with $C_G>0$ independent of $h$. 
% can be made sufficiently small.
%For $b_h(u_{h,*}, \cdot)$ we have the following estimates,
Let $u_* \in V$ denote a function such that $\pi_h u_* = u_{h,*} \in D_{\hbar, *}$.

Then by triangle inequality and as in the proof of Lemma \ref{le:convergence_1}
	\[
\begin{split}
  	\vert m_h (u_{h,*}, z_h ) \vert 
	 &=  \bigg\vert \sum_{j=1}^k \int_{t_{j-1}}^{t_j} (  \M \dt u_{h,*} , z_h )_{\Omega_h} \wrtdt 
	+ (M \jump{ u_{h,*}}_{j-1} , z_{h,j}(t_{j-1}) )_{\Omega_h} \bigg\vert
	\\
	&= \bigg\vert \sum_{j=1}^k - \int_{t_{j-1}}^{t_j} (  \M u_{h,*} , \dt z_h )_{\Omega_h} \wrtdt 
	- (M (u_{h,*})_{j}(t_j) , \jump{z_h}_{j}  )_{\Omega_h} \bigg\vert
	\\
	&\leq \bigg\vert \sum_{j=1}^k - \int_{t_{j-1}}^{t_j} (  \M  ( u_{h,*} - u_*) , \dt z_h )_{\Omega_h} \wrtdt 
	- (M (u_{h,*})_{j}(t_j) , \jump{z_h}_{j}  )_{\Omega_h} \bigg\vert + | (M \dt u_*, z_h)_Q |
	\\
	&= \left\vert \sum_{j=1}^k (M^\half (u_{h,*} )_{j}(t_j) , M^\half \jump{z_h}_{j}  )_{\Omega_h}\right\vert  + | (M \dt u_*, z_h)_Q | \\
	& \leq C_M \left(\| u_{h,*} \|_{H^1(0,T;L^2(\Omega))} + \| \dt u_* \|_{L^2(0,T;L^2(\Omega))} \right) \| z_h \|_{Z_h},
\end{split}
\]
where we use a trace like inequality on $(u_{h,*})_j$, so that time evaluations can be estimated by the $H^1$ norm. As in the proof of Lemma \ref{le:convergence_1} we have
\[
\begin{split}
	\vert  \bilinspacesf(u_{h,*}, \numpvtest)  \vert 
	&\leq \int_0^T   \|  \jacobian u_{h,*}  \|_{L^2(\Omega_h)}  \|\numpvtest \|_{L^2(\Omega_h)}
	+ 2 C_A  \| u_{h,*} \|_{L^2(\partial \Omega_h)} 
	\| A_\norvec \jump{\numpvtest} \|_{L^2(\partial \Omega_h)}  \wrtdt 
	\\ 
	&\leq C_{\tild{A}} ( \| u_{h,*} \|_{Z_h} + \| u_{h,*} \|_{L^2(0,T;L^2(\partial \Omega_h))} ) \| z_h \|_{Z_h},
\end{split}
\]
and
\[
\begin{split}
	| \bilinspaceip( u_{h,*}, \numpvtest) |  &\leq \int_0^T 
	( \| \D^\half \jacobian u_{h,*} \|_{L^2(\Omega_h)}   ) 
	\| D^\half \jacobian \numpvtest \|_{L^2(\numspacedomain)} 
	+ | \bilinspaceipbdr( u_{h,*}, \numpvtest) | \wrtdt  
	\\
	&\leq \int_0^T (1 + 2 C_\textrm{ip} D_{\max}^2)  \disnorm{u_{h,*}}_h  \disnorm{z_h}_h \wrtdt
	\\
	&\leq C_D  \| u_{h,*} \|_{Z_h} \| z_h \|_{Z_h},
\end{split}
\]
as well as from estimate \eqref{eq:coersKant}
\[
\begin{split}
| n_h(u_{h,*}, z_h)  | = \half | n'_h(u_{h,*}, z_h; u_{h_*}) | &\leq \half | n'_h(u_{h,\star}, z_h; u_{h,*} - u) | + \half | n'_h(u_{h,*}, z_h; u) | \\
&\leq C_N \| u_{h,*} \|_{Z_h} \| z_h \|_{Z_h}
\end{split}
\]
with $C_N \leq C_n ( C_\textrm{inv} \sqrt{c_r} c_\iptheta^{-3} C_* + C_U \varrho_U )$. Combining the estimates, we conclude with the discrete Poincaré inequality \eqref{eq:poincare1} and triangle inequality 
\[ \label{eq:phi_estimate}
\begin{split}
 \| G(u_{h,*}) \|_{Z_h'} &\leq C_G ( \| u_{h,*} \|_{Z_h} + \| u_{h,*} \|_{L^2(0,T;L^2(\partial \Omega_h))}  \\
 &+ \| \dt u_{h,*} \|_{L^2(0,T;L^2(\Omega))} + \| \dt (u_{h,*} - u_* ) \|_{L^2(0,T;L^2(\Omega))} )
\end{split}
\]
with $C_G>0$ independent of $h$.
Note that by triangle inequality \[ \label{eq:estimate_Z_h}
\| u_{h,*} \|_{Z_h} \leq \|  u_{h,*} - u \|_{Z_h} + \| u \|_{Z_h} \leq C_* \bar{h}^\xi + C_U \varrho_u,
\]
and in combination with a discrete trace inequality
\[ \label{eq:bounaryL2}
\begin{split}
\| u_{h,*} \|_{L^2(0,T;L^2(\partial \Omega_h))} 
&\leq C_\textrm{tr} h^{- \half }\| u_{h,*} - u \|_{L^2(0,T;L^2( \Omega_h))} + \| u \|_{L^2(0,T;L^2(\partial \Omega_h))} \\
&\leq C_\textrm{tr} h^{- \half } c_p c_{\iptheta}^{-1} \| u_{h,*} - u \|_{Z_h} + C_U \varrho_U %+ \| u \|_{L^2(0,T;L^2(\partial \Omega_h))} 
\\
&\leq C_\textrm{tr} c_p c_{\iptheta}^{-1} C_* \bar{h}^{ \min\{ p + \vartheta, p+1-\frac{\omega}{2} \} - \half} + C_U \varrho_U .
\end{split}
\]
Using an inverse inequality for the time derivative we have
\[ \label{eq:timederi}
\begin{split}
	\|\dt u_* \|_{L^2(0,T;L^2(\Omega))} &\leq C_\textrm{inv} \tau^{-1} \|  u_{h,*} - u \|_{L^2(0,T;L^2(\Omega))}  + \| \dt u \|_{L^2(0,T;L^2)}
	% &\leq C_\textrm{inv} \tau^{-1} \| u_*- u_{h,*} \|_{L^2(0,T;L^2(\Omega))}  + \tau^{-1} C_* \bar{h}^{\xi - \omega} + C_U \varrho_U
	\\
	&\leq C_\textrm{inv} \sqrt{c_r} c_p c_{\iptheta}^{-1} C_* \bar{h}^{\min\{ p + \vartheta - \omega , p + 1- \frac{3 \omega}{2} \}} + C_U \varrho_U.
\end{split}
\] 
Using an inverse inequality for the time derivative and projection error estimate \eqref{eq:error-projL2} we have
\[ \label{eq:timederi2}
\begin{split}
	\|\dt (u_{h,*} - u_* ) \|_{L^2(0,T;L^2(\Omega))} &\leq  C_\textrm{inv} \tau^{-1 } \|  u_* - u_{h,*} \|_{L^2(0,T;L^2(\Omega))} 
	% &\leq C_\textrm{inv} \tau^{-1} \| u_*- u_{h,*} \|_{L^2(0,T;L^2(\Omega))}  + \tau^{-1} C_* \bar{h}^{\xi - \omega} + C_U \varrho_U
	\\
	&\leq C_\textrm{inv} \sqrt{c_r} C_{\tild{\pi}} T^{-\half} \| u_*\|_V \bar{h}^{\min\{p+1,\nu \} - \omega}  
\end{split}
\] 
with $C_{\tild{\pi}}>0$ independent of $h$.
\begin{comment}
	content...
\[ \label{eq:timederi}
\begin{split}
\|\dt u_* \|_{L^2(0,T;L^2(\Omega))} &\leq  C_\textrm{inv} \tau^{-1 } ( \|  u_* - u_{h,*} \|_{L^2(0,T;L^2(\Omega))} + \|  u_{h,*} - u \|_{L^2(0,T;L^2(\Omega))} ) + \| \dt u \|_{L^2(0,T;L^2)}
% &\leq C_\textrm{inv} \tau^{-1} \| u_*- u_{h,*} \|_{L^2(0,T;L^2(\Omega))}  + \tau^{-1} C_* \bar{h}^{\xi - \omega} + C_U \varrho_U
\\
&\leq C_\textrm{inv} \sqrt{c_r} (  C_{\tild{\pi}} T^{-\half} \| u_*\|_V \bar{h}^{\min\{s,p+1,q+1\} - \omega} + c_p c_{\iptheta}^{-1} C_* \bar{h}^{\xi - \omega} )+ C_U \varrho_U 
\end{split}
\] 
\end{comment}
%Here, we rely on the assumption that $\xi > \omega $
%p,q \geq 1$ if $\omega>1$ 
%in order to guarantee that $\xi - \omega>0$ and ${\min\{s,p+1,q+1\} - \omega}>0$.
%choose $r \in (0 , \frac{1}{2 \kappa^2 \lambda} )$, so that
Therefore, choosing $\bar{h}$, $\varrho_U$ and $\varrho_\ell$ small enough, we get by combining estimates \eqref{eq:phi_estimate}, \eqref{eq:estimate_Z_h}, \eqref{eq:bounaryL2}, \eqref{eq:timederi}, \eqref{eq:timederi2}
\[
 \kappa \lambda \| - \Gamma_* ( b_h(u_{h,\star}, \cdot) -  \ell_h(\cdot) )  \|_{Z_h}^\chi \leq \kappa^{1+\chi}\lambda \| b_h(u_{h,\star}, \cdot) -  \ell_h(\cdot) \|_{Z_h'}^\chi  < \chi_0   
\]
independent of $h$.
%As the final step, we verify whether $r^{-} \leq \varrho$. Due to the choice of $\varrho$,
%we have $c_\varrho = \frac{c_{\iptheta}}{2}$ and $\varrho = \frac{c_{\iptheta}}{2 \lambda}$. Therefore, the condition transforms into
%\[
 %1 - \sqrt{1 - 2 \kappa \lambda \| \Gamma_* G(u_*) \|_X}  < \kappa \lambda \frac{c_{\iptheta}}{2 \lambda} = 1,
%\] 
%which holds, due to inequality \eqref{betaK}.

Therefore, there exists a $\bar{h}>0$ such that the required inequalities hold for any $u_{h,*} \in D_{\bar{h},*}$.
We are now left to choose a starting value $u_{h,*}$ such that $B^{Z_h}_{\varrho}(u_{h,*})\subseteq D_{*,\bar{h}}$ with $\varrho$ from equation \eqref{rstart_rstarstar}. For this we fix $\bar{h}$ and let $u_{h,*} \in D_{h,*} $ with $h$ to be chosen. Assuming that $\upsilon=\kappa \lambda \phi^\chi$ is below a certain threshold cf. \eqref{betaK}, we have the bound \[
\varrho \leq 2 \phi (1 - \kappa \lambda \phi^\chi) \leq 2 \phi = 2 \| - \Gamma(b_h(u_{h,*}, \cdot) - \ell_h(\cdot))\|_{Z_h'}
\]
 so that $\varrho$ can be made sufficiently small if ${h}$, $\varrho_U$ and $\varrho_\ell$ are small enough by the previous estimates \eqref{eq:phi_estimate} to \eqref{eq:timederi}. 
Let $z_h \in B_\varrho^{Z_h}(u_{h, *})$. Then 
\[
\| z_h - u \|_{Z_h} \leq \| z_h - u_{h,*} \|_{Z_h} + \| u_{h,*} - u \|_{Z_h} \leq \varrho + C_* h^\xi ,
\]
\begin{comment}
	content..
\[
\| z_h - u \|_{L^2(0,T;L^2(\partial \Omega_h))} \leq C_\textrm{tr} h^{-\half} \| z_h - {u_{h,*}} \|_{L^2(0,T;L^2(\Omega_h))} + \| u_{h,*} - u \|_{L^2(0,T;L^2(\partial \Omega_h))} \leq C_\textrm{tr} h^{-\half} \varrho + C_* h^{\xi - \omega},
\]
\[
 \| \dt (z_h - u) \|_{L^2(0,T;L^2(\Omega))} \leq C_\textrm{inv} \tau^{-1}  \| z_h -  u_{h,*} \|_{L^2(0,T;L^2(\Omega))}  +  \| \dt ( u_{h,*} - u) \|_{L^2(0,T;L^2(\Omega))} \leq C_\textrm{inv} h^{-\omega} \varrho + C_* h^{\xi - \omega}.
\]
\end{comment}
so that choosing $\tild{h}<\bar{h}$, $\varrho_U$ and $\varrho_\ell$ so small such that $ \varrho + C_* h^\xi < C_* \bar{h}^\xi$, we obtain $B^{Z_h}_{\varrho}(u_{h,*})\subseteq D_{*,\bar{h}}$ for all $h<\tild{h}$.

Applying Lemma \ref{NewtonKantorowich} gives a unique $u_h \in B_{\tild{\varrho}}^{Z_h}(u_{h,*}) \cap D_{h,*}$ with $\tild{\varrho}$ from \eqref{rstart_rstarstar} such that $G_h(u_h)=0$ for all $h<\tild{h}$. %, where $r^+,\varrho$ are not depending on $h$. 
Since $u_h \in D_{h,*}$, we automatically obtain the error estimate by construction.
%Finally, we remark that $\ell_h$ from definitions \eqref{eq:l_h}, \eqref{eq:l_h_boundary} can be made sufficiently small, assuming that the data \eqref{eq:data} are small enough and a suitable interpolation is used. 
\end{proof}
\begin{comment}
	content...

\begin{remark}
Since $\G_h$ is a quadratic map, convergence results for Newton's methods for quadratic maps hold. For other convergence results and error estimates on Newton's methods, we refer to \cite{Deuflhard2011} and the references therein. 
\end{remark}
\end{comment}
\begin{remark}
We note that the assumptions $\| u \|_U \leq \varrho_U$ and $\| \ell_h \|_{Z_h'} \leq \varrho_\ell$ can be fulfilled, if the data of the continuous problem \eqref{eq:data} are chosen small enough, cf. \cite{QuadraticWave}. Since uniform space-time methods are equivalent to time stepping methods, local existence of a discrete solution always holds if $\tau$ is chosen small enough. However, in Theorem \ref{th:nonlinear_wellposed} we show global in time well-posedness, in which case a convergence analysis without smallness assumptions \eqref{eq:smallness_ass} is not possible, since then global existence of the continuous solution $u$ is not clear. 
%Since space-time methods can be split into small time intervals, choosing $\tau$ and the final time $T$ small enough the convergence analysis automatically holds. 
\end{remark}
\begin{remark}
In comparison to Theorem \ref{th:convergence} we can no longer take $p=0$, since then the assumptions are not fulfilled. Taking $p=q= \omega = \vartheta = 1$ with $s=2$, $\chi=\half$ all assumptions are fulfilled and we obtain convergence if $$u \in H^{2}(0,T;H^1(\Omega;\R^{1+d})) \cap W^{2, \infty}(0,T;L^2(\Omega;\R^{1+d}) ) \cap C([0,T];H^2(\Omega;\R^{1+d}).$$ 
%Taking linear finite elements, we have $\xi=1$ and therefore obtain linear convergence in $h$, if the solution $u$ is smooth enough and $h=\tau$, so that $\omega=1$.
%The conditions $\| \ell_h \| \leq \varrho_\ell$ and $p,q \leq 1$ if $\omega>1$ can be omitted if one assumes $ \xi (1 - \chi) > \frac{\omega}{2} $. Then $\lambda$ can be made sufficiently small to establish estimate \eqref{betaK}. %One has to be careful that the condition 
A convergence analysis for less regular solutions is not contained in this paper.
\end{remark}

\clearpage

\section{Numerical experiments}\label{se:experiments}
For the numerical experiments, we aim to model sound waves propagating through soft tissue. To this end, we choose
%as nonlinearity parameter $\frac{B}{A} = 5$. This leads to the following choice of parameters 
%We solve a system without units. Neumann boundary condition for pressure and Dirichlet for velocity. The system gets existed by a Gauß-Pulse in pressure, i.e
%$ p_{source} = b 10^{-5} e^{- a t^2(-x^2 - y^2)}$ 
%but more stable by initial conditions actually.
%Typical is $ p_0 = 5 \times 10^6$ Pa.
%$$ \epsilon = \frac{p_0}{\rho_0 c_0^2} = \sqrt{ \frac{2 I}{\rho_0 c_0^3}} = 2.2 \times 10^{-3} $$
%\subsection{Diagnostic ultrasound}
%We calculate with a frequency of $30 \ Mhz$ and initial power of $0.1 \ Wcm^{-2}$. In this case the mach number is $\epsilon = 2.7 \times 10^{-5}$. %So $p_0 = 7,6 \times 10^4$. Then
%Physical calculations: $\omega_{r} = 5 MHz$, $c_0 = 1500$, $\rho_0 = 1000$, $\mu_S = 0.001$ for water or $\mu_S = 0.05$ for oil.
%This gives $Rey^{-1} = \frac{\omega_r \mu_S}{c_0^2 \rho_0} = 2.2 \times 10^{-6}$ for water or $Rey^{-1} = 1.1 \times 10^{-4}$ .
%Need $\nu := \frac{4}{3} + \frac{\mu_B}{\mu_S} = 2.3$ due to lacking values. $\zeta = \nu Rey^{-1} = 2.6 \times 10^{-4}$. $\lambda=\frac{\gamma_r}{Pr}$, we have $Pr = 7$ for water and $Pr=100$ for oil. $\gamma_{heat} = \frac{c_p}{c_v} = \frac{4184}{4157} = 1.006$ for water. Now $\gamma_r = \frac{\gamma_{heat}}{\gamma_{heat}-1} = 1.7 \times 10^2.$ This would give $\lambda = 24$ for water or $1.7$ for oil.
%Based on these estimates we choose
\begin{align}
	\alpha=1, \ 
	\beta= 10^{-4}, \ 
	\gamma = 6, \
	\delta = 1, \
	\varepsilon = 1, \
	\zeta = 10^{-4}, \
	\eta = 1 , \
	\theta = 1.
\end{align}
Since system \eqref{eq:pvmodel} is in nondimensional form, most parameters are set to one. The value $\gamma=6$ is derived from the nonlinearity parameter $\frac{B}{A} \approx 5$ in tissue, see \cite{Hamilton:1997} for details. The damping parameters are chosen to approximately match the viscosity coefficients observed in tissue. 
Due to computational constraints, we restrict our study to the case $d=2$. The approximate domain $\Omega_h \times I_h$ is discretized using regular Lagrange quadrilateral elements with $4 \tau = h$. The mesh is refined such that $h=2^{-n_r}$, $n_r \geq 1$. We set the penalty constant to $C_\sigma = 50$. In our experiments, the linear systems to be solved exhibit a block-diagonal structure, so we solve them using GMRES in combination with a block Gauss–Seidel preconditioner.
The implementation is carried out in M++, see \cite{baumgarten2021}, and can run in parallel. The complete source code is available at
 \hyperlink{https://gitlab.kit.edu/kit/mpp/mpp}{\tt https://gitlab.kit.edu/kit/mpp/mpp}. 
\subsection{Error \corr{indicator} for $p$-adaptivity}	
In this paper, we only consider residual based adaptivity. We remark that a duality based adaptivity with respect to a goal functional is also possible, but not investigated in this paper. We apply $p$-adaptivity as a variant of \newcorr{ the maximum marking strategy } described in \cite[Algorithm 1]{DoerflerFindeisenWienersZiegler:2019} and \cite[Chapter 4.2]{ziegler:diss}, with the additional feature that
derefinement is included. \newcorr{Thus, the degree of a space-time cell is increased if $\tilde{\varsigma}_{R} \ge \vartheta_{\mathrm{ref}} \max_{R' \in \mathcal{R}_h}\tilde{\varsigma}_{R'}$
and decreased if $\tilde{\varsigma}_{R} \le \vartheta_{\mathrm{deref}} \max_{R' \in \mathcal{R}_h}\tilde{\varsigma}_{R'}$, depending on $\vartheta_{\mathrm{ref}} > \vartheta_{\mathrm{deref}} > 0$. } 
In \cite{DoerflerFindeisenWienersZiegler:2019}, for the problem $ (M \dt +  A) u = f$ the following local error estimator for a space-time cell $R=(I_j \times K)$ is used
\[
\begin{split}
\varsigma_{R}^2 = h \| ( M \dt + A ) u_h - f \|^2_{L^2(I \times K)}
+ \half \sum_{j=0}^k \| \M^{\frac{1}{2}} \jump{u_h}_{j} \|_{L^2(K)}^2 + \half \int_I \| A_\norvec \jump{u_h} \|_{L^2(\partial K)}^2 \wrtdt .
\end{split}
\]
We, therefore, use the following error indicator to account for the nonlinear terms
\[
\begin{split}
	\tilde{\varsigma}_{R}^2 =\  &h \| ( M \dt + A ) u_h +  N(u_h, u_h) - f \|^2_{L^2(I \times K)}
	\\
	&+ \half \sum_{j=0}^k \| \M^{\frac{1}{2}} \jump{u_h}_{j} \|_{L^2(K)}^2 + \half \int_I \| A_\norvec \jump{u_h} \|_{L^2(\partial K)}^2 \wrtdt 
\end{split}
\]
and neglect diffusion terms, since they are small in our application. When advancing to the next refinement level, the coarse solution is used as the initial guess for Newton's method. For each refined cell, both the temporal and spatial polynomial degrees are increased by one.

\clearpage

\subsection{Known solution}
For the first experiment, we choose the solution 
\[ \label{eq:knownsol} 
u(t,x,y) =  \bigl(
	\dt \psi , \,
	\partial_x \psi, \,
	\partial_y \psi
	\bigr)
\text{ ~ with ~}\psi(t,x,y) = \psi_A \sin( \varphi t) \sin( k x) \sin( k y) 
\] and set $\Omega=(0,1)^2$, $I=(0,1)$, $\neu=\emptyset$, $\psi_A=0.01$, $\varphi=6
\pi$, $k=\pi$.
%[TO DO: Convergence Plot in the $H^2$ Norm, but also this $\| \cdot \|_{Z_h}$ norm maybe?]
In this setting, we have $\vartheta=\omega=1$. Hence, Theorem \ref{th:nonlinear_wellposed} gives for $q=p=1$ a convergence rate of $\xi = 1$, for $q=1,p=2$ a rate $\xi = \frac{3}{2}$ and for $p=q=2$ we get the convergence rate $\xi =2$. 

When plotting the errors \newcorr{with respect to the DG-norm defined
  in \eqref{eq:DGNorm}} of the discrete solutions against the known
solution $u$ in \eqref{eq:knownsol}, we observe that the actual
convergence rates generally exceed the predictions of Theorem
\ref{th:nonlinear_wellposed}, see Figure \ref{fig:dgerror}. As
expected, the $L^2$ errors are consistently smaller than the DG-norm
errors.

\begin{figure}[h]
	\centering
	\includegraphics[scale=0.8]{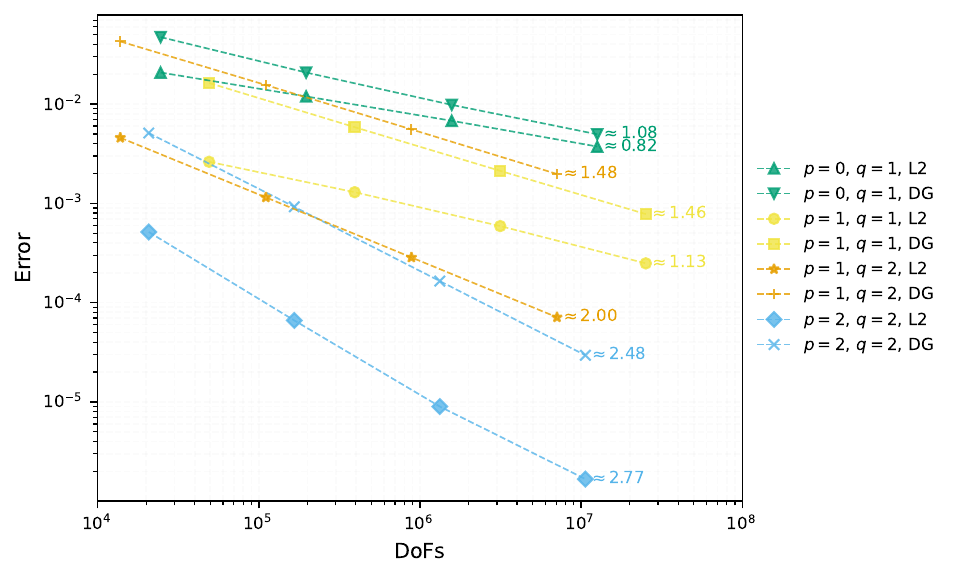}
	\caption{DG-norm error and $L^2$ space-time error comparison between known and discrete solutions for different time degrees $p$ and space degrees $q$ in dependence to the total degrees of freedom (DoFs) used. The extrapolated convergence order (ECO) is indicated at the right end of each line.}
	\label{fig:dgerror}
\end{figure}

\clearpage

\subsection{Unknown solution}
For the next experiment, we set $\Omega=\newcorr{(}-\half, \half\corr{)}^2$, $I=(0,1)$, $\Gamma_N=\emptyset$ and assume homogeneous boundary conditions. As source terms, we choose a Gaussian bump excitation oscillating harmonically in time
\[
p_S(t,x,y) =
\begin{cases}
	a \exp(1) \, \sin\!\big( 2 \pi a_f t \big)
	\, \exp\!\!\left(
	-\dfrac{1}{1 - \dfrac{x^2 + y^2}{r_{\mathrm{a}}^2}}
	\right)
	,
	& \text{if } x^2 + y^2 < r_{\mathrm{a}}^2, \\[1.2em]
	0, & \text{otherwise,}
\end{cases}
\]
for the pressure 
and set $v_S(t,x,y) = 0$. The initial conditions $p_0,v_0$ are set to zero and as parameters we choose $a=2$, $a_f=3$, and $r_a=0.2$. \newcorr{Fig. \ref{fig:sol} \corr{and \ref{fig:sol2} show} the pressure variable of the numerical solution at different time steps. As expected, the solution is radially symmetric.}  
\begin{figure}[h]
	\centering
	\includegraphics[height=12em]{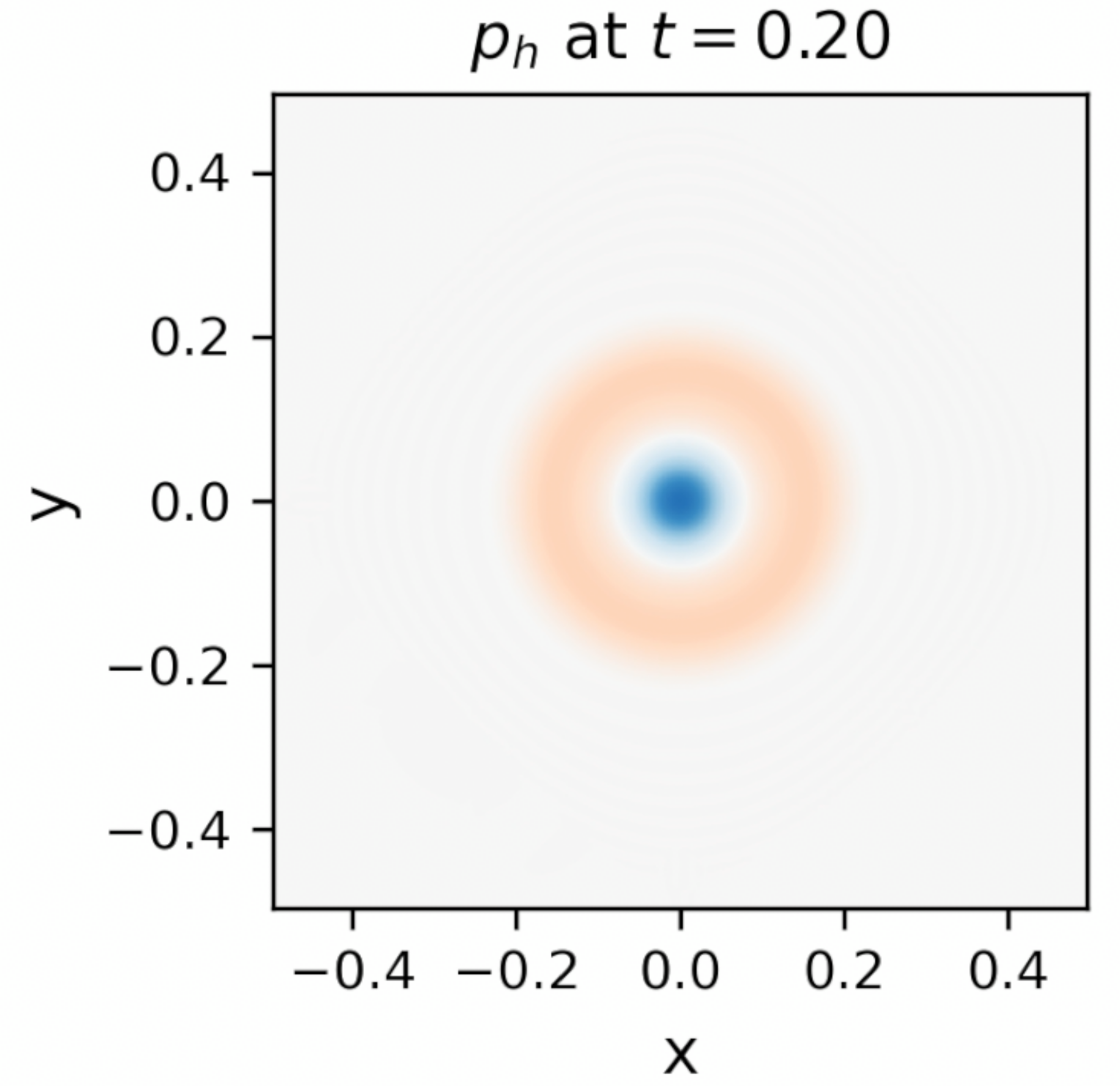}
	\includegraphics[height=12em]{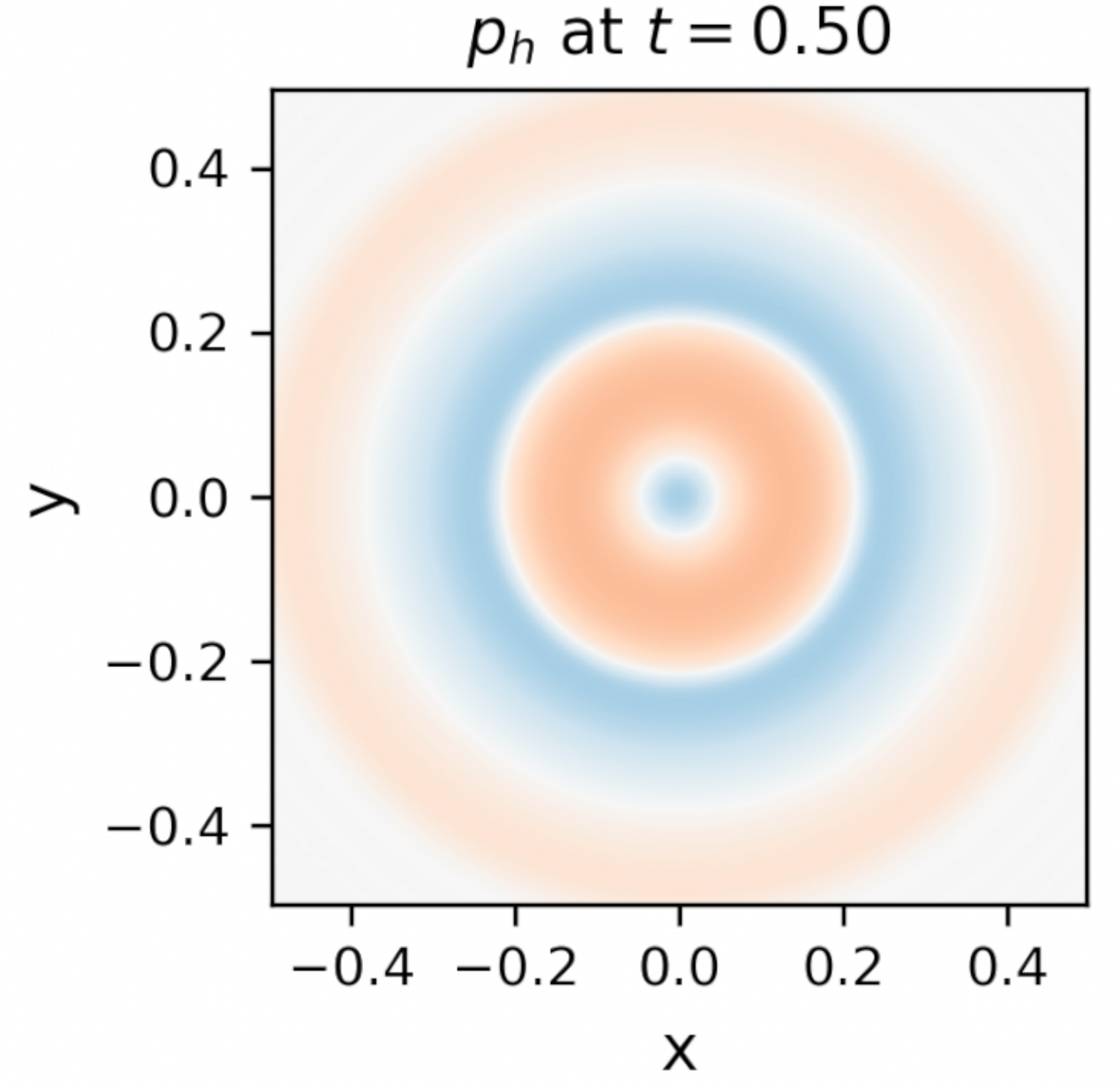}
	\includegraphics[height=12em]{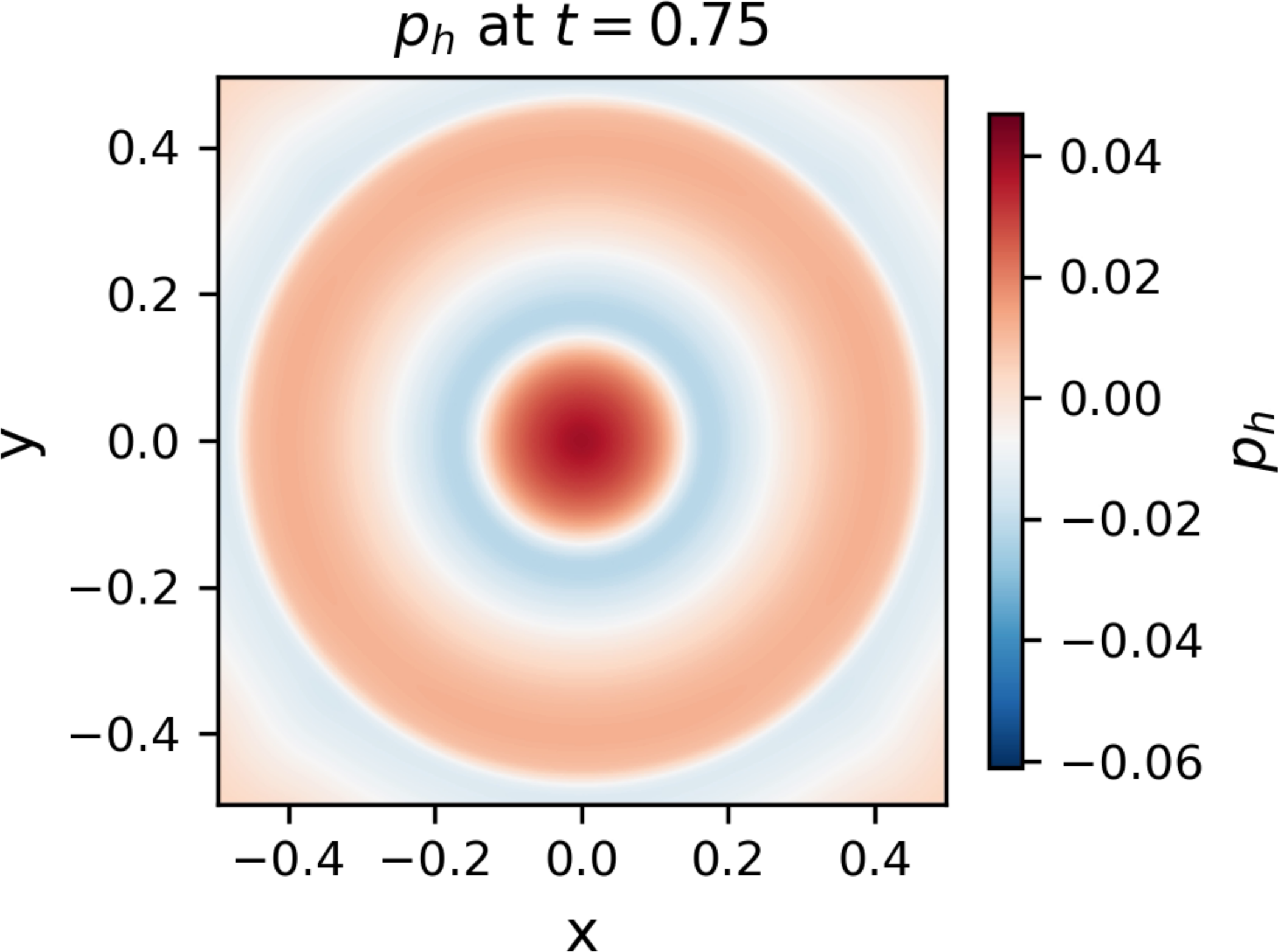}
	\caption{Numerical solution for $n_r = 7$ and $p = q = 1$ at different time steps. The approximated pressure $p_h$ is plotted.} \label{fig:sol}
\end{figure}
\begin{figure}[h] 
	\centering
	\includegraphics[height=16em]{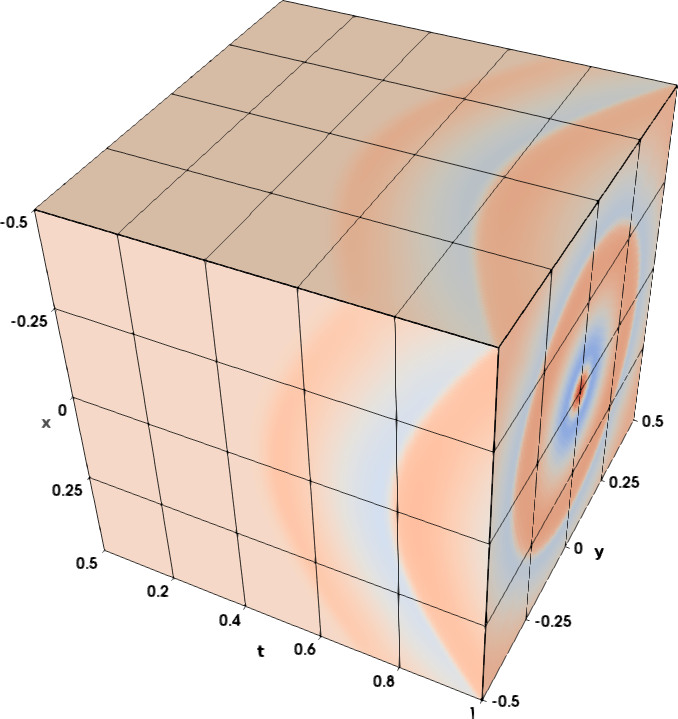}
        \qquad
        \qquad
        \qquad
	\includegraphics[height=16em]{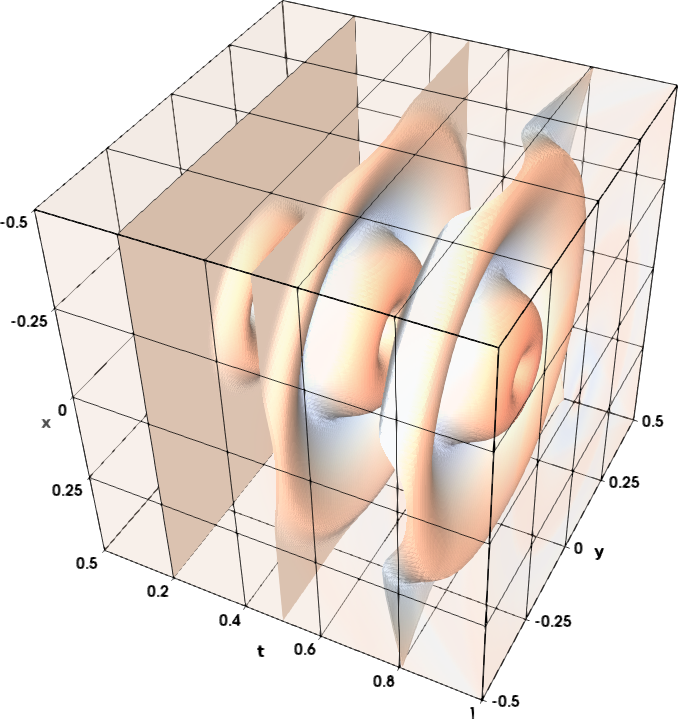}
	\caption{\corr{Numerical solution $p_h$ for $n_r = 7$ and $p = q = 1$ on the space-time domain.}}
        \label{fig:sol2}
\end{figure}

We first note that the model successfully reproduces the characteristic waveform observed in nonlinear acoustics, including the generation of higher harmonics at integer multiples of the base frequency $a_f = 3$, see Figure~\ref{fig:compare}. 
To compare the linear and nonlinear behavior of the system, we compute, for $n_r = 7$ and $p = q = 1$, the numerical solution $u_h$ of the full nonlinear model and the corresponding solution $u_h^L$ obtained by setting $\gamma = \delta = \eta = \theta = 0$. 
To analyze the harmonic content, we evaluate the discrete Fourier transform $\mathcal{F}_h$ of the pressure signals. 

Since the source $p_s$ has a base frequency $a_f=3$, additional spectral components appear at  frequencies $ \omega_{\mathcal{F}} = 3, 6, 9, \dots$, indicating the presence of nonlinear harmonic generation in the full model. We also observe the well known behavior that the nonlinear wave exhibits a shift in its peak values, corresponding to a local change in sound speed. Namely, peaks occur earlier than in the linear wave when the amplitude is positive, and later when the amplitude is negative.
\begin{figure}[h]
	\centering
	\includegraphics[scale=0.8]{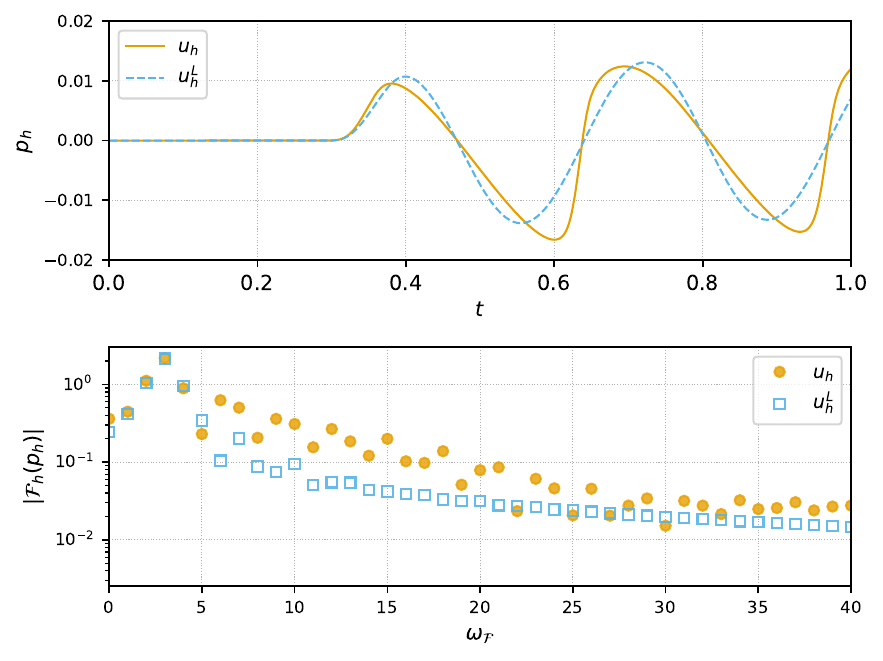}
	\caption{At the top, the time evolution of the pressure at $(x,y) = (0.246094,\, 0.246094)$ is shown. 
		The blue dotted line corresponds to the pressure component of the solution 
		$u_h^L = (p_h^L, v_h^L)$ obtained with $\gamma = \delta = \eta = \theta = 0$, 
		while the orange solid line represents the pressure from the full nonlinear model. 
		At the bottom, the corresponding magnitudes of the discrete Fourier transforms 
		of $p_h$ and $p_h^L$ are displayed.}
	\label{fig:compare}
\end{figure}

To test the adaptivity and the error estimator described above, we plot the error estimator $\tild{\varsigma_R}$ in the $L^2$ norm for each iteration and for different values of $n_r$ in dependence of the total degrees of freedoms used in Figure \ref{fig:errorestimator}. We perform two refinement steps, starting with initial space-time degrees of $p=0,q=1$ and $p=q=1$.
\newcorr{The marking parameter $\vartheta_{\mathrm{ref}}$ is set to $10^{-1}$ and the derefinement parameter $\vartheta_{\mathrm{deref}} = 10^{-2}$. }
We observe that the error estimator decays within each refinement in all cases, except for $p=1,q=1,n_r=6.$
\begin{figure}[h]
	\centering
	\includegraphics[scale=0.7]{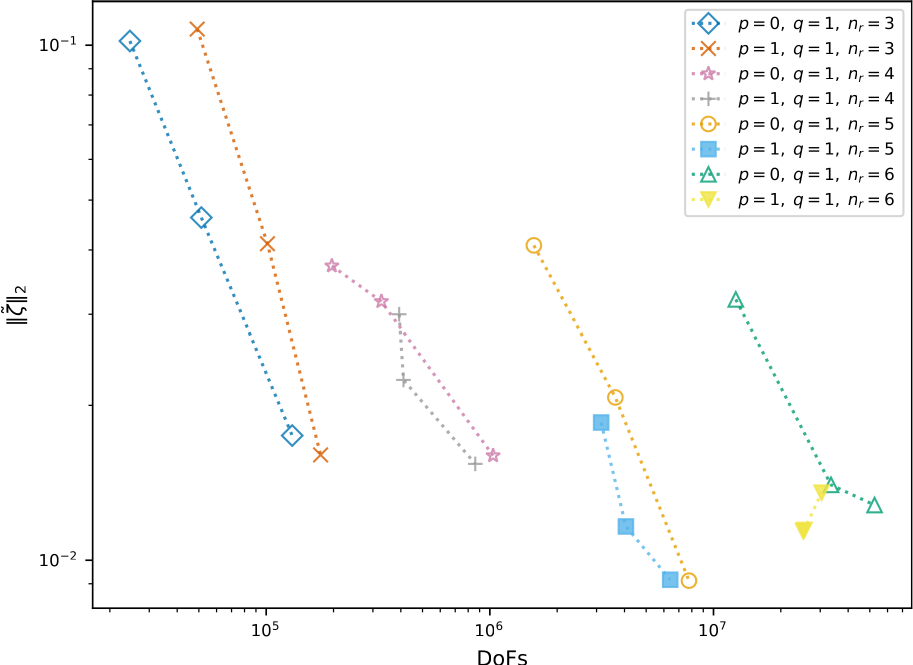}
	\caption{The $L^2$ norm of the nonlinear global error estimator $\tild{\varsigma}$ is shown in dependence of the degree of freedoms of each adaptive iteration.}
	\label{fig:errorestimator}
\end{figure}
%In Figure \ref{fig:massconv} one can see visually that the functions converge, when refining. From the tables we observe that $P$ converges fast then $E$ and that for higher polynomial degree $E$ tends to converge slower.

Next, in order to test convergence, we evaluate the functionals
\[
P(p)[t]:= \int_\Omega p(t) \, \mathrm{d}x \text{ and } E(v)[t]:= \int_\Omega v(t) \cdot v(t) \,  \mathrm{d}x
\]
numerically at each time point and compute the $L^1$ and $L^2$ errors in time. Since the exact solution is unknown, we use a discrete reference solution $u_h^R=(p_h^R, v_h^R)$ computed with $p=q=1$ and $n_r=7$. The results for the functionals $P$ and $E$ are shown in the Tables \ref{tab:1}, \ref{tab:2} as well as in the Figures \ref{fig:masscon} and \ref{fig:ekincon}, respectively. Overall, we observe convergence of the functionals toward the values evaluated at the discrete reference solution. However, we note that adaptivity only becomes advantageous for relatively high numbers of degrees of freedom, around $10^7$. 
We also observe the tendency that adaptivity approximates $E$ more effectively than $P$.

\begin{table}[h]
	\centering
	\renewcommand{\arraystretch}{1.2}
	\begin{tabular}{|c|ccc|ccc|}
		\hline
		cells
		& \multicolumn{3}{c|}{$p=q=1$}
		& \multicolumn{3}{c|}{$p=q=2$} \\
		\cline{2-7}
		& DoF & $\lVert P(e_h) \rVert_{1}$ & $\lVert E(e_h) \rVert_{2} $
		& DoF & $\lVert P(e_h) \rVert_{1}$ & $\lVert E(e_h) \rVert_{2} $\\
		\hline
		2\,048   	& 49\,150 & 1.67613 & 0.17467 & 165\,900 & 0.47892 & 0.02186 \\
		16\,380     & 393\,200 & 0.36469 & 0.04925 & 1\,327\,000 & 0.16742 & 0.00780 \\
		131\,100    & 3\,146\,000 & 0.21867 & 0.02271 & 10\,620\,000 & 0.10059 & 0.01722 \\
		1\,049\,000 & 25\,170\,000 & 0.06514 & 0.01115 & 84\,930\,000 & 0.08683 & 0.00636 \\
		\hline
		EOC & & 1.562 & 1.323 & & 0.821 & 0.594 \\
		\hline
	\end{tabular}
	\caption{Approximation errors of $P,E$ and extrapolated order of convergence (EOC) for $p=q=1$ and $p=q=2$. } \label{tab:1}
	\label{tab:eoc_P_E}
\end{table}

\begin{table}[h]
	\centering
	\renewcommand{\arraystretch}{1.2}
	\begin{tabular}{|c|ccc|ccc|}
		\hline
		cells
		& \multicolumn{3}{c|}{adaptive $p=0, \, q=1$}
		& \multicolumn{3}{c|}{adaptive $p=q=1$} \\
		\cline{2-7}
		& DoF & $\lVert P(e_h) \rVert_{1}$ & $\lVert E(e_h) \rVert_{2} $
		& DoF & $\lVert P(e_h) \rVert_{1}$ & $\lVert E(e_h) \rVert_{2} $\\
		\hline
		2\,048      & 130\,700 & 0.47985 & 0.08535 & 175\,100 & 0.51269 & 0.08465 \\
		16\,380     & 1\,034\,000 & 0.21338 & 0.03840 & 860\,500 & 0.22487 & 0.03840 \\
		131\,100    & 7\,769\,000 & 0.08215 & 0.00803 & 6\,403\,000 & 0.07407 & 0.00808 \\
		1\,049\,000 & 52\,490\,000 & 0.05237 & 0.00312 & 30\,430\,000 & 0.02869 & 0.00331 \\
		\hline
		EOC & & 1.1016 & 1.656 & & 1.656 & 1.846 \\
		\hline
	\end{tabular}
	\caption{Approximation errors of $P,E$ and extrapolated order of convergence (EOC) for adaptive runs starting with $p=0, q=1$ and $p=q=1$.}
\label{tab:2}
\end{table}

\clearpage

\corr{In Tab.~\ref{tab:eoc_P_E} and Tab.~\ref{tab:2} the extrapolated order of convergence 
is investigated for the weighted errors $\lVert P(e_h) \rVert_{1} = 10^4
  \int_0^T | P(p_h) - P(p_h^R) | \wrtdt$ and $\lVert E(e_h) \rVert_{2} =
  10^4 \left(\int_0^T ( E(v_h) - E(v_h^R))^2 \wrtdt \right)^\half$.}

\begin{figure}[h]
	\centering
	\includegraphics[scale=0.7]{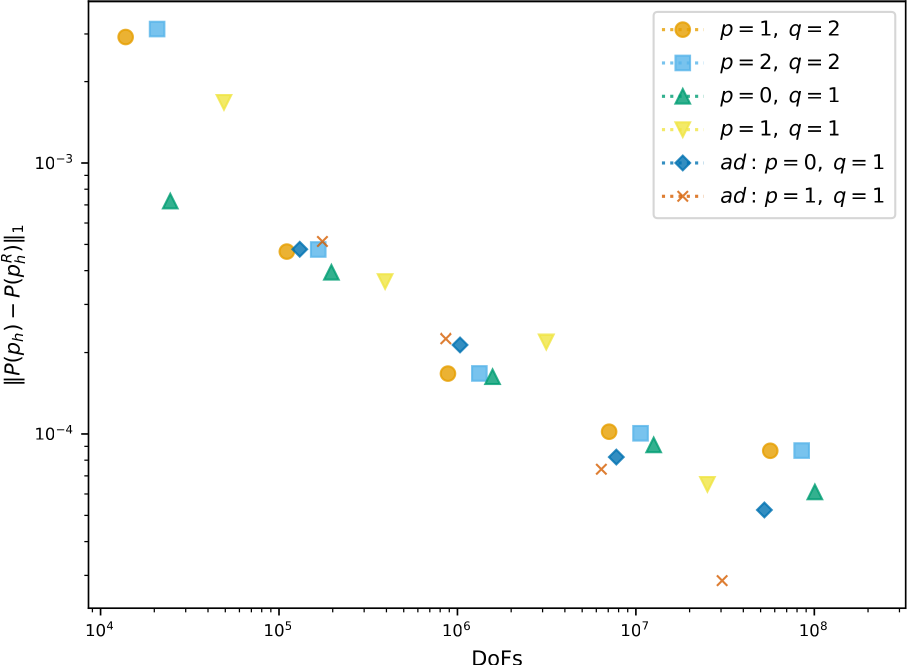}
        \\[-3mm]
	\caption{Convergence to the functional $P$ in the $L^1$ norm is shown in dependence of the degrees of freedom of each solution. For adaptive solutions the last iteration is used. }
	\label{fig:masscon}
\end{figure}

\begin{figure}[h]
	\centering
	\includegraphics[scale=0.7]{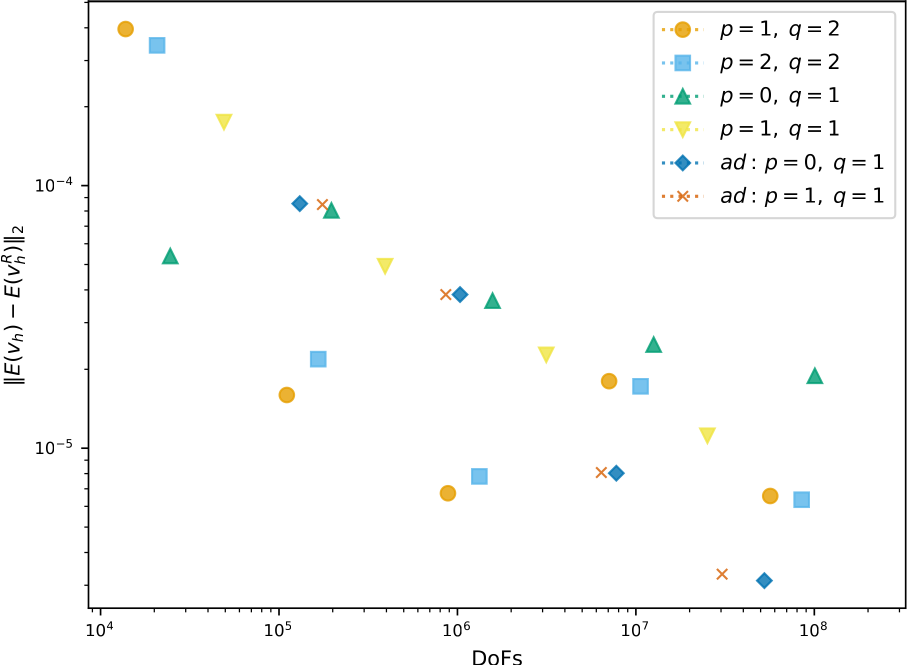}
        \\[-3mm]
	\caption{Convergence to the functional $E$ in the $L^2$ norm is shown in dependence of the degrees of freedom of each solution. For adaptive solutions the last iteration is used. }
	\label{fig:ekincon}
\end{figure}

\clearpage

\section{Conclusion} \label{se:conclusion}
In this work, we have developed a space-time DG method for a nonlinear acoustic equation, extending the framework of linear symmetric Friedrichs systems. We have rigorously established the well-posedness of the scheme and proved convergence in the natural DG norm. Numerical experiments confirm the theoretical results and illustrate the potential of 
$p$-adaptivity for improving accuracy. While adaptivity shows benefits for high resolution cases, its effectiveness at moderate resolutions remains limited, highlighting the need for further investigation and optimization. Overall, the proposed approach provides a robust and flexible framework for the numerical simulation of nonlinear acoustic waves, paving the way for future studies.

A mathematically rigorous convergence analysis of the 
$p$-adaptive algorithm is beyond the scope of this paper and remains an open question in the literature. Future investigations could explore $hp$-adaptive algorithms or hybrid DG methods to further enhance efficiency.
\section*{Acknowledgment}
The authors thank Barbara Kaltenbacher for her careful reading of the manuscript and helpful suggestions, which have led to marked improvements. 
This research was funded in whole or in part by the Austrian Science Fund (FWF) [10.55776/P36318]. For open access purposes, the author has applied a CC BY public copyright license to any author accepted
manuscript version arising from this submission.
\nocite{}
\bibliographystyle{plain}
\bibliography{lit}
\appendix
\section{Details to upwind flux} \label{se:upwindflux}
%\subsubsection*{Derivation of $\Aup$}
In order to obtain an expression for the upwind flux \eqref{eq:def_upwindflux} $\Aup$ of
\begin{align}
	\para \dt p + \divv \vvar &= p_{S} \\
	\pare \dt \vvar + \grad p &= \vvar_{S},
\end{align}
we follow along the lines of the theory presented in \cite[Section 3]{DoerflerFindeisenWienersZiegler:2019}
and assume $d=2$ for simplicity. In its general form, the Friedrichs system can be written as 
\begin{align}
	(\M \dt + A) u = f 
\end{align}
defined on $\Omega \subset \R^2$, where
\begin{align}
	M =
	\begin{pmatrix}
		\para & 0 & 0 \\
		0 & \pare & 0 \\
		0 & 0 & \pare
	\end{pmatrix}
\end{align}
and $A = A_1 \dx{1} + A_2 \dx{2} $
with
\begin{align}
	A_1 = \begin{pmatrix}
		0 & 1 & 0 \\
		1 & 0 & 0 \\
		0 & 0 & 0
	\end{pmatrix},
	\quad
	A_2 = \begin{pmatrix}
		0 & 0 & 1 \\
		0 & 0 & 0 \\
		1 & 0 & 0
	\end{pmatrix}, \quad
	A_\norvec = n_1 A_1 + n_2 A_2 = \begin{pmatrix}
		0 & n_1 & n_2 \\
		n_1 & 0 & 0 \\
		n_2 & 0 & 0
	\end{pmatrix} .
\end{align}
To construct $\Aup$, we use the nontrivial eigenpairs of $M^{-1} A_\norvec$ fulfilling $A_\norvec w = \lambda M w$ with $\lambda \in \R$ and $w \in \R^3$. 
Writing $c_0:=\sqrt{\alpha^{-1} \epsilon^{-1}}$ these are
$$ 
\lambda_{2,3} = \pm c_0, \quad w_{2,3} = \begin{pmatrix}
	\alpha^{-1} \\ \pm c_0 \norvec 
\end{pmatrix}.
$$
According to \cite{DoerflerFindeisenWienersZiegler:2019}, $\Aup$ is then given as
$$\Aup y =  A_{\norvec_K}^{-} y  $$
with
\begin{align} A_\norvec^{-} y = \lambda_3 \frac{w_3 \cdot M y}{w_3 \cdot M w_3} M w_3 
	&= \frac{-c}{\alpha^{-1} + \varepsilon c_0^2} \begin{pmatrix}
		1 \\ - \varepsilon c_0 \norvec
	\end{pmatrix} (y_1 - c_0 \varepsilon \norvec \cdot (y_2,y_3)) \\
	&= \frac{-1}{\alpha^{-1} c_0^{-1} + \varepsilon c_0} \begin{pmatrix}
		1 \\ - \varepsilon c_0 \norvec
	\end{pmatrix} (y_1 - c_0 \varepsilon \norvec \cdot (y_2,y_3)) \\
	&= \frac{-1}{2 \sqrt{\alpha^{-1} \varepsilon}} \begin{pmatrix}
		1 \\ - \epsilon c_0 \norvec
	\end{pmatrix} (y_1 - c_0 \varepsilon \norvec \cdot (y_2,y_3)).
\end{align}
Therefore, writing $Z_0=\sqrt{\alpha^{-1} \varepsilon}$ for the acoustic impedance, we get
$$ \Aup \begin{pmatrix}
	\jump{p}_{F,K} \\ \jump{v}_{F,K}
\end{pmatrix} = 
- \frac{1}{2 Z_0} ( \jump{p}_{F,K} - Z_0 \jump{v}_{F,K} \cdot \norvec_K) \begin{pmatrix}
	1 \\ - Z_0 \norvec_K
\end{pmatrix} .$$
This yields the matrix
\[
\Aup = \half \begin{pmatrix}
	- \frac{1}{Z_0} & \norvec_K^\transpose \\
	\norvec_K & - Z_0 \norvec_K \norvec_K^\transpose 
\end{pmatrix}
= \AD + \half A_{\norvec_K} \text{ with } \AD = \half \begin{pmatrix}
	- \frac{1}{Z_0} & 0 \\
	0 & - Z_0 \norvec_K \norvec_K^\transpose 
\end{pmatrix} ,
\]
as in \eqref{eq:def_upwindflux}.
%\subsubsection*{Proof of coersivity of $a_h$}
\begin{lemma}
	We have for all $u_h \in Z_h$ 
	\[
	a_h(u_h,u_h) = \half \sum_{\element \in \allelements} \left(  \sum_{\face \in \faceset_\element} \| \sqrt{-\AD} \jump{u_h}_{F,K} \|_F^2 \right). \]
\end{lemma}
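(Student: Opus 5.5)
We need to prove $a_h(u_h,u_h) = \frac12\sum_K\sum_F \|\sqrt{-A_D}[u_h]_{F,K}\|_F^2$, pointwise in $t$ and then integrated in time; the time integral is implicit in $a_h$. The plan is to fix $t$ and combine three ingredients: the splitting $\Aup=\AD+\half A_{\norvec_K}$, an elementwise integration by parts of the volume term, and a face-by-face pairing using symmetry of $A_{\norvec_K}$ together with $A_{-\norvec}=-A_{\norvec}$.

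\emph{Step 1 (volume term).} Since $\A u = A_1\dx1 u+\dots+A_d\dx d u$ with constant symmetric $A_i$, integration by parts on each $K$ gives
\[
(\A u_h,u_h)_K=\half\,(A_{\norvec_K}u_{h,K},u_{h,K})_{\partial K}.
\]

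\emph{Step 2 (interior faces).} Write $u_K$ and $u_{K_F}$ for the two traces, so that $[u_h]_{F,K}=u_{K_F}-u_K$. The face contribution from $K$ is
\[
\half(A_{\norvec_K}u_K,u_K)_F+(\AD[u_h]_{F,K},u_K)_F+\half(A_{\norvec_K}(u_{K_F}-u_K),u_K)_F
=(\AD[u_h]_{F,K},u_K)_F+\half(A_{\norvec_K}u_{K_F},u_K)_F .
\]
Add the analogous contribution from $K_F$. There $\norvec_{K_F}=-\norvec_K$ and $A_D$ is even in $\norvec$ (it depends only on $\norvec\norvec^\transpose$). By symmetry of $A_{\norvec_K}$, the two $\half A_{\norvec}$ cross terms cancel. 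The $A_D$ terms combine to
\[
(\AD[u_h]_{F,K},u_K-u_{K_F})_F=-(\AD[u_h]_{F,K},[u_h]_{F,K})_F .
\]
This equals $\|\sqrt{-\AD}[u_h]_{F,K}\|_F^2$, which is nonnegative because $-\AD$ is symmetric positive semidefinite; its entries are $\frac1{2Z_0}$ and $\frac{Z_0}{2}\norvec\norvec^\transpose$. The double sum in the statement counts each interior face twice, once from each side, and $[u_h]_{F,K_F}=-[u_h]_{F,K}$. Hence the pair's total equals $\half\sum$ over both orientations, which matches the factor $\half$.

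\emph{Step 3 (boundary faces).} This is the main obstacle, because the $\half$ weighting must come out exactly with the convention \eqref{eq:def_jumpboundary}.

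For a Dirichlet face, $[u_h]=-u$, so the face contribution is
\[
\half(A_\norvec u,u)-(\AD u,u)-\half(A_\norvec u,u)=-(\AD u,u)=\|\sqrt{-\AD}u\|_F^2 .
\]
This is twice $\half\|\sqrt{-\AD}[u_h]\|^2$. So I expect to need either reinterpreting the boundary face as counted twice, or noting that the statement's sum counts it once. I would check the coefficients carefully and, if needed, record the boundary terms with weight $1$ (still nonnegative, which is all that \eqref{eq:upwindflux_assump} requires).

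For a Neumann face, $[u_h]=-2(0,v)^\transpose$. Using $\half(A_\norvec u,u)=p\,v\cdot\norvec$, I would compute
\[
-(\Aup\,2(0,v),u)=-(pv\cdot\norvec)+Z_0(v\cdot\norvec)^2 .
\]
The $p$-terms cancel, leaving $Z_0(v\cdot\norvec)^2=\half\|\sqrt{-\AD}[u_h]\|^2$, since $-\AD$ applied to $(0,2v)$ gives $\frac{Z_0}{2}\cdot4(v\cdot\norvec)^2$. So the Neumann faces match the statement exactly.

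Finally, sum over faces and integrate in time.
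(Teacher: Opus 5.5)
Your Steps 1 and 2 and your Neumann computation are the paper's proof, and they are correct. The volume identity is \eqref{eq:A_skew}. Your face-by-face pairing is what \eqref{eq:A_inner} does by reindexing the double sum: the $\half A_{\norvec}$ cross terms cancel because $A_{\norvec}$ is symmetric and $\norvec_{K_F}=-\norvec_K$, and $\AD$ is even in $\norvec$. Your Neumann computation matches \eqref{eq:A_neu}.

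Your Dirichlet computation is also correct. The discrepancy you found is real, so drop the idea of ``reinterpreting the boundary face as counted twice''; no reading of the sum makes the printed identity true. A Dirichlet face contributes $-(\AD u_h,u_h)_F=\|\sqrt{-\AD}\jump{u_h}_{F,K}\|_F^2$ with weight $1$, not $\half$.

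The paper's own \eqref{eq:A_dir} derives exactly your weight-one identity, since its left-hand side has no factor $\half$. But \eqref{eq:A_coersiv} then places the Dirichlet terms under the global $\half$, and that is where the paper slips.

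A concrete counterexample is $u_h\equiv(1,0,\dots,0)^\top\in Z_h$. It has no interior jumps and a vanishing Neumann jump. It gives $a_h(u_h,u_h)=\frac{T|\dir|}{2Z_0}$, whereas the printed right-hand side, integrated in time, is $\frac{T|\dir|}{4Z_0}$.

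So commit to the corrected statement
\[
a_h(u_h,u_h)=\int_0^T\Big(\half\sum_{K\in\allelements}\sum_{F\in\faceset_K,\,F\not\subset\dir}\|\sqrt{-\AD}\jump{u_h}_{F,K}\|_F^2+\sum_{F\in\alldirfaces_h}\|\sqrt{-\AD}u_h\|_F^2\Big)\wrtdt .
\]
This is harmless for \eqref{eq:upwindflux_assump}. The corrected right-hand side lies between $\half\int_0^T\sum_{K,F}\|\sqrt{-\AD}\jump{u_h}_{F,K}\|_F^2\wrtdt$ and twice that quantity. Hence the lower constant is unaffected and $C_\A$ at most doubles.
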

\begin{proof}
	%Proof that this choice satisfies \eqref{eq:upwindflux_assump} ...
	By partial integration we have for all $\element \in \allelements$
	\[ ( \A \numpv, \numpvtest)_{\element} + (\numpv, \A \numpvtest)_\element = (A_{\norvec_\element} \numpv, \numpvtest )_{\partial \element},  \]
	which corresponds to the skew-adjointness of $\A$. Setting $z_h = u_h$ yields
	\[ \label{eq:A_skew}
	( \A \numpv, \numpv)_{\element} = \half (A_{\norvec_\element} \numpv, \numpv )_{\partial \element} = \half \sum_{F \in \faceset_K} (A_{\norvec_\element} u_{h,K}, u_{h,K} )_{F} = \sum_{F \in \faceset_K} ( p_{h,K}, v_{h,K} \cdot \norvec_K )_{F}.
	\]
	For inner faces we have
	\[ \label{eq:A_inner}
	\begin{split}
	  \half \sum_{\element \in \allelements} &
          \sum_{\face \in \faceset_\element \cap \Omega} \| \sqrt{-\AD} \jump{u_h}_{F,K} \|_{L^2(F)}^2  
		\\
		&=  \sum_{\element \in \allelements} \sum_{\face \in \faceset_\element \cap \Omega} \half (  -\AD \jump{u_h}_{F,K}, \jump{u_h}_{F,K} )_F 
		\\
		&=  \sum_{\element \in \allelements} \sum_{\face \in \faceset_\element \cap \Omega} ( \half \AD \jump{u_h}_{F,K},  u_{h,K} - u_{h,K_F}  )_F  
		\\
		&=  \sum_{\element \in \allelements}\sum_{\face \in \faceset_\element \cap \Omega} ( \half \AD \jump{u_h}_{F,K},  u_{h,K} - u_{h,K_F}  )_F +  (\half \AD \jump{u_h}, u_{h,K})_F - (\half \AD \jump{u_h}, u_{h,K})_F 
		\\
		&=  \sum_{\element \in \allelements} \sum_{\face \in \faceset_\element \cap \Omega} ( \AD \jump{u_h}_{F,K},  u_{h,K}  )_F - ( \half \AD \jump{u_h}_{F,K}, u_{h,K_F}  )_F - (\half \AD \jump{u_h}, u_{h,K})_F 
		\\
		&=  \sum_{\element \in \allelements} \sum_{\face \in \faceset_\element \cap \Omega} ( \AD \jump{u_h}_{F,K},  u_{h,K}  )_F
		\\
		&= \sum_{\element \in \allelements}  \sum_{\face \in \faceset_\element \cap \Omega}( \AD \jump{u_h}_{F,K},  u_{h,K}  )_F + \half (A_{\norvec_K} \jump{u_h}_{F,K}, u_{h,K} ) -  \half (A_{\norvec_K} \jump{u_h}_{F,K}, u_{h,K} )	
		\\
		&= \sum_{\element \in \allelements}  \sum_{\face \in \faceset_\element \cap \Omega} ( \Aup \jump{u_h}_{F,K},  u_{h,K}  )_F -  \half (A_{\norvec_K} \jump{u_h}_{F,K}, u_{h,K} )	
		\\
		&= \sum_{\element \in \allelements}  \sum_{\face \in \faceset_\element \cap \Omega} ( \Aup \jump{u_h}_{F,K},  u_{h,K}  )_F +  \half (A_{\norvec_K} u_{h,K}, u_{h,K} )_F -  \half ( A_{\norvec_K} u_{h,K_F}, u_{h,K})_F 
		\\
		&= \sum_{\element \in \allelements} \sum_{\face \in \faceset_\element \cap \Omega} ( \Aup \jump{u_h}_{F,K},  u_{h,K}  )_F +  \half (A_{\norvec_K} u_{h,K}, u_{h,K} )_F.
	\end{split}
	\]
	From definition \eqref{eq:def_jumpboundary}, we have in the Dirichet case
	\[ \label{eq:A_dir}
	\begin{split}
	  \sum_{\element \in \allelements} &
          \sum_{\face \in \faceset_\element \cap \dir} \| \sqrt{-\AD} \jump{u_h}_{F,K} \|_{L^2(F)}^2  
		\\
		&=  \sum_{\element \in \allelements} \sum_{\face \in \faceset_\element \cap \dir} (  -\AD \jump{u_h}_{F,K}, \jump{u_h}_{F,K} )_F %+ \frac{1}{8} ( \A_{\norvec_K} \jump{u_h}_{F,K}, \jump{u_h}_{F,K} )_F
		\\
		&=  \sum_{\element \in \allelements} \sum_{\face \in \faceset_\element \cap \dir} ( - \AD u_{h,F}, u_{h,F} )_F 
		\\
		&=  \sum_{\element \in \allelements} \sum_{\face \in \faceset_\element \cap \dir} ( - \AD u_{h,F}, u_{h,F} )_F - \half ( \A_{\norvec_K} u_{h,F}, u_{h,F} )_F + \half ( \A_{\norvec_K} u_{h,F}, u_{h,F} )_F
		\\
		&= \sum_{\element \in \allelements}  \sum_{\face \in \faceset_\element \cap \dir} ( \Aup \jump{u_h}_{F,K},  u_{h,K}  )_F + \half ( \A_{\norvec_K} u_{h,F}, u_{h,F} )_F
	\end{split}
	\]
	and in the Neumann case
	\[ \label{eq:A_neu}
	\begin{split} 
	  \half \sum_{\element \in \allelements}
          &
          \sum_{\face \in \faceset_\element \cap \neu} \| \sqrt{-\AD} \jump{u_h}_{F,K} \|_{L^2(F)}^2  
		\\
		&=  \sum_{\element \in \allelements} \sum_{\face \in \faceset_\element \cap \neu} \half (  -\AD \jump{u_h}_{F,K}, \jump{u_h}_{F,K} )_F %+ \frac{1}{8} ( \A_{\norvec_K} \jump{u_h}_{F,K}, \jump{u_h}_{F,K} )_F
		\\
		&=  \sum_{\element \in \allelements} \sum_{\face \in \faceset_\element \cap \neu} ( - 2 \AD (0,v_{h,F} )^\transpose, u_{h,F} )_F
		\\
		&=  \sum_{\element \in \allelements} \sum_{\face \in \faceset_\element \cap \neu} ( - 2 \AD (0,v_{h,F} )^\transpose, u_{h,F} )_F - ( \A_{\norvec_K} (0,v_{h,F} )^\transpose, u_{h,F} )_F + \half ( \A_{\norvec_K} u_{h,F}, u_{h,F} )_F
		\\
		&= \sum_{\element \in \allelements}  \sum_{\face \in \faceset_\element \cap \neu} ( \Aup \jump{u_h}_{F,K},  u_{h,K}  )_F + \half ( \A_{\norvec_K} u_{h,F}, u_{h,F} )_F .
	\end{split}
	\]
	In total, combing \eqref{eq:A_inner}, \eqref{eq:A_dir}, \eqref{eq:A_neu}, and using skew-adjointness of $\A$ \eqref{eq:A_skew}, we obtain
	\[ \label{eq:A_coersiv}
	\begin{split}
		&\half \sum_{\element \in \allelements} \left(  \sum_{\face \in \faceset_\element} \| \sqrt{-\AD} \jump{u_h}_{F,K} \|_F^2 \right) \\
		& =  \sum_{\element \in \allelements} \left(  \sum_{\face \in \faceset_\element} ( \Aup \jump{u_h}_{F,K},  u_{h,K}  )_F + \half ( \A_{\norvec_K} u_{h,F}, u_{h,F} )_F \right)
		\\
		&= \sum_{\element \in \allelements} \left(  (\A u_h, u_h)_K + \sum_{\face \in \faceset_\element} ( \Aup \jump{u_h}_{F,K},  u_{h,K}  )_F \right)
		=a_h(u_h,u_h) .
	\end{split}
	\]
\end{proof}
To obtain an estimate as \eqref{eq:upwindflux_assump}, we note that
\[
\sqrt{-\AD} = \frac{1}{\sqrt{2}}
\begin{pmatrix}
	\frac{1}{\sqrt{Z_0}} & 0 \\
	0 & \sqrt{Z_0} \norvec_K \norvec_K^\transpose 
\end{pmatrix},
\]
since $\sqrt{\norvec_K \norvec_K^\transpose} = \norvec_K \norvec_K^\transpose$. Thus,
\[
\| - \sqrt{\AD} u \|^2_{L^2(F)} = \frac{1}{2 Z_0} \| p \|^2_{L^2(\face)} + \frac{Z_0 }{2}\| v \cdot n  \|_{L^2(\face)}^2 
\]
together with
\[
\| \A_{\norvec_K} u \|^2_{L^2(F)} = \| p \|^2_{L^2(\face)} + \| v \cdot n  \|_{L^2(\face)}^2 
\]
 gives us
\[
2 \min \{\frac{1}{ Z_0}, Z_0\} \| \A_{\norvec_K} u \|^2_{L^2(F)} \leq \|  \sqrt{- \AD} u \|^2_{L^2(F)} \leq \half \max\{\frac{1}{ Z_0}, Z_0\} \| \A_{\norvec_K} u \|^2_{L^2(F)} .
\]
Therefore, due to equation \eqref{eq:A_coersiv}, we see that estimate \eqref{eq:upwindflux_assump} holds with
$c_A = 4 \min \{\frac{1}{ Z_0}, Z_0\}$  and $C_A = \frac{1}{4} \max\{\frac{1}{ Z_0}, Z_0\} $. 
%\subsubsection{Symmetry}
\begin{lemma} \label{le:upwind_symetric}
	For all $u_h \in Z_h + U$ and $z_h \in Z_h$ we have
	\[
	\sum_{\element \in \allelements} \sum_{\face \in \faceset_\element} (\Aup \jump{u_h}_{F,K}, z_{h,K})_F = \sum_{\element \in \allelements} \sum_{\face \in \faceset_\element} (u_{h,K}, \Aup \jump{z_h}_{F,K})_F.
	\]
\end{lemma}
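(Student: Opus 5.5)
The plan is to split the upwind matrix as $\Aup = \AD + \half A_{\norvec_K}$, as in the derivation preceding this lemma, and to treat the two parts separately face by face. The diffusive part $\AD$ depends on $\norvec_K$ only through $\norvec_K\norvec_K^\transpose$. It is therefore invariant under $\norvec_K \mapsto -\norvec_K$, and it is a symmetric matrix. The skew part $A_{\norvec_K}$ is symmetric as a matrix but changes sign with the normal, so $A_{\norvec_{K_F}} = -A_{\norvec_K}$ on an interior face $F=\partial K\cap\partial K_F$.

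\textbf{Step 1 (interior faces, $\AD$-part).} Fix $F\in\allintfaces$ and collect the two contributions, one from $K$ and one from $K_F$. Using $\jump{u_h}_{F,K_F} = -\jump{u_h}_{F,K}$, they combine to
\[
(\AD\jump{u_h}_{F,K}, z_{h,K}-z_{h,K_F})_F = -(\AD\jump{u_h}_{F,K},\jump{z_h}_{F,K})_F .
\]
This expression is symmetric in $(u_h,z_h)$ because $\AD$ is symmetric. Hence the $\AD$-contributions on the left- and right-hand sides of the claimed identity coincide face by face.

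\textbf{Step 2 (interior faces, skew part).} The same pairing, now using $A_{\norvec_{K_F}}=-A_{\norvec_K}$, gives on the left the term $\half(A_{\norvec_K}\jump{u_h}_{F,K}, z_{h,K}+z_{h,K_F})_F$. On the right it gives $\half(u_{h,K}+u_{h,K_F}, A_{\norvec_K}\jump{z_h}_{F,K})_F$. Expanding the jumps and using symmetry of $A_{\norvec_K}$, the difference between the two reduces to
\[
(A_{\norvec_K}u_{h,K_F},z_{h,K})_F-(A_{\norvec_K}u_{h,K},z_{h,K_F})_F .
\]
I would sum these differences over all interior faces and rewrite them as element boundary integrals $\sum_K (A_{\norvec_K}\,\cdot,\cdot)_{\partial K}$. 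The aim is then to show that they cancel against the corresponding boundary contributions, using the elementwise integration by parts identity $(\A u,z)_K+(u,\A z)_K=(A_{\norvec_K}u,z)_{\partial K}$ from \eqref{eq:A_skew}. For $u_h\in U$ the jumps of $u_h$ vanish on interior faces, so in that case only the traces of $z_h$ need to be controlled.

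\textbf{Step 3 (boundary faces).} On $F\subset\dir$ the convention \eqref{eq:def_jumpboundary} gives $\jump{u_h}_{F,K}=-u_{h,K}$. The left-hand side then becomes $-(\Aup u_{h,K},z_{h,K})_F$ and the right-hand side becomes $-(u_{h,K},\Aup z_{h,K})_F$. These agree because $\Aup$ is a symmetric matrix, as can be read off directly from \eqref{eq:def_upwindflux}. On $F\subset\neu$ the jump is $-2(0,v_h)^\transpose$. I would compute both sides explicitly from the block form of $\Aup$ and check equality of the two mixed terms $\half(\norvec_K\cdot v_h)\,q_h$ and $\half(\norvec_K\cdot w_h)\,p_h$ after combining them with the $\AD$ terms.

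\textbf{Main obstacle.} I expect Step 2 to be the crux. The face-local difference computed there does not vanish pointwise. The identity therefore hinges on the global cancellation of $\sum_F[(A_{\norvec_K}u_{h,K_F},z_{h,K})_F-(A_{\norvec_K}u_{h,K},z_{h,K_F})_F]$, together with the Neumann boundary terms, across the skeleton. I would first try to establish this cancellation via \eqref{eq:A_skew} and the single-valuedness of the normal flux $A_{\norvec}$ applied to traces. If it cannot be closed for general discontinuous $u_h$, this is where the argument would have to be restricted to the situation in which the lemma is actually used, namely $u_h=\pi_h u-u$ in the proof of Lemma \ref{le:convergence_1}.
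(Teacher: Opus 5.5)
Your Step 1, the Dirichlet half of Step 3, and the face-local defect you compute in Step 2 are correct. The global cancellation you then hope for does not happen, and no argument can supply it, because the identity is false as stated. Count each interior face once; the expression below does not depend on which neighbour is called $K$. Writing $u_h=(p_h,v_h)^\transpose$ and $z_h=(q_h,w_h)^\transpose$, the left-hand side minus the right-hand side equals
\[
\sum_{F\in\allintfaces}\Bigl((A_{\norvec_K}u_{h,K_F},z_{h,K})_F-(A_{\norvec_K}u_{h,K},z_{h,K_F})_F\Bigr)
+\sum_{F\in\allneufaces_h}\Bigl((p_h,w_h\cdot\norvec_K)_F-(q_h,v_h\cdot\norvec_K)_F\Bigr).
\]
For a concrete counterexample, let $K,K_F$ share a flat face $F$. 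Put $u_h=(1,0)^\transpose$ on $K_F$ and $z_h=(0,\norvec_K^\transpose)^\transpose$ on $K$, both zero elsewhere. Every other face contributes nothing, the left-hand side equals $\tfrac12|F|$, and the right-hand side equals $-\tfrac12|F|$.

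The restrictions you consider do not rescue the identity. For $u\equiv(1,0)^\transpose\in U$ and $z_h=(0,w_h^\transpose)^\transpose$ supported in one interior cell $K$, the defect is $\int_K\divv w_h\,\mathrm dx$. For $u_h=\pi_hu-u$ with $z_h$ supported in one interior cell $K$, the defect is $\sum_{F\in\faceset_K}(A_{\norvec_K}(\pi_hu-u)_{K_F},z_{h,K})_F$. This involves the neighbours' traces of the projection error and has no reason to vanish. Your Neumann check fails as well, by the second sum above. Finally, \eqref{eq:A_skew} cannot close the gap: it trades face terms for the volume terms $(\A u,z)_K+(u,\A z)_K$, which do not occur in a face-only identity.

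The paper's proof breaks at exactly the point you flagged. After writing the cross terms as $\mp(\sqrt{-\Aup}\,\cdot,\sqrt{-\Aup}\,\cdot)_F$, it drops them by relabelling $K\leftrightarrow K_F$. That tacitly assumes $A^{\textrm{up}}_{\norvec_{K_F}}=A^{\textrm{up}}_{\norvec_K}$, but only the $\AD$ part is invariant under $\norvec_K\mapsto-\norvec_K$, and the leftover is precisely your residual. Its appeal to symmetry on boundary faces is likewise valid on $\dir$ but not on $\neu$.

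What is true, and suffices for the estimate of $a_h(\pi_hu-u,z_h)$ in Lemma~\ref{le:convergence_1}, is an adjoint representation of the whole form. It is obtained by integrating the volume term by parts with \eqref{eq:A_skew}: for $w\in Z_h+U$ and $z_h\in Z_h$,
\[
a_h(w,z_h)=\int_0^T\sum_{K\in\allelements}\Bigl(-(w,\A z_h)_K+\sum_{F\in\faceset_K}(w_K,A^{\textrm{up}}_{-\norvec_K}\jump{z_h}_{F,K})_F\Bigr)\wrtdt .
\]
So the skew part of the flux must pass through the volume term, and the adjoint flux is the downwind one. Since $|A^{\textrm{up}}_{-\norvec_K}y|\le C|A_{\norvec_K}y|$, this gives
\[
|a_h(w,z_h)|\le\int_0^T\bigl(\|w\|_{L^2(\Omega_h)}\|\A z_h\|_{L^2(\Omega_h)}+C\|w\|_{L^2(\partial\Omega_h)}\|A_\norvec\jump{z_h}\|_{L^2(\partial\Omega_h)}\bigr)\wrtdt ,
\]
which, together with $\|\A z_h\|_{L^2(\Omega_h)}\le C\disnorm{z_h}_h$, is all the convergence proof needs.
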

\begin{proof}
	First of all, we note that $-\Aup$ has eigenpairs
	\[ 0,
	\begin{pmatrix}
		Z_0 n_1 \\
		1 \\
		0
	\end{pmatrix} \quad
	0, 
	\begin{pmatrix}
		Z_0 n_2 \\
		0 \\
		1
	\end{pmatrix} \quad
	\half \left( Z_0 + \frac{1}{Z_0} \right),
	\begin{pmatrix}
		-\frac{1}{Z_0} \\
		n_1 \\
		n_2
	\end{pmatrix} .
	\] 
	Therefore, it is possible to choose a square root of $-\Aup$.
	For inner faces, due to the symmetry of $\Aup$, we have
	\[
	\begin{split}
	  \sum_{\element \in \allelements} &\sum_{\face \in \faceset_\element \cap \Omega} (\Aup \jump{u_h}_{F,K}, z_{h,K})_F \\
		&= 	\sum_{\element \in \allelements} \sum_{\face \in \faceset_\element \cap \Omega} ( \jump{u_h}_{F,K}, \Aup z_{h,K})_F + (u_{h,K}, \Aup {z_{h,K_F}})_F - (u_{h,K}, \Aup {z_{h,K_F}})_F
		\\
		&= 	\sum_{\element \in \allelements} \sum_{\face \in \faceset_\element \cap \Omega} ( u_{h,K},   \Aup \jump{z_h}_{F,K})_F + (u_{h,K_F}, \Aup {z_{h,K}})_F - (u_{h,K}, \Aup {z_{h,K_F}})_F
		\\
		&= 	\sum_{\element \in \allelements} \sum_{\face \in \faceset_\element \cap \Omega} ( u_{h,K},   \Aup \jump{z_h}_{F,K})_F - (u_{h,K_F}, - \Aup {z_{h,K}})_F + (u_{h,K}, - \Aup {z_{h,K_F}})_F \\
		&= 	\sum_{\element \in \allelements} \sum_{\face \in \faceset_\element \cap \Omega} ( u_{h,K},   \Aup \jump{z_h}_{F,K})_F - ( \sqrt{ - \Aup } u_{h,K_F},  \sqrt{ - \Aup } {z_{h,K}})_F 
		\\
                &\qquad\qquad\qquad\qquad\qquad\qquad\qquad\quad\, 
                + ( \sqrt{ - \Aup } u_{h,K}, \sqrt{ - \Aup } {z_{h,K_F}})_F \\
		&= 	\sum_{\element \in \allelements} \sum_{\face \in \faceset_\element \cap \Omega} ( u_{h,K},   \Aup \jump{z_h}_{F,K})_F .
	\end{split}
	\]
	For boundary faces, one can use the symmetry of $\Aup$.
\end{proof}
\end{document}